\pgfplotsset{compat=1.16}
\setlist{font=\normalfont,topsep=1ex,parsep=0ex}
\patchcmd{\@algocf@start}{\addtolength{\hsize}{-1.5em}}{}{}{}
\renewenvironment{algomathdisplay}
 {\[}
 {\@endalgocfline\vspace{-\baselineskip}\]\hfill\strut\par}
\newcommand\ccell[1]{\multicolumn{1}{c}{#1}} 
\theoremstyle{plain}
\newtheorem{proposition}{Proposition}[section]
\newtheorem{lemma}[proposition]{Lemma}
\newtheorem{theorem}[proposition]{Theorem}
\newtheorem{corollary}[proposition]{Corollary}
\theoremstyle{definition}
\newtheorem{definition}[proposition]{Definition}
\newtheorem{example}[proposition]{Example}
\newtheorem{setting}[proposition]{Setting}
\theoremstyle{remark}
\newtheorem{remark}[proposition]{Remark}
\newcommand{\R}{\mathbb{R}}
\newcommand{\N}{\mathbb{N}}
\newcommand{\W}{\mathbb{W}}
\newcommand{\cone}{\operatorname{cone}}
\newcommand{\Span}{\operatorname{span}}
\newcommand{\rank}{\operatorname{rank}}
\newcommand{\epi}{\operatorname{epi}}
\newcommand{\trace}{\operatorname{trace}}
\newcommand{\diag}{\operatorname{diag}}
\newcommand{\shift}{r}
\setlist[enumerate]{label=(\alph*)}
\DeclareMathOperator*{\argmin}{\operatorname{argmin}}
\numberwithin{equation}{section}
\numberwithin{table}{section}    
\numberwithin{figure}{section}
\crefname{figure}{Figure}{Figures}
\crefname{table}{Table}{Tables}
\let\eqref\labelcref
\DeclarePairedDelimiter\abs{\lvert}{\rvert}
\DeclarePairedDelimiter\norm{\lVert}{\rVert}
\providecommand\given{\nonscript\;\delimsize|\nonscript\;}
\DeclarePairedDelimiterX\set[1]\{\}{#1}
\DeclarePairedDelimiterX\innerp[2]{\langle}{\rangle}{#1,#2}
\DeclarePairedDelimiter\parens()
\DeclarePairedDelimiter\bracks[]
\definecolor{todocolor}{rgb}{1.0,0.0,0.0}
\proof\endcsname{\itshape}{\bfseries}{}{}
\newcommand{\mscLink}[1]{\href{https://www.ams.org/mathscinet/msc/msc2020.html?t=#1}{#1}}
\newif\ifpreprint
\title{
	An Augmented Lagrangian Method for Optimization Problems with
	Structured Geometric Constraints%
		\thanks{This research was 
			supported by the German Research Foundation (DFG) within the priority 
			program ``Non-smooth and Complementarity-based Distributed Parameter Systems: 
			Simulation and Hierarchical Optimization'' 
			(SPP 1962) under grant numbers KA 1296/24-2 and WA 3636/4-2.
			}
	}
\author{Xiaoxi Jia\footnotemark[2]
	\hspace*{-4mm} \and \hspace*{-4mm}
	Christian Kanzow\thanks{%
		University of Würzburg,
		Institute of Mathematics,
		97074 Würzburg,
		Germany,
		email: \{xiaoxi.jia,kanzow\}@mathematik.uni-wuerzburg.de,
	}
	\hspace*{-4mm} \and \hspace*{-4mm}
	Patrick Mehlitz\footnotemark[4]
	\thanks{%
		University of Mannheim,
		School of Business Informatics and Mathematics,
		68159 Mannheim,
		Germany
	} 
	\hspace*{-4mm} \and \hspace*{-4mm}
	Gerd Wachsmuth\thanks{%
		Brandenburgische Technische Universität Cottbus-Senftenberg,
		Institute of Mathematics,
		03046 Cottbus,
		Germany,
		email: \{mehlitz,wachsmuth\}@b-tu.de.
	} 
}
\date{\today}
\begin{document}

\maketitle
{
	\small\textbf{\abstractname.}
		This paper is devoted to the theoretical and numerical investigation of an
		augmented Lagrangian method for the solution of optimization
		problems with geometric constraints. Specifically, we study
		situations where parts of the constraints are nonconvex and possibly
        complicated, but allow for a fast computation of
		projections onto this nonconvex set. Typical problem classes which satisfy this requirement are
		optimization problems with disjunctive constraints (like complementarity or
		cardinality constraints) as well as optimization problems
		over sets of matrices which have to satisfy additional rank constraints.
		The key idea behind our method is to keep these complicated
		constraints explicitly in the constraints and to penalize only
		the remaining constraints by an augmented Lagrangian function.
		The resulting subproblems are then solved with the aid
        of a problem-tailored nonmonotone projected gradient method. 
        The corresponding convergence theory allows for an inexact solution 
        of these subproblems. Nevertheless, the overall algorithm computes so-called 
        Mordukhovich-stationary points of the original problem under a mild 
        asymptotic regularity condition, which is generally weaker than 
        most of the respective available problem-tailored constraint qualifications.
		Extensive numerical experiments addressing complementarity- and cardinality-constrained
		optimization problems as well as a semidefinite reformulation of MAXCUT problems
		visualize the power of our approach.
	\par\addvspace{\baselineskip}
}

{
	\small\textbf{Keywords.}
		Asymptotic Regularity,
		Augmented Lagrangian Method,
		Cardinality Constraints,
		Complementarity Constraints,
		MAXCUT Problem,
		Mordukhovich-Stationarity,
		Nonmonotone Projected Gradient Method
	\par\addvspace{\baselineskip}
}

{
	\small\textbf{AMS subject classifications.}
	\mscLink{49J53}, \mscLink{65K10}, \mscLink{90C22}, \mscLink{90C30}, \mscLink{90C33}
	\par\addvspace{\baselineskip}
}

\section{Introduction}\label{Sec:Intro}

We consider the program
\begin{equation}\label{Eq:GenP}\tag{P}
   \min\limits_w \ f(w) \quad \text{s.t.} \quad G(w) \in C, \ w \in D ,
\end{equation}
where $\mathbb W$ and $\mathbb Y$ are Euclidean spaces, i.e., real and finite-dimensional Hilbert spaces,
$ f\colon \mathbb{W} \to \mathbb{R} $ and $ G\colon \mathbb{W} \to \mathbb{Y} $
are continuously differentiable, $ C \subset \mathbb{Y} $ is nonempty, closed, 
and convex, whereas the set $ D \subset \mathbb{W} $
is only assumed to be nonempty and closed. This setting is very general
and covers, amongst others, standard nonlinear programs, second-order cone
and, more generally, conic optimization problems 
\cite{BenTalNemirovski2001,Chen2019},
as well as several so-called disjunctive programming problems like mathematical programs with complementarity, vanishing, switching, or cardinality constraints, see 
\cite{BenkoCervinkaHoheisel,BenkoGfrerer2018,FlegelKanzowOutrata2007,Mehlitz2019b} 
for an overview and suitable references. Since $\mathbb W$ and $\mathbb Y$ are Euclidean spaces, our model also covers matrix optimization problems like
semidefinite programs or low-rank approximation problems \cite{Markovsky2012}.

The aim of this paper is to apply a (structured) augmented Lagrangian 
technique to \eqref{Eq:GenP} in order to find suitable stationary points.
The augmented Lagrangian or multiplier penalty method is a classical
approach for the solution of nonlinear programs, see \cite{Bertsekas1982}
as a standard reference. The more recent book \cite{BirginMartinez2014} presents
a slightly modified version of this classical augmented Lagrangian method,
which uses a safeguarded update of the Lagrange multipliers and has stronger
global convergence properties. In the meantime, this safeguarded augmented
Lagrangian method has also been applied
to a number of optimization problems with disjunctive constraints, see e.g.\
\cite{AndreaniHaeserSecchinSilva2019,GuoDeng2021,IzmailovSolodovUskov2012,KanzowRaharjaSchwartz2021,Ramos2019}.

Since, to the best of our knowledge, augmented Lagrangian methods have not yet
been applied to the general problem \eqref{Eq:GenP} with general nonconvex $D$
and arbitrary convex sets $ C $ in the setting of Euclidean spaces, and in order to get a
better understanding of our contributions, let us add some comments regarding
the existing results for the probably most prominent non-standard optimization
problem, namely the class of mathematical programs with complementarity
constraints (MPCCs). Due to the particular structure of the feasible set, 
the usual Karush--Kuhn--Tucker (KKT for short) conditions are typically not 
satisfied at a local minimum.
Hence, other (weaker) stationarity concepts have been proposed, like C- (abbreviating Clarke) and 
M- (for Mordukhovich) stationarity, with M-stationarity being the stronger concept. Most algorithms
(regularization, penalty, augmented Lagrangian methods etc.)
for the solution of MPCCs solve a sequence of standard nonlinear programs, and their
limit points are typically C-stationary points only. Some approaches can identify 
M-stationary points if the underlying nonlinear programs are solved exactly,
but they loose this desirable property if these programs are solved only
inexactly, see the discussion in \cite{KanzowSchwartz2015} for more details.

The authors are currently aware of only three approaches where
convergence to M-stationary points for a general (nonlinear) MPCC is shown
using inexact solutions of the corresponding subproblems,
namely \cite{AndreaniSecchinSilva2018,Ramos2019,GuoDeng2021}. All three papers 
deal with suitable modifications of the (safeguarded) augmented Lagrangian 
method. The basic idea of reference \cite{AndreaniSecchinSilva2018} is to 
solve the subproblems such that both a first- and a second-order
necessary optimality condition hold inexactly at each iteration,
i.e., satisfaction of the second-order condition is the central point here
which, obviously, causes some overhead for the subproblem solver and usually
excludes the application of this approach to large-scale problems. The 
paper \cite{Ramos2019} proves convergence to M-stationary points by
solving some complicated subproblems, but for the latter no method is specified. 
Finally, the recent
approach described in \cite{GuoDeng2021} provides an augmented Lagrangian
technique for the solution of MPCCs where the complementarity constraints 
are kept as constraints, whereas the standard constraints are penalized.
The authors present a technique which computes a suitable stationary
point of these subproblems in such a way that the entire method generates
M-stationary accumulation points for the original MPCC.
Let us also mention that \cite{HarderMehlitzWachsmuth2021} suggests to solve (a discontinuous
reformulation of) the M-stationarity system associated with an MPCC by means of
a semismooth Newton-type method. Naturally, this approach should
be robust with respect to (w.r.t.) an inexact solution of the appearing
Newton-type equations although this issue is not discussed in \cite{HarderMehlitzWachsmuth2021}.

The present paper universalizes the idea from \cite{GuoDeng2021} to the
much more general problem \eqref{Eq:GenP}. In fact, a closer look at
the corresponding proofs shows that the technique from \cite{GuoDeng2021}
can be generalized using some relatively small modifications.
This
allows us to concentrate on some additional new contributions. In
particular, we prove convergence to an M-type stationary point of
the general problem \eqref{Eq:GenP} under a very weak sequential
constraint qualification introduced recently in \cite{Mehlitz2020} for the general
setting from \eqref{Eq:GenP}. We further show that this sequential
constraint qualification holds under the conditions for which convergence
to M-stationary points of an MPCC is shown in \cite{GuoDeng2021}. 
Note that this is also the first algorithmic application of the general sequential
stationarity and regularity concepts from \cite{Mehlitz2020}.

The global convergence result for our method holds for the abstract problem
\eqref{Eq:GenP} with geometric constraints without any further assumptions
regarding the sets $ C $ and, in particular, $ D $. Conceptually, we
are therefore able to deal with a very large class of optimization problems.
On the other hand, we use a projected gradient-type method for the solution
of the resulting subproblems. Since this requires projections onto the
(usually nonconvex) set $ D $, our method can be implemented efficiently only 
if $ D $ is simple in the sense that projections onto $ D $ are easy
to compute. For this kind of ``structured'' geometric constraints (this
explains the title of this paper), the entire method is then both an efficient
tool and applicable to large-scale problems. In particular, we show that 
this is the case for MPCCs, optimization problems with cardinality constraints,
and some rank-constrained matrix optimization problems.

The paper is organized as follows. We begin with restating some basic
definitions from variational analysis in \cref{Sec:Prelims}. 
There, we also relate the general regularity concept from \cite{Mehlitz2020}
to the constraint qualification (the so-called relaxed constant positive linear dependence 
condition, RCPLD for short) used in the underlying paper \cite{GuoDeng2021}
(as well as in many other related publications in this area). We then 
present the spectral gradient method for optimization problems over nonconvex
sets in \cref{Sec:ProjGrad}. This method is used to solve the 
resulting subproblems of our augmented Lagrangian method whose details
are given in \cref{Sec:ALM}. Global convergence to M-type
stationary points is also shown in this section. Since, in our augmented 
Lagrangian approach, we penalize the seemingly easy constraints $ G(w) \in C $,
but keep the condition $ w \in D $ explicitly in the constraints, we
have to compute projections onto $ D $. \Cref{Sec:Realizations}
therefore considers a couple of situations where this can be done in a numerically
very efficient way. Extensive computational experiments for some of these 
situations are documented in \cref{Sec:Numerics}. This includes
MPCCs, cardinality-constrained (sparse) optimization problems, and 
a rank-constrained reformulation of the famous MAXCUT problem. We close
with some final remarks in \cref{Sec:Final}.

Notation. The Euclidean inner product of two vectors $x,y\in\R^n$ will be denoted by $x^\top y$.
More generally, $ \innerp{x}{y} $ is used to represent the inner product of
$x,y\in\mathbb W$ whenever $\mathbb W$ is some abstract Euclidean space.
For brevity, we exploit $x+A:=A+x:=\set{x+a \given a\in A}$ for arbitrary vectors $x\in\mathbb W$
and sets $A\subset\mathbb W$.
The sets $\cone A$ and $\Span A$ denote the smallest cone containing the set $A$ and the
smallest subspace containing $A$, respectively.
Whenever $L\colon\mathbb W\to\mathbb Y$ is a linear operator between Euclidean spaces
$\mathbb W$ and $\mathbb Y$, $L^*\colon\mathbb Y\to\mathbb W$ denotes its adjoint.
For some continuously differentiable mapping $\varphi\colon\mathbb W\to\mathbb Y$ and some
point $w\in\mathbb W$, we use $\varphi'(w)\colon\mathbb W\to\mathbb Y$ in order to denote the derivative of $\varphi$
at $w$ which is a continuous linear operator. 
In the particular case $\mathbb Y:=\R$, we set $\nabla\varphi(w):=\varphi'(w)^*1\in\mathbb W$ for brevity.

\section{Preliminaries}\label{Sec:Prelims}

We first recall some basic concepts from variational analysis in 
\cref{sub:VA}, and then introduce and discuss general stationarity and regularity
concepts for the abstract problem \eqref{Eq:GenP} in \cref{sec:asymptotic_stat_reg}.

\subsection{Fundamentals of Variational Analysis}\label{sub:VA}

In this section, we comment on the tools of variational analysis which will be exploited
in order to describe the 
geometry of the closed, convex set $C\subset\mathbb Y$ and the
closed (but not necessarily convex) set $D\subset\mathbb W$ which appear in the formulation
of \eqref{Eq:GenP}.

The Euclidean projection $P_C \colon \mathbb Y \to \mathbb Y$ onto the closed, convex set $ C $ is given by
\begin{equation*}
   P_C (y) := \argmin\limits_{z \in C} \norm{ z - y }.
\end{equation*}
Thus, the corresponding distance function $d_C \colon \mathbb Y \to \R$ can be written as
\begin{equation*}
   d_C (y) := \min_{z \in C} \norm{ z - y } = \norm{ P_C(y) - y }.
\end{equation*}
On the other hand, projections onto the potentially nonconvex set $ D $ still exist,
but are, in general, not unique. Therefore, we define the corresponding (usually set-valued)
projection operator $\Pi_D \colon \mathbb W \rightrightarrows \mathbb W$ by
\begin{equation*}
   \Pi_D (x) := \argmin\limits_{z \in D} \norm{ z - x } \ne \varnothing.
\end{equation*}
Given $ \bar w \in D $, the closed cone
\begin{equation*}
   \mathcal{N}_D^\textup{lim} (\bar w) 
   := 
   \limsup\limits_{w\to \bar w}\bracks[\big]{\cone(w-\Pi_D(w))}
\end{equation*}
is referred to as the limiting normal cone to $D$ at $\bar w$,
see \cite{Mordukhovich2018,RockafellarWets2009} for other representations and
properties of this variational tool.
Above, we used the notion of the outer (or upper) limit of a set-valued mapping at
a certain point, see e.g.\ \cite[Definition~4.1]{RockafellarWets2009}. 
For $w\notin D$, we set $\mathcal N_D^\textup{lim}(w):=\varnothing$.
Note that the limiting normal cone
depends on the inner product of $\mathbb W$
and
is stable in the sense that
\begin{equation}\label{Eq:stability}
   \limsup\limits_{w \to \bar w} \mathcal{N}_D^\textup{lim} (w) 
   =
   \mathcal{N}_D^\textup{lim} (\bar w) 
	 \qquad \forall \bar w \in \mathbb W
\end{equation}
holds. 
This stability property, which might be referred to as outer semicontinuity of the set-valued
operator $\mathcal N_D^\textup{lim}\colon\mathbb W\rightrightarrows\mathbb W$, 
will play an essential role in our subsequent analysis.
The limiting normal cone to the convex set $C$
coincides with the standard normal cone from convex analysis, 
i.e., for $\bar y\in C$, we have
\begin{equation*}
  \mathcal{N}_C^\textup{lim} (\bar y) = \mathcal{N}_C (\bar y) :=
   \set*{ \lambda \in \mathbb Y \given
   	\innerp{\lambda}{y-\bar y} \leq 0 \quad \forall y \in C 
   	}.
\end{equation*}
For points $y\notin C$, we set $\mathcal N_C(y):=\varnothing$ for formal completeness.
Note that the stability property \eqref{Eq:stability}
is also satisfied by the set-valued operator $\mathcal N_C\colon\mathbb Y\rightrightarrows\mathbb Y$.

\subsection{Stationarity and Regularity Concepts}\label{sec:asymptotic_stat_reg}

Noting that the abstract set $D$ is generally nonconvex in the exemplary
settings we have in mind, the so-called concept of Mordukhovich-stationarity, which
exploits limiting normals to $D$, is a reasonable concept of stationarity which addresses
\eqref{Eq:GenP}. 

\begin{definition}\label{Def:MStat}
Let $ \bar w \in \mathbb W $ be feasible for the optimization problem \eqref{Eq:GenP}. Then 
$ \bar w $ is called an \emph{M-stationary point} (Mordukhovich-stationary point) 
of \eqref{Eq:GenP} if there exists a multiplier $ \lambda \in \mathbb{Y} $
such that 
\begin{equation*}
    0 \in \nabla f(\bar w) + G'(\bar w)^* \lambda + \mathcal{N}_D^\textup{lim} (\bar w),
    \quad 
    \lambda \in \mathcal{N}_C ( G(\bar w) ).
\end{equation*}
\end{definition}

Note that this definition coincides with the usual KKT conditions 
of \eqref{Eq:GenP} if the set $ D $ is convex. 
An asymptotic counterpart of this definition is the following one,
see \cite{Mehlitz2020}.

\begin{definition}\label{Def:AMStat}
 Let $ \bar w \in \mathbb W $ be feasible for the optimization problem \eqref{Eq:GenP}. Then 
$ \bar w $ is called an \emph{AM-stationary point} (asymptotically M-stationary point) 
of \eqref{Eq:GenP} if there exist sequences 
$\{ w^k \},\{\varepsilon^k\}\subset\mathbb W$ and $\{\lambda^k\},\{ z^k \} \subset\mathbb Y$ such that
$w^k\to\bar w$, $\varepsilon^k\to 0$, $z^k\to 0$, as well as
\begin{equation*}
   \varepsilon^k \in \nabla f(w^k) + G'(w^k)^* \lambda^k + \mathcal N_D^\textup{lim} (w^k),
   \quad\lambda^k \in \mathcal{N}_C( G(w^k) - z^k)
   \qquad
   \forall k\in\N.
\end{equation*}
\end{definition}

The definition of an AM-stationary point is similar to the notion of an 
AKKT (asymptotic or approximate KKT) point in standard nonlinear programming, 
see \cite{BirginMartinez2014}, but 
requires some explanation: The meanings of the iterates $ w^k $ and the 
Lagrange multiplier estimates $ \lambda^k $ should be clear. The vector
$ \varepsilon^k $ measures the inexactness by which the stationary
conditions are satisfied at $ w^k $ and $ \lambda^k $. The vector $ z^k $
does not occur (at least not explicitly) in the context of standard nonlinear
programs, but is required here for the following reason: The method to
be considered in this paper generates a sequence $ \{ w^k \} $ satisfying
$ w^k \in D $, while the constraint $ G(w) \in C $ gets penalized,
hence, the condition $ G(w^k) \in C $ will typically be violated.
Consequently, the corresponding normal cone $ \mathcal{N}_C ( G(w^k) ) $
would be empty which is why we cannot expect to have $ \lambda^k \in \mathcal{N}_C ( G(w^k) ) $,
though we hope that this holds asymptotically. In order to deal with this situation,
we therefore have to introduce the sequence $ \{ z^k \} $.
Let us note that AM-stationarity corresponds to so-called AKKT stationarity for conic optimization
problems, i.e., where $C$ is a closed, convex cone and $D:=\mathbb W$, see
\cite[Section~5]{AndreaniGomezHaeserMitoRamos2020}.
The more general situation where $C$ and $D$ are closed, convex sets and the overall problem is
stated in arbitrary Banach spaces is investigated in \cite{BoergensKanzowMehlitzWachsmuth2020}.
Asymptotic notions of stationarity addressing situations where $D$ is a nonconvex set
of special type can be found, e.g., 
in \cite{AndreaniHaeserSecchinSilva2019,KanzowRaharjaSchwartz2021b,Ramos2019}.
As shown in \cite{Mehlitz2020}, the overall concept of asymptotic stationarity 
can be further generalized to feasible sets
which are given as the kernel of a set-valued mapping.
Let us mention that the theory in this section is still valid in situations where $C$ is
merely closed. In this case, one may replace the normal cone to $C$ in the sense of convex
analysis by the limiting normal cone everywhere. However, for nonconvex sets $C$,
our algorithmic approach from \cref{Sec:ALM} is not valid anymore.
Note that, for the price of a slack variable $w_\textup s \in \mathbb{Y}$,
we can transfer the given constraint system into
\[
	G(w)-w_\textup s=0,\ (w_\textup s,w)\in C\times D
\]
where the right-hand side of the nonlinear constraint is trivially convex.
In order to apply the algorithmic framework of this paper to this
reformulation, projections onto $C$ have to be computed efficiently.
Moreover, there might be a difference between the asymptotic notions of stationarity
and regularity discussed here when applied to this reformulation or the original 
formulation of the constraints.

Apart from the aforementioned difference, the motivation of AM-stationarity is similar to the 
one of AKKT-stationarity:
Suppose that the sequence $ \{ \lambda^k \} $ is bounded
and, therefore, convergent along a subsequence. Then, taking the limit 
on this subsequence in the definition of an AM-stationary point while using
the stability property \eqref{Eq:stability} of the limiting normal cone shows that 
the corresponding limit point satisfies the M-stationarity conditions from
\cref{Def:MStat}. In general, however,
the Lagrange multiplier estimates $ \{ \lambda^k \} $
in the definition of AM-stationarity
might be unbounded.
Though this boundedness can be guaranteed under suitable (relatively strong)
assumptions, the resulting convergence theory works under significantly weaker
conditions.

It is well known in optimization theory that a local minimizer of \eqref{Eq:GenP}
is M-stationary only under validity of a suitable constraint qualification. In
contrast, it has been pointed out in \cite[Theorem~4.2, Section~5.1]{Mehlitz2020}
that each local minimizer of \eqref{Eq:GenP} is AM-stationary. 
In order to infer that an AM-stationary point is already M-stationary, the presence
of so-called asymptotic regularity is necessary, 
see \cite[Definition~4.4]{Mehlitz2020}.

\begin{definition}\label{Def:AsymptoticRegularity}
	A feasible point $\bar w\in\mathbb{W}$ of \eqref{Eq:GenP} is called \emph{AM-regular}
	(asymptotically Mordukhovich-regular) whenever the condition
	\[
		\limsup\limits_{w\to \bar w,\,z\to 0}\mathcal M(w,z)
		\subset
		\mathcal M(\bar w,0)
	\]
	holds, where $\mathcal M\colon\mathbb{W}\times\mathbb{Y}\rightrightarrows\mathbb{W}$ 
	is the set-valued mapping defined via
	\[
		\mathcal M(w,z)
		:=
		G'(w)^*\mathcal N_C(G(w)-z)+\mathcal N_D^\textup{lim}(w).
	\]
\end{definition}

The concept of AM-regularity has been inspired by the notion of AKKT-regularity (sometimes referred
to as cone continuity property), which became popular as one of the weakest constraint qualifications
for standard nonlinear programs or MPCCs,
see e.g.\ \cite{AndreaniMartinezRamosSilva2018,AndreaniMartinezRamosSilva2016,Ramos2019}, and can be
generalized to a much higher level of abstractness. 
In this regard, we would like to point the reader's attention to the fact that AM-stationarity and
-regularity from \cref{Def:AMStat,Def:AsymptoticRegularity} 
are referred to as \emph{decoupled} asymptotic Mordukhovich-stationarity and -regularity
in \cite{Mehlitz2020} since these are already refinements of more general concepts. 
For the sake of a concise notation, however, we omit the term \emph{decoupled} here.

It has been shown in \cite[Section~5.1]{Mehlitz2020} that validity of AM-regularity at a
feasible point $\bar w\in\mathbb{W}$ of \eqref{Eq:GenP} is implied by
\begin{equation}\label{eq:GMFCQ}
	0\in G'(\bar w)^*\lambda+\mathcal N_D^\textup{lim}(\bar w),\quad
	\lambda\in\mathcal N_C(G(\bar w))
	\quad\Longrightarrow\quad
	\lambda = 0.
\end{equation}
The latter is known as NNAMCQ (no nonzero abnormal multiplier constraint qualification) or
GMFCQ (generalized Mangasarian--Fromovitz constraint qualification) in the literature.
Indeed, in the setting where we fix $C:=\R^{m_1}_-\times\{0\}^{m_2}$ and $D:=\mathbb{W}$, \eqref{eq:GMFCQ}
boils down to the classical Mangasarian--Fromovitz constraint qualification from standard
nonlinear programming. 
The latter choice for $C$ will be of particular interest, which is why we formalize this setting below.

\begin{setting}\label{set:standard_nonlinear_constraints}
	Given $m_1,m_2\in\N$, we set $m:=m_1+m_2$,  
	$\mathbb{Y}:=\R^{m}$, and
	$C:=\R^{m_1}_-\times\{0\}^{m_2}$. 
	No additional assumptions are postulated on the set $D$.
	We denote the component functions
	of $G$ by $G_1,\ldots,G_m\colon\mathbb{W}\to\R$. Thus, the constraint $G(w)\in C$ encodes
	the constraint system
	\[
		G_i(w) \leq 0\quad i=1,\ldots,m_1,\qquad
		G_i(w) =0\quad i=m_1+1,\ldots,m
	\]
	of standard nonlinear programming. For our analysis, we exploit the index sets
	\[
		I(\bar w):=\set{i\in\{1,\ldots,m_1\} \given G_i(\bar w)=0},
		\qquad
		J:=\{m_1+1,\ldots,m\},
	\]
	whenever $\bar w\in D$ satisfies $G(\bar w)\in C$ in the present situation.
\end{setting}

Let us emphasize that we did not make any assumptions regarding the structure of the set
$D$ in \cref{set:standard_nonlinear_constraints}.
Thus, it still covers numerous interesting
problem classes like complementarity-, vanishing-, or 
switching-constrained programs.
These so-called disjunctive programs of special type are addressed in the setting 
mentioned below which provides
a refinement of \cref{set:standard_nonlinear_constraints}.
\begin{setting}\label{set:disjunctive_programs_with_bi_disjunction}
	Let $\mathbb{X}$ be another Euclidean space, let $X\subset\mathbb X$ be the union
	of finitely many convex, polyhedral sets, 
	and let $T\subset\R^2$ be the union of two polyhedrons
	$T_1,T_2\subset\R^2$. 
	For functions $g\colon\mathbb{X}\to\R^{m_1}$, $h\colon\mathbb{X}\to\R^{m_2}$, and
	$p,q\colon\mathbb{X}\to\R^{m_3}$, we consider the constraint system given by
	\[
		\begin{aligned}
			g_i(x)&\leq 0&\quad&i=1,\ldots,m_1,\\
			h_i(x)&=0&&i=1,\ldots,m_2,\\
			\parens[\big]{ p_i(x),q_i(x) }&\in T&&i=1,\ldots,m_3,\\
			x&\in X.&&
		\end{aligned}
	\]
	Setting $\mathbb W:=\mathbb X\times\R^{m_3}\times\R^{m_3}$, 
	$\mathbb{Y}:=\R^{m_1}\times\R^{m_2}\times\R^{m_3}\times\R^{m_3}$, 
	\[
		G(x,u,v):=\parens[\big]{g(x),h(x),p(x)-u,q(x)-v},
	\]
	and
	\[
		C:=\R^{m_1}_-\times\{0\}^{m_2+2m_3},\qquad
		D:=X\times\widetilde T,
	\]
	where we used $\widetilde T:=\set*{(u,v) \given (u_i,v_i)\in T\ \forall i\in\{1,\ldots,m_3\}}$,
	we can handle this situation in the framework of this paper.
\end{setting}

Constraint regions as characterized in \cref{set:standard_nonlinear_constraints} can be tackled with
a recently introduced version of RCPLD (relaxed constant positive linear dependence constraint
qualification), see \cite[Definition~1.1]{XuYe2020}.

\begin{definition}\label{Def:RCPLD}
	Let $\bar w\in\mathbb{W}$ be a feasible point of the optimization problem \eqref{Eq:GenP}
	in \cref{set:standard_nonlinear_constraints}. Then $\bar w$ is said to satisfy RCPLD
	whenever the following conditions hold:
	\begin{enumerate}
		\item[(i)] the family $(\nabla G_i(w))_{i\in J}$ has constant rank on a neighborhood of $\bar w$,
		\item[(ii)] there exists an index set $S\subset J$ such that the family $(\nabla G_i(\bar w))_{i\in S}$ is a basis
			of the subspace $\Span\set{\nabla G_i(\bar w) \given i\in J}$, and
		\item[(iii)] for each index set $I\subset I(\bar w)$, each set of multipliers
			$\lambda_i\geq 0$ ($i\in I$) and $\lambda_i\in\R$ ($i\in S$), not all vanishing
			at the same time, and each vector $\eta\in \mathcal N^\textup{lim}_D(\bar w)$
			which satisfy
			\[
				0\in\sum_{i\in I\cup S}\lambda_i\nabla G_i(\bar w)+\eta,
			\]
			we find neighborhoods $U$ of $\bar w$ and $V$ of $\eta$ such that
			for all $w\in U$ and $\tilde\eta\in\mathcal N^\textup{lim}_D(w)\cap V$,
			the vectors from
			\[
				\begin{cases}
					(\nabla G_i(w))_{i\in I\cup S},\,\tilde \eta	& \text{if }\tilde \eta \neq 0,\\
					(\nabla G_i(w))_{i\in I\cup S}					& \text{if }\tilde \eta = 0
				\end{cases}
			\]
			are linearly dependent.
	\end{enumerate}
\end{definition}

RCPLD has been introduced for standard nonlinear programs 
(i.e., $D:=\mathbb{W}=\R^n$ in \cref{set:standard_nonlinear_constraints}) in \cite{AndreaniHaeserSchuverdtSilva2012}.
Some extensions to complementarity-constrained programs can be found in
\cite{ChieuLee2013,GuoLin2013}.
A more restrictive RCPLD-type constraint qualification which is capable of handling an abstract constraint set
can be found in \cite[Definition~1]{GuoYe2018}.
	Let us note that RCPLD from \cref{Def:RCPLD}
	does not depend on the precise choice of the index set 
	$S$ in (ii).

In case where $D$ is a set of product structure, condition (iii) in \cref{Def:RCPLD} can be slightly
weakened in order to obtain a reasonable generalization of the classical relaxed constant positive 
linear dependence constraint qualification, see \cite[Remark~1.1]{XuYe2020} for details. 
Observing that GMFCQ from \eqref{eq:GMFCQ} takes the particular form
\[
	0\in\sum\limits_{i\in I(\bar w)\cup J}\lambda_i\nabla G_i(\bar w)+\mathcal N^\textup{lim}_D(\bar w),
	\quad
	\lambda_i\geq 0 \,(i\in I)
	\quad\Longrightarrow\quad
	\lambda_i=0\,(i\in I(\bar w)\cup J)
\]
in \cref{set:standard_nonlinear_constraints}, it is obviously sufficient for RCPLD.
The subsequently stated result generalizes related observations from
\cite{AndreaniMartinezRamosSilva2016,Ramos2019}.

\begin{lemma}\label{lem:RCPLD_implies_AM_regularity}
	Let $\bar w\in\mathbb{W}$ be a feasible point for the optimization problem \eqref{Eq:GenP} in
	\cref{set:standard_nonlinear_constraints} where RCPLD holds. 
	Then $\bar w$ is AM-regular.
\end{lemma}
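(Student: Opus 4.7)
The plan is to verify the set inclusion $\limsup_{w\to\bar w,\,z\to 0}\mathcal M(w,z)\subset\mathcal M(\bar w,0)$ via a Caratheodory-type reduction combined with a normalization argument that leverages all three items of RCPLD. Fix $\xi\in\limsup_{w\to\bar w,\,z\to 0}\mathcal M(w,z)$ and unpack the outer limit: there exist sequences $w^k\to\bar w$, $z^k\to 0$ and $\xi^k\to\xi$ together with multipliers $\lambda^k\in\mathcal N_C(G(w^k)-z^k)$ and $\eta^k\in\mathcal N_D^{\textup{lim}}(w^k)$ such that $\xi^k=\sum_{i=1}^m\lambda_i^k\nabla G_i(w^k)+\eta^k$, where $\lambda_i^k\geq 0$ with the usual complementarity condition for $i\le m_1$ and $\lambda_i^k\in\R$ for $i\in J$. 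Strict inactivity of $G_i$ at $\bar w$ for $i\in\{1,\dots,m_1\}\setminus I(\bar w)$ together with $z_i^k\to 0$ forces $\lambda_i^k=0$ for large $k$, and by items (i)--(ii) of RCPLD the family $(\nabla G_i(w^k))_{i\in S}$ is a basis of $\Span\set{\nabla G_j(w^k)\given j\in J}$ for large $k$, so the $J$-part of the sum can be rewritten over $S$ with unconstrained coefficients $\mu_j^k$.

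A standard Caratheodory reduction on the nonnegative combination indexed by $I(\bar w)$, followed by extraction of a subsequence, delivers a fixed $I\subset I(\bar w)$, coefficients $\hat\lambda_i^k\geq 0$ ($i\in I$) and $\hat\mu_j^k\in\R$ ($j\in S$) such that
\[
	\xi^k = \sum_{i\in I}\hat\lambda_i^k\nabla G_i(w^k) + \sum_{j\in S}\hat\mu_j^k\nabla G_j(w^k) + \eta^k
\]
with $(\nabla G_i(w^k))_{i\in I\cup S}$ linearly independent. Suppose for contradiction that $t_k:=\|\hat\lambda^k\|+\|\hat\mu^k\|+\|\eta^k\|\to\infty$; dividing the identity by $t_k$ and invoking the cone structure of $\mathcal N_D^{\textup{lim}}(w^k)$, a subsequential limit $(\tilde\lambda,\tilde\mu,\tilde\eta)$ of unit total norm exists with $\tilde\lambda_i\geq 0$, $\tilde\eta\in\mathcal N_D^{\textup{lim}}(\bar w)$ (by the stability property \eqref{Eq:stability}), and the nontrivial relation
\[
	0 = \sum_{i\in I}\tilde\lambda_i\nabla G_i(\bar w) + \sum_{j\in S}\tilde\mu_j\nabla G_j(\bar w) + \tilde\eta.
\]
Item (iii) of RCPLD then supplies neighborhoods $U$ of $\bar w$ and $V$ of $\tilde\eta$ guaranteeing linear dependence of $(\nabla G_i(w))_{i\in I\cup S}$ together with any $\eta'\in\mathcal N_D^{\textup{lim}}(w)\cap V$ (when $\eta'\neq 0$) for every $w\in U$. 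Specializing to $w=w^k$ and $\eta'=\eta^k/t_k$ immediately contradicts the linear-independence conclusion of the Caratheodory step when $\tilde\eta=0$; when $\tilde\eta\neq 0$, the dependence forces $\eta^k\in\Span(\nabla G_i(w^k))_{i\in I\cup S}$, from which an iterated Caratheodory refinement---absorbing $\eta^k$ into the gradient combination and performing a secondary sign-respecting reduction---eventually produces the same contradiction.

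With boundedness established, extract a further convergent subsequence $(\hat\lambda^k,\hat\mu^k,\eta^k)\to(\lambda^*,\mu^*,\eta^*)$: the sign and complementarity conditions transfer to the limit, \eqref{Eq:stability} delivers $\eta^*\in\mathcal N_D^{\textup{lim}}(\bar w)$, and padding $\lambda^*,\mu^*$ with zeros outside $I\cup S$ produces a full $\Lambda^*\in\mathcal N_C(G(\bar w))$ for which $\xi=G'(\bar w)^*\Lambda^*+\eta^*\in\mathcal M(\bar w,0)$. The principal technical obstacle is the case $\tilde\eta\neq 0$ in the boundedness step: unlike the classical RCPLD/AKKT-regularity proof for standard nonlinear programs, item (iii) here only delivers a linear-dependence statement involving the nonconvex limiting-normal-cone element, which on its own merely places $\eta^k$ in the span of the gradient family but does not contradict unboundedness directly; reconciling this with the sign constraint on $\hat\lambda^k$ and the uniqueness of the reduced expansion---necessarily via an additional Caratheodory pass---is the heart of the generalization from the NLP setting to the present one with a genuinely nonconvex set $D$.
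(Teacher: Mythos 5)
Your overall architecture matches the paper's: unpack the outer limit, kill the inactive inequality multipliers, use items (i)--(ii) of RCPLD to rewrite the equality part over the basis $S$, reduce via a Carath\'eodory-type lemma, rule out unbounded multipliers by contradiction with item (iii), and pass to the limit using the outer semicontinuity of $\mathcal N_D^\textup{lim}$. The gap sits exactly in the step you yourself flag as ``the heart of the generalization'', and your proposed repair does not close it. The paper applies the Carath\'eodory lemma of \cite{AndreaniHaeserSchuverdtSilva2012} to the family of generators \emph{augmented by $\eta^k$}, i.e., it treats $\eta^k$ as one more vector carrying a nonnegative coefficient $\sigma_k$. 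The output is an index set $I^k\subset I(\bar w)$ and a representation $\xi^k=\sum_{i\in I^k\cup S}\hat\mu_i^k\nabla G_i(w^k)+\sigma_k\eta^k$ in which the gradients \emph{together with} $\hat\eta^k:=\sigma_k\eta^k$ (when $\sigma_k>0$) are linearly independent. That is precisely the shape needed to contradict item (iii), whose conclusion is linear \emph{dependence} of the gradients together with a nearby normal-cone element. Your reduction excludes $\eta^k$ and only yields independence of $(\nabla G_i(w^k))_{i\in I\cup S}$; the dependence asserted by (iii) for the family including $\eta^k/t_k$ then carries no contradiction --- it merely places $\eta^k$ in $\Span\set{\nabla G_i(w^k)\given i\in I\cup S}$. (For the same reason, your claimed ``immediate'' contradiction in the case $\tilde\eta=0$ only works if $\eta^k/t_k$ is literally the zero vector; if it is nonzero but small, item (iii) again only gives dependence of the enlarged family.)

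The suggested fix --- ``absorbing $\eta^k$ into the gradient combination and performing a secondary sign-respecting reduction'' --- is not carried out and, as described, does not work: after absorption the coefficients on $I$ need not remain nonnegative, so the limit multiplier would not land in $\mathcal N_C(G(\bar w))$; the unboundedness hypothesis was formulated for the original multipliers and does not transfer to the absorbed ones; and any genuinely sign-respecting second pass would have to include $\eta^k$ as a nonnegatively weighted generator, which is exactly the paper's first (and only) Carath\'eodory pass, rendering your first pass moot. The correct move is to include $\eta^k$ in the positive combination from the start; everything downstream (freezing $I$ along a subsequence, normalization, the observation that the gradient multipliers cannot all vanish in the limit since otherwise the normalized limit would be zero, and the final passage to $\mathcal M(\bar w,0)$) then goes through essentially as you wrote it.
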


\begin{proof}
	Fix some $\xi\in \limsup_{w\to \bar w,\,z\to 0}\mathcal M(w,z)$.
	Then we find $\{w^k\},\{\xi^k\}\subset\mathbb W$ and $\{z^k\}\subset\R^m$ which satisfy
	$w^k\to\bar w$, $\xi^k\to\xi$, $z^k\to 0$, and
	$\xi^k\in\mathcal M(w^k,z^k)$ for all $k\in\N$. 
	Particularly, there are sequences $\{\lambda^k\}$ and
	$\{\eta^k\}$ satisfying $\lambda^k\in\mathcal N_C(G(w^k)-z^k)$, 
	$\eta^k\in\mathcal N^\textup{lim}_D(w^k)$, and $\xi^k=G'(w^k)^*\lambda^k+\eta_k$
	for each $k\in\N$.
	From $G(w^k)-z^k\to G(\bar w)$ and the special structure of $C$, we find
	$G_i(w^k)-z^k_i<0$ for all $i\in\{1,\ldots,m_1\}\setminus I(\bar w)$ and all sufficiently large
	$k\in\N$, i.e., 
	\[
		\lambda^k_i
		\begin{cases}
			=0		& i\in \{1,\ldots,m_1\}\setminus I(\bar w),\\
			\geq 0	& i\in I(\bar w)
		\end{cases}
	\]
	for sufficiently large $k\in\N$. Thus, we may assume without loss of generality that
	\[
		\xi^k=\sum\limits_{i\in I(\bar w)\cup J}\lambda^k_i\nabla G_i(w^k)+\eta^k
	\]
	holds for all $k\in\N$.
	By definition of RCPLD, $(\nabla G_i(w^k))_{i\in S}$ is a basis of the subspace
	$\Span\set{\nabla G_i(w^k) \given i\in J}$ for all sufficiently large $k\in\N$.
	Hence, there exist scalars $\mu^k_i$ ($i\in S$) such that
	\[
		\xi^k
		=
		\sum\limits_{i\in I(\bar w)}\lambda^k_i\nabla G_i(w^k)
		+
		\sum\limits_{i\in S}\mu^k_i\nabla G_i(w^k)
		+
		\eta^k
	\]
	holds for all sufficiently large $k\in\N$.
	On the other hand, \cite[Lemma~1]{AndreaniHaeserSchuverdtSilva2012} yields the existence
	of an index set $I^k\subset I(\bar w)$ and multipliers $\hat\mu^k_i>0$ ($i\in I^k$), 
	$\hat\mu^k_i\in\R$ ($i\in S$), and $\sigma_k\geq 0$
	such that
	\[
		\xi^k
		=
		\sum\limits_{i\in I^k\cup S}\hat\mu^k_i\nabla G_i(w^k)+\sigma_k\,\eta^k
	\]
	and
	\[
		\begin{aligned}
			&\sigma_k>0&	&\Longrightarrow&	&(\nabla G_i(w^k))_{i\in I^k\cup S},\,\eta^k
				\text{ linearly independent,}&\\
			&\sigma_k=0&	&\Longrightarrow&	&(\nabla G_i(w^k))_{i\in I^k\cup S}
				\text{ linearly independent.}&
		\end{aligned}
	\]
	Since there are only finitely many subsets of $I(\bar w)$, there needs to exist $I\subset I(\bar w)$
	such that $I^k=I$ holds along a whole subsequence. Along such a particular subsequence
	(without relabeling),
	we furthermore may assume $\sigma_k>0$ (otherwise, the proof will be easier) and, thus, may
	set $\hat\eta^k:=\sigma_k \eta^k\in\mathcal N^\textup{lim}_D(w^k)\setminus\{0\}$. 
	From above, we find linear independence of
	\[
		(\nabla G_i(w^k))_{i\in I\cup S},\,\hat{\eta}^k.
	\]
	Furthermore, we have
	\begin{equation}\label{eq:appropriate_representation}
		\xi^k=\sum\limits_{i\in I\cup S}\hat \mu^k_i\nabla G_i(w^k)+\hat\eta^k.
	\end{equation}
	
	Suppose that the sequence $\{((\hat\mu^k_i)_{i\in I\cup S},\hat\eta^k)\}$ is not bounded.
	Dividing \eqref{eq:appropriate_representation} by the norm of 
	$((\hat\mu^k_i)_{i\in I\cup S},\hat\eta^k)$, taking the limit $k\to\infty$, and
	respecting boundedness of $\{\xi^k\}$, continuity of $G'$, and outer semicontinuity 
	of the limiting normal cone
	yield the existence of a non-vanishing multiplier $((\hat\mu_i)_{i\in I\cup S},\hat\eta)$ 
	which satisfies $\hat\mu_i\geq 0$ ($i\in I$), $\hat\eta\in\mathcal N^\textup{lim}_D(\bar w)$,
	and
	\[
		0=\sum\limits_{i\in I\cup S}\hat\mu_i\nabla G_i(\bar w)+\hat\eta.
	\]
	Obviously, the multipliers $\hat\mu_i$ ($i\in I\cup S$) do not vanish at the same time since,
	otherwise, $\hat\eta=0$ would follow from above which yields a contradiction.
	Now, validity of RCPLD guarantees that the vectors 
	\[
		(\nabla G_i(w^k))_{i\in I\cup S},\,\hat{\eta}^k
	\]
	need to be linearly dependent for sufficiently large $k\in\N$. However, we already have shown
	above that these vectors are linearly independent, a contradiction.
	
	Thus, the sequence $\{((\hat\mu^k_i)_{i\in I\cup S},\hat\eta^k)\}$ is bounded and, therefore,
	possesses a convergent subsequence with limit
	$((\bar\mu_i)_{i\in I\cup S},\bar\eta)$. Taking the limit in \eqref{eq:appropriate_representation}
	while respecting $\xi^k\to\xi$, the continuity of $G'$, and the outer semicontinuity of the limiting
	normal cone, we come up with $\bar\mu_i\geq 0$ ($i\in I$), 
	$\bar\eta\in\mathcal N^\textup{lim}_D(\bar w)$, and
	\[
		\xi=\sum\limits_{i\in I\cup S}\bar\mu_i\nabla G_i(\bar w)+\bar\eta.
	\]
	Finally, we set $\bar\mu_i:=0$ for all $i\in\{1,\ldots,m\}\setminus(I\cup S)$. Then we
	have $(\bar\mu_i)_{i=1,\ldots,m}\in\mathcal N_C(G(\bar w))$ from $I\subset I(\bar w)$, i.e.,
	\[
		\xi\in G'(\bar w)^*\mathcal N_C(G(\bar w))+\mathcal N^\textup{lim}_D(\bar w)=\mathcal M(\bar w,0).
	\]
	This shows that $\bar w$ is AM-regular.
\end{proof}

A popular situation, where AM-regularity simplifies and, thus, becomes easier to verify, is described in the
following lemma which follows from \cite[Theorems~3.10, 5.2]{Mehlitz2020}.
\begin{lemma}\label{lem:affine_data_implies_AM_regularity}
	Let $\bar w\in\mathbb{W}$ be a feasible point for the optimization problem \eqref{Eq:GenP} 
	where $C$ is a polyhedron and $D$ is the union of finitely many polyhedrons.
	Then $\bar w$ is AM-regular if any only if 
	\[
		\limsup\limits_{w\to\bar w}\parens*{G'(w)^*\mathcal N_C(G(\bar w))+\mathcal N_D^\textup{lim}(\bar w)}
		\subset
		G'(\bar w)^*\mathcal N_C(G(\bar w))+\mathcal N_D^\textup{lim}(\bar w).
	\]
	Particularly, in case where $G$ is an affine function, $\bar w$ is AM-regular.
\end{lemma}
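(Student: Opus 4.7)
The plan is to establish the asserted equivalence by comparing the upper limit that defines AM-regularity in \cref{Def:AsymptoticRegularity} with the simpler upper limit appearing in the claim. The key tool is the well-known \emph{local majorization} of polyhedral normal cones: for the polyhedron $C$, whenever $y^k\to\bar y$ with $\bar y,y^k\in C$, one has $\mathcal N_C(y^k)\subset\mathcal N_C(\bar y)$ for all sufficiently large $k$; analogously, since $D$ is a finite union of polyhedra, $\mathcal N_D^\textup{lim}(w)\subset\mathcal N_D^\textup{lim}(\bar w)$ whenever $w$ is close enough to $\bar w$. These two facts will allow me, in one direction, to replace the varying normal cone $\mathcal N_C(G(w)-z)$ by the fixed cone $\mathcal N_C(G(\bar w))$, and in the other direction to realize an arbitrary element of the simpler $\limsup$ as an element of $\mathcal M(w^k,z^k)$ via an artificial slack.

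For the direction ``AM-regularity implies the claimed inclusion'', I would pick $\xi$ in the left-hand side with representing sequences $w^k\to\bar w$, $\lambda^k\in\mathcal N_C(G(\bar w))$, $\eta^k\in\mathcal N_D^\textup{lim}(w^k)$ satisfying $\xi^k:=G'(w^k)^*\lambda^k+\eta^k\to\xi$. Setting $z^k:=G(w^k)-G(\bar w)$, which tends to zero by continuity of $G$, yields $G(w^k)-z^k=G(\bar w)$, so $\lambda^k\in\mathcal N_C(G(w^k)-z^k)$ holds trivially. Hence $\xi^k\in\mathcal M(w^k,z^k)$, and AM-regularity gives $\xi\in\mathcal M(\bar w,0)$, which equals $G'(\bar w)^*\mathcal N_C(G(\bar w))+\mathcal N_D^\textup{lim}(\bar w)$.

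For the converse direction, I would take $\xi\in\limsup_{w\to\bar w,\,z\to 0}\mathcal M(w,z)$ with representing sequences $w^k,z^k,\lambda^k\in\mathcal N_C(G(w^k)-z^k)$, $\eta^k\in\mathcal N_D^\textup{lim}(w^k)$. Because $G(w^k)-z^k\to G(\bar w)\in C$, the local majorization for the polyhedron $C$ forces $\lambda^k\in\mathcal N_C(G(\bar w))$ for all large $k$, so $\xi^k$ already lies in $G'(w^k)^*\mathcal N_C(G(\bar w))+\mathcal N_D^\textup{lim}(w^k)$, and the assumed inclusion pushes $\xi$ into $G'(\bar w)^*\mathcal N_C(G(\bar w))+\mathcal N_D^\textup{lim}(\bar w)=\mathcal M(\bar w,0)$. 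Finally, for affine $G$ the derivative $G'(w)$ is the constant operator $G'(\bar w)$, so the $\limsup$ on the left of the claimed inclusion collapses to $G'(\bar w)^*\mathcal N_C(G(\bar w))+\limsup_{w\to\bar w}\mathcal N_D^\textup{lim}(w)$, which by the local majorization for $\mathcal N_D^\textup{lim}$ is contained in $G'(\bar w)^*\mathcal N_C(G(\bar w))+\mathcal N_D^\textup{lim}(\bar w)$. The main obstacle will be the clean invocation of the two local majorization statements and the observation that the right-hand side is a finite union of polyhedral cones translated by a fixed polyhedral cone, hence closed, which legitimizes the passage to the limit inside it.
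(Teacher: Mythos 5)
The paper does not actually prove this lemma; it only cites \cite[Theorems~3.10, 5.2]{Mehlitz2020}. Your argument is therefore a self-contained substitute, and its skeleton is sound: the choice $z^k:=G(w^k)-G(\bar w)$ to realize $\mathcal N_C(G(\bar w))=\mathcal N_C(G(w^k)-z^k)$ exactly, the local majorization $\mathcal N_C(y)\subset\mathcal N_C(\bar y)$ for the polyhedron $C$ and $\mathcal N_D^\textup{lim}(w)\subset\mathcal N_D^\textup{lim}(\bar w)$ for $w$ near $\bar w$ when $D$ is a finite union of polyhedra, and the closedness of $G'(\bar w)^*\mathcal N_C(G(\bar w))+\mathcal N_D^\textup{lim}(\bar w)$ (a finite union of polyhedral cones) are exactly the ingredients one needs, and each step of your two implications goes through with them.

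Two caveats. First, you silently prove the equivalence for the condition with $\mathcal N_D^\textup{lim}(w)$ at the \emph{moving} point inside the $\limsup$, whereas the displayed formula in the lemma has $\mathcal N_D^\textup{lim}(\bar w)$ frozen. These are not the same condition, and the printed one is in fact too strong for the ``only if'' direction: with $C:=\{0\}$, $G(w_1,w_2):=w_1w_2$ and $D$ the union of the two coordinate axes, $\bar w=0$ is AM-regular, yet $\limsup_{w\to 0}\bigl(G'(w)^*\mathcal N_C(0)+\mathcal N_D^\textup{lim}(0)\bigr)=\R^2\not\subset\mathcal N_D^\textup{lim}(0)$, since the $\limsup$ with the frozen $D$-cone admits base points $w\notin D$ and mixes normals from one branch of $D$ with gradients taken on another. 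So your reading is the correct one (and matches how the analogous simplification is used in \cref{lem:AM_regularity_in_disjunctive_setting}); you have effectively repaired a typo rather than proved the literal statement. Second, in the affine case the identity $\limsup_{w\to\bar w}(A+B(w))=A+\limsup_{w\to\bar w}B(w)$ that your ``collapses to'' suggests is false for general unbounded sets; the correct route — which you do have available — is to note that $G'(w)^*\mathcal N_C(G(\bar w))+\mathcal N_D^\textup{lim}(w)\subset G'(\bar w)^*\mathcal N_C(G(\bar w))+\mathcal N_D^\textup{lim}(\bar w)$ for all $w$ near $\bar w$ by local majorization, and then to pass to the limit inside this fixed closed set. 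With these two points made explicit, the proof is complete.
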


	Let us consider the situation where \eqref{Eq:GenP} is given as described in \cref{set:standard_nonlinear_constraints},
	and assume in addition that $D:=\mathbb W$ holds, i.e., that \eqref{Eq:GenP} is a standard nonlinear optimization
	problem with finitely many equality and inequality constraints.
	Then \cref{lem:affine_data_implies_AM_regularity} shows that AM-regularity corresponds to 
	the cone continuity property from \cite[Definition~3.1]{AndreaniMartinezRamosSilva2016},
	and the latter has been shown to be weaker than most of the established constraint qualifications
	which can be checked in terms of initial problem data.

The above lemma also helps us to find a tangible representation of AM-regularity in \cref{set:disjunctive_programs_with_bi_disjunction}.
\begin{lemma}\label{lem:AM_regularity_in_disjunctive_setting}
	Let $\bar x\in\mathbb X$ be a feasible point of the optimization problem from \cref{set:disjunctive_programs_with_bi_disjunction}.
	Furthermore, define a set-valued mapping $\widetilde{\mathcal M}\colon\mathbb X\rightrightarrows\mathbb X$ by
	\[
		\widetilde{\mathcal M}(x)
		:=
		\set*{
			\mathfrak L(x,\lambda,\rho,\mu,\nu,\xi)
			\given
			\begin{aligned}
			&0\leq\lambda\perp g(\bar x),\\
			&(\mu,\nu)\in\mathcal N^\textup{lim}_{\widetilde T}(p(\bar x),q(\bar x)),\\
			&\xi\in\mathcal N^\textup{lim}_X(\bar x)
			\end{aligned}
		}
	\]
	where $\mathfrak L\colon\mathbb X\times\R^{m_1}\times\R^{m_2}\times\R^{m_3}\times\R^{m_3}\times\mathbb X\to\mathbb X$ is the function given by
	\[
		\mathfrak L(x,\lambda,\rho,\mu,\nu,\xi)
		:=
		g'(x)^*\lambda+h'(x)^*\rho+p'(x)^*\mu+q'(x)^*\nu+\xi.
	\]
	Then the feasible point $(\bar x,p(\bar x),q(\bar x))$ of the associated problem \eqref{Eq:GenP} is AM-regular if and only if
	\begin{equation}\label{eq:AM_regularity_disjunctive}
		\limsup\limits_{x\to\bar x}\widetilde{\mathcal M}(x)
		\subset
		\widetilde{\mathcal M}(\bar x).
	\end{equation}
\end{lemma}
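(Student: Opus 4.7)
The plan is to apply \cref{lem:affine_data_implies_AM_regularity} in order to recast AM-regularity of $\bar w:=(\bar x,p(\bar x),q(\bar x))$ as a concrete outer-semicontinuity inclusion in the ambient product space $\mathbb W=\mathbb X\times\R^{m_3}\times\R^{m_3}$, and then to reduce that ambient inclusion to \eqref{eq:AM_regularity_disjunctive} by eliminating the slack components $u,v$. First I would verify the hypotheses of \cref{lem:affine_data_implies_AM_regularity}: the set $C=\R^{m_1}_-\times\{0\}^{m_2+2m_3}$ is polyhedral, and $D=X\times\widetilde T$ is a finite union of polyhedra since $\widetilde T$ coincides with the union over all $\sigma\in\{1,2\}^{m_3}$ of the polyhedral boxes $\prod_{i=1}^{m_3}T_{\sigma(i)}$. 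Thus AM-regularity at $\bar w$ is equivalent to
\[
	\limsup_{w\to\bar w}\bigl(G'(w)^*\mathcal N_C(G(\bar w))+\mathcal N_D^{\textup{lim}}(\bar w)\bigr)
	\subset
	G'(\bar w)^*\mathcal N_C(G(\bar w))+\mathcal N_D^{\textup{lim}}(\bar w).
\]

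Next I would unfold both sides using the product structures at hand. The cone $\mathcal N_C(G(\bar w))$ consists of quadruples $(\lambda,\rho,\mu,\nu)\in\R^{m_1}\times\R^{m_2}\times\R^{m_3}\times\R^{m_3}$ with $0\leq\lambda\perp g(\bar x)$ and $\rho,\mu,\nu$ unconstrained, while the product rule for limiting normals gives $\mathcal N_D^{\textup{lim}}(\bar w)=\mathcal N_X^{\textup{lim}}(\bar x)\times\mathcal N_{\widetilde T}^{\textup{lim}}(p(\bar x),q(\bar x))$. Computing the adjoint of $G'(x,u,v)$ then shows that an arbitrary element of the left-hand set at $w=(x,u,v)$ has the componentwise form
\[
	\bigl(\mathfrak L(x,\lambda,\rho,\mu,\nu,\xi),\;\hat\mu-\mu,\;\hat\nu-\nu\bigr)
\]
with $\xi\in\mathcal N_X^{\textup{lim}}(\bar x)$ and $(\hat\mu,\hat\nu)\in\mathcal N_{\widetilde T}^{\textup{lim}}(p(\bar x),q(\bar x))$.

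The decisive step would be a change of variables eliminating the free multipliers $\mu,\nu$. If $(\eta_x,\eta_u,\eta_v)$ lies in the outer limit with approximating sequences $(\lambda^k,\rho^k,\mu^k,\nu^k,\hat\mu^k,\hat\nu^k,\xi^k)$ along $x^k\to\bar x$, then the second and third components force $\mu^k=\hat\mu^k-\eta_u+o(1)$ and $\nu^k=\hat\nu^k-\eta_v+o(1)$. Substituting into the $\mathbb X$-component and invoking continuity of $p'$ and $q'$, I would obtain
\[
	\mathfrak L(x^k,\lambda^k,\rho^k,\hat\mu^k,\hat\nu^k,\xi^k)\;\to\;\eta_x+p'(\bar x)^*\eta_u+q'(\bar x)^*\eta_v,
\]
placing $\eta_x+p'(\bar x)^*\eta_u+q'(\bar x)^*\eta_v$ in $\limsup_{x\to\bar x}\widetilde{\mathcal M}(x)$. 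The same algebraic rearrangement, carried out at the limit $\bar w$, shows that $(\eta_x,\eta_u,\eta_v)\in G'(\bar w)^*\mathcal N_C(G(\bar w))+\mathcal N_D^{\textup{lim}}(\bar w)$ is equivalent to $\eta_x+p'(\bar x)^*\eta_u+q'(\bar x)^*\eta_v\in\widetilde{\mathcal M}(\bar x)$.

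Putting these two equivalences together, the ambient inclusion holds for every triple $(\eta_x,\eta_u,\eta_v)$ if and only if \eqref{eq:AM_regularity_disjunctive} holds: the ``$\Leftarrow$'' direction is immediate from the computation above, while ``$\Rightarrow$'' follows by lifting an arbitrary $\zeta\in\limsup_{x\to\bar x}\widetilde{\mathcal M}(x)$ to the triple $(\zeta,0,0)$ in the ambient outer limit, using the constant slack sequences $u^k\equiv p(\bar x)$, $v^k\equiv q(\bar x)$ and the tautological choice $\mu^k=\hat\mu^k$, $\nu^k=\hat\nu^k$. The delicate point, where I would proceed most cautiously, is the bookkeeping of the substitution $\mu^k\leftrightarrow\hat\mu^k$, $\nu^k\leftrightarrow\hat\nu^k$, since the unrestricted multipliers produced by $\mathcal N_C$ must be absorbed into the $\widetilde T$-normal cone in a way that simultaneously preserves normal-cone membership and convergence along a common subsequence.
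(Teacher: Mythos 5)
Your proposal is correct and follows essentially the same route as the paper: reduce via \cref{lem:affine_data_implies_AM_regularity} to outer semicontinuity of the ambient mapping $\widehat{\mathcal M}$ (your componentwise form matches the paper's, with your $\hat\mu,\hat\nu$ playing the role of the $\widetilde T$-normal-cone multipliers), then eliminate the slack components by absorbing the converging differences $\hat\mu^k-\mu^k,\hat\nu^k-\nu^k$ into the $\widetilde T$-multipliers, and handle the easy direction by the lifting $\zeta\mapsto(\zeta,0,0)$. The bookkeeping step you flag as delicate is unproblematic precisely because the shift $\hat\mu:=\mu+\eta_u$ keeps $(\hat\mu,\hat\nu)$ in $\mathcal N^{\textup{lim}}_{\widetilde T}(p(\bar x),q(\bar x))$ untouched while the free multipliers from $\mathcal N_C$ are unconstrained, which is exactly the substitution $\tilde\mu:=\mu-\alpha$, $\tilde\nu:=\nu-\beta$ used in the paper.
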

\begin{proof}
	First, observe that transferring the constraint region from \cref{set:disjunctive_programs_with_bi_disjunction} into the form used in
	\eqref{Eq:GenP} and keeping \cref{lem:affine_data_implies_AM_regularity} in mind shows that AM-regularity 
	of $(\bar x,p(\bar x),q(\bar x))$ is equivalent to
	\begin{equation}\label{eq:AM_regularity_disjunctive_definition}
		\limsup\limits_{x\to\bar x}\widehat{\mathcal M}(x)
		\subset
		\widehat{\mathcal M}(\bar x)
	\end{equation}
	where $\widehat{\mathcal M}\colon\mathbb X\rightrightarrows\mathbb X\times\R^{m_3}\times\R^{m_3}$ is given by
	\[
		\widehat{\mathcal M}(x)
		:=
		\set*{
			\parens[\big]{\mathfrak L(x,\lambda,\rho,\tilde\mu,\tilde\nu,\xi),-\tilde\mu+\mu,-\tilde\nu+\nu}
			\given
			\begin{aligned}
				&0\leq\lambda\perp g(\bar x),\\
				&(\mu,\nu)\in\mathcal N^\textup{lim}_{\widetilde T}(p(\bar x),q(\bar x)),\\
				&\xi\in\mathcal N^\textup{lim}_X(\bar x)
			\end{aligned}
		}.
	\]
	Observing that $\eta\in\widetilde{\mathcal M}(x)$ is equivalent to $(\eta,0,0)\in\widehat{\mathcal M}(x)$, \eqref{eq:AM_regularity_disjunctive_definition}
	obviously implies \eqref{eq:AM_regularity_disjunctive}.
	In order to show the converse relation, we assume that \eqref{eq:AM_regularity_disjunctive} holds
	and fix $(\eta,\alpha,\beta)\in\limsup_{x\to\bar x}\widehat{\mathcal M}(x)$.
	Then we find sequences $\{x^k\},\{\xi^k\},\{\eta^k\}\subset\mathbb X$, $\{\lambda^k\}\subset\R^{m_1}$, $\{\rho^k\}\subset\R^{m_2}$,
	and $\{\mu^k\},\{\tilde\mu^k\},\{\nu^k\},\{\tilde\nu^k\}\subset\R^{m_3}$ such that
	$x^k\to\bar x$, $\eta^k\to\eta$, $-\tilde\mu^k+\mu^k\to \alpha$, $-\tilde\nu^k+\nu^k\to \beta$,
	and $\eta^k=\mathfrak L(x^k,\lambda^k,\rho^k,\tilde\mu^k,\tilde\nu^k,\xi^k)$,
	$0\leq\lambda^k\perp g(\bar x)$, $(\mu^k,\nu^k)\in\mathcal N^\textup{lim}_{\widetilde T}(p(\bar x),q(\bar x))$, as well as
	$\xi^k\in\mathcal N^{\textup{lim}}_X(\bar x)$ for all $k\in\N$. 
	Setting $\alpha^k:=-\tilde \mu^k+\mu^k$ and $\beta^k:=-\tilde\nu^k+\nu^k$, we find 
	$\eta^k+p'(x^k)^*\alpha^k+q'(x^k)^*\beta^k=\mathfrak L(x^k,\lambda^k,\rho^k,\mu^k,\nu^k,\xi^k)$ for each $k\in\N$, and due
	to $\alpha^k\to\alpha$ and $\beta^k\to\beta$, validity of \eqref{eq:AM_regularity_disjunctive} yields
	$\eta+p'(\bar x)^*\alpha+q'(\bar x)^*\beta\in\widetilde{\mathcal M}(\bar x)$, i.e., 
	the existence of $\lambda\in\R^{m_1}$, $\rho\in\R^{m_2}$, $\mu,\nu\in\R^{m_3}$, and $\xi\in\mathbb X$ such that
	$\eta+p'(\bar x)^*\alpha+q'(\bar x)^*\beta=\mathfrak L(\bar x,\lambda,\rho,\mu,\nu,\xi)$,
	$0\leq\lambda\perp g(\bar x)$, $(\mu,\nu)\in\mathcal N^\textup{lim}_{\widetilde T}(p(\bar x),q(\bar x))$, and
	$\xi\in\mathcal N^\textup{lim}_X(\bar x)$.
	Thus, setting $\tilde\mu:=\mu-\alpha$ and $\tilde\nu:=\nu-\beta$, we find
	$(\eta,\alpha,\beta)\in\widehat{\mathcal M}(\bar x)$ showing \eqref{eq:AM_regularity_disjunctive_definition}.
\end{proof}

Let us specify these findings for MPCCs which can be stated
in the form \eqref{Eq:GenP} via \cref{set:disjunctive_programs_with_bi_disjunction}. Taking 
\cref{lem:affine_data_implies_AM_regularity,lem:AM_regularity_in_disjunctive_setting} into account, AM-regularity corresponds to the so-called
MPCC cone continuity property from \cite[Definition~3.9]{Ramos2019}.
The latter has been shown to be strictly weaker than MPCC-RCPLD, see \cite[Definition~4.1, Theorem~4.2, Example~4.3]{Ramos2019}
for a definition and this result.
A similar reasoning can be used in order to show that problem-tailored versions
of RCPLD associated with other classes of disjunctive programs are sufficient
for the respective AM-regularity.
This, to some extend, recovers our result from \cref{lem:RCPLD_implies_AM_regularity} although we need to admit that, exemplary,
RCPLD from \cref{Def:RCPLD} applied to MPCC in \cref{set:disjunctive_programs_with_bi_disjunction} does not correspond to
MPCC-RCPLD.

The above considerations underline that AM-regularity is a comparatively weak constraint
qualification for \eqref{Eq:GenP}. Exemplary, for standard nonlinear problems and for MPCCs, this follows from the above
comments and the considerations in \cite{AndreaniMartinezRamosSilva2016,Ramos2019}. For other types of disjunctive
programs, the situation is likely to be similar, see e.g.\ \cite[Figure~3]{LiangYe2021} for the
setting of switching-constrained optimization.
It remains a topic of future research to find further sufficient conditions for AM-regularity
which can be checked in terms of initial problem data, particularly, in situations where $C$ and $D$ are of particular structure
like in semidefinite or second-order cone programming, see e.g. \cite[Section~6]{AndreaniHaeserViana2020}.
Let us mention that the provably weakest constraint qualification which guarantees
that local minimizers of a geometrically constrained program are M-stationary is slightly weaker
than validity
of the pre-image rule for the computation of the limiting normal cone to the constraint region
of \eqref{Eq:GenP}, see \cite[Section~3]{GuoLin2013} for a discussion, but the latter cannot be checked in practice.
Due to \cite[Theorem~3.16]{Mehlitz2020}, AM-regularity indeed implies validity of this pre-image rule.

\section{A Spectral Gradient Method for Nonconvex Sets}\label{Sec:ProjGrad}

In this section, we discuss a solution method for constrained optimization problems
which applies whenever projections onto the feasible set are easy to find. 
Exemplary, our method can be used in situations where the feasible set has a 
disjunctive nonconvex structure.

To motivate the method, first consider the unconstrained optimization problem
\begin{equation*}
   \min_w \ \varphi(w)\quad \text{s.t.} \quad w \in \mathbb{R}^n
\end{equation*}
with a continuously differentiable objective function $ \varphi\colon\mathbb{R}^n \to \mathbb{R} $,
and let $ w^j $ be a current estimate for a solution of this problem. Computing
the next iterate $ w^{j+1} $ as the unique minimizer of the local quadratic model
\begin{equation*}
   \min_w \ \varphi(w^j) + \nabla \varphi (w^j)^\top (w - w^j) + \frac{\gamma_j}{2} \norm{ w - w^j }^2
\end{equation*}
for some $ \gamma_j > 0 $ leads to the explicit expression 
\begin{equation*}
   w^{j+1} := w^j - \frac{1}{\gamma_j} \nabla \varphi(w^j),
\end{equation*}
i.e., we get a steepest descent method with stepsize $ t_j := 1/\gamma_j $. Classical
approaches compute $ t_j $ using a suitable stepsize rule such that $\varphi(w^{j+1}) <\varphi(w^j) $.
On the other hand, one can view the update formula as a special instance of 
a quasi-Newton scheme
\begin{equation*}
   w^{j+1} := w^j - B_j^{-1} \nabla\varphi(w^j)
\end{equation*}
with the very simple quasi-Newton matrix $ B_j := \gamma_j I $ as an estimate
of the (not necessarily existing) Hessian $ \nabla^2\varphi(w^j) $. Then the 
corresponding quasi-Newton equation
\begin{equation*}
   B_{j+1} s^j = y^j \quad \text{with } s^j := w^{j+1} - w^j, \ 
   y^j := \nabla\varphi(w^{j+1}) - \nabla\varphi(w^j),
\end{equation*}
see \cite{DennisSchnabel1983}, 
reduces to the linear system $ \gamma_{j+1} s^j = y^j $. Solving this 
overdetermined system in a least squares sense, we then obtain the stepsize
\begin{equation*}
    \gamma_{j+1} := (s^j)^\top y^j / (s^j)^\top s^j
\end{equation*}
introduced by Barzilai and Borwein \cite{BarzilaiBorwein1988}. This stepsize 
often leads to very good numerical results, but may not yield a monotone 
decrease in the function value. A convergence proof for general nonlinear programs
is therefore difficult, even if the choice of $ \gamma_j $ is safeguarded in the 
sense that it is projected onto some box $ [ \gamma_{\min}, \gamma_{\max} ] $ for
suitable constants $ 0 < \gamma_{\min} < \gamma_{\max} $.

Raydan \cite{Raydan1997} then suggested to control this nonmonotone behavior
by combining the Barzilai--Borwein stepsize with the nonmonotone linesearch
strategy introduced by Grippo et al.\ \cite{GrippoLamparielloLucidi1986}.
This, in particular, leads to a global convergence theory for general 
unconstrained optimization problems.

This idea was then generalized by Birgin et al.~\cite{BirginMartinezRaydan2000}
to constrained optimization problems
\begin{equation*}
   \min_w \ \varphi(w) \quad \text{s.t.} \quad w \in W
\end{equation*}
with a nonempty, closed, and convex set $ W\subset\R^n $ and is called the \emph{nonmonotone spectral 
gradient method}. Here, we extend their approach to minimization problems
\begin{equation}\label{Eq:P}
   \min_w \ \varphi(w) \quad \text{s.t.} \quad w \in D
\end{equation}
with a continuously differentiable function $\varphi\colon \mathbb{W} \to \mathbb{R} $
and some nonempty, closed set $D\subset \mathbb{W} $, where $\mathbb W$ is an arbitrary
Euclidean space. Let us emphasize that neither $\varphi$ nor $D$ need to be convex in our
subsequent considerations.
A detailed description of the corresponding generalized spectral gradient
is given in \cref{Alg:NPG}.

\newcommand\Qref[1]{%
	\hyperref[eq:sub_j_i]{\text{($\textup{Q}(#1)$)}}%
}

\begin{algorithm2e}[htb]
	\SetAlgoLined
	\KwData{$ \tau > 1, \sigma \in (0,1), 0 < \gamma_{\min} \leq \gamma_{\max} < \infty,
    m \in \mathbb{N}, w^0 \in D $
	}
	\For{$j \leftarrow 0$ \KwTo $\infty$}{
		Set $ m_j := \min ( j, m ) $, $i \leftarrow 0$ and choose $ \gamma_j^0 \in [ \gamma_{\min}, \gamma_{\max} ] $\;
		\Repeat
		{
			\label{step:nonmonotone_linesearch}%
			$
			\varphi(w^{j,i}) \leq \max_{\shift = 0,1, \ldots, m_j}\varphi(w^{j-\shift}) +
			\sigma \innerp{\nabla\varphi (w^j)}{w^{j,i} - w^j}
			$
		}
		{
			Set $i \leftarrow i + 1$,
			$\gamma_{j,i} := \tau^{i-1} \gamma_j^0$
			and
			compute a solution $w^{j,i}$
			of
			\begin{algomathdisplay}
				\label{eq:sub_j_i}
				\tag{$\textup{Q}(j,i)$}
				\min_{w} \ \varphi(w^j) + \innerp{ \nabla \varphi (w^j) }{ w-w^j } + \frac{\gamma_{j,i}}{2} \norm{ w - w^j }^2
				\quad 
				\text{s.t.} \quad w \in D 
			\end{algomathdisplay}
			\If{$w^{j,i}$ satisfies a termination criterion\label{item:termination_NSG}}{
				\Return $w^{j,i}$\;
			}
		}
		Set $i_j := i$,
		$\gamma_j := \gamma_{j,i}$,
		and $w^{j+1} := w^{j,i}$\;
	}
	\caption{General Spectral Gradient Method}
	\label{Alg:NPG}
\end{algorithm2e}

Particular instances of this approach with nonconvex sets $ D $ can already be 
found in  \cite{BeckEldar2013,ChenGuoLuYe2017,ChenLuPong2016,GuoDeng2021}. Note that all iterates belong to the set
$ D $, that the subproblems \eqref{eq:sub_j_i} are always solvable,
and that we have to compute only one solution,
although their solutions are not necessarily unique.
We would like to emphasize that $\nabla\varphi(w^j)$ was used in the formulation of
\eqref{eq:sub_j_i} in order to underline that \cref{Alg:NPG} is a projected gradient
method. Indeed, simple calculations reveal that the global solutions of \eqref{eq:sub_j_i}
correspond to the projections of $w^j-\gamma_{j,i}^{-1}\nabla\varphi(w^j)$ onto $D$.
Note also that the acceptance criterion in \cref{step:nonmonotone_linesearch} is the 
nonmonotone Armijo rule introduced by Grippo et al.\ \cite{GrippoLamparielloLucidi1986}.
In particular, the parameter $ m_j := \min (j,m) $ controls the nonmonotonicity.
The choice $ m = 0 $ corresponds to the standard (monotone) method, whereas $ m > 0 $
typically allows larger stepsizes and often leads to faster convergence of the method.

We stress that the previous generalization of existing spectral gradient methods
plays a fundamental role in order to apply our subsequent augmented Lagrangian
technique to several interesting and difficult optimization problems, 
but the 
convergence analysis of \cref{Alg:NPG} can be carried out similar to the 
one given in \cite{GuoDeng2021} where a more specific situation is discussed.
We therefore skip the corresponding proofs
in this section, but for the reader's convenience, we present them
in \cref{sec:Appendix}.

	The goal of \cref{Alg:NPG} is the computation of a point which is
	approximately M-stationary for \eqref{Eq:P}.
We
recall that $ w $ is an M-stationary point of \eqref{Eq:P}
if 
\begin{equation*}
   0 \in \nabla\varphi(w) + \mathcal N_D^\textup{lim} (w)
\end{equation*}
holds, 
and that each locally optimal solution of \eqref{Eq:P} is M-stationary
by \cite[Theorem~6.1]{Mordukhovich2018}.
Similarly, since $ w^{j,i} $ solves the subproblem
\Qref{j,i},
it satisfies
the corresponding M-stationarity condition
\begin{equation}\label{eq:M_St_surrogate_ji}
	0 \in \nabla\varphi(w^j) + \gamma_{j,i} \parens[\big]{ w^{j,i} - w^j } + \mathcal N_D^\textup{lim} (w^{j,i}) .
\end{equation}
Let us point the reader's attention to the fact that strong stationarity, where the
limiting normal cone is replaced by the smaller regular normal cone in the stationarity system,
provides a more restrictive necessary optimality condition for \eqref{Eq:P} and the surrogate
\Qref{j,i},
see \cite[Definition~6.3, Theorem~6.12]{RockafellarWets2009}.
It is well known that the limiting normal cone is the outer limit of the regular normal cone.
In contrast to the limiting normal cone, the regular one is not robust in the sense of
\eqref{Eq:stability}, and since we are interested in taking limits later on, one either way
ends up with a stationarity systems in terms of limiting normals at the end. 
Thus, we will rely on the limiting normal cone and the associated concept of M-stationarity.

For the following theoretical results,
we neglect the termination criterion in \cref{item:termination_NSG}.
	This means that \cref{Alg:NPG} does not terminate
	and performs either infinitely many inner or infinitely many outer interations.
	The first result analyzes the inner loop.

\begin{proposition}\label{Prop:InnerLoop}
	Consider a fixed (outer) iteration $ j $ in \cref{Alg:NPG}.
	Then the inner loop terminates (due to \cref{step:nonmonotone_linesearch})
			or
			\begin{equation}
				\label{eq:inner_not_termination}
				\norm{
					\gamma_{j,i} \parens[\big]{ w^j - w^{j,i} }
					+
					\nabla\varphi(w^{j,i}) - \nabla\varphi(w^j)
				}
				\to 0
				\qquad\text{as}\quad
				i \to \infty.
			\end{equation}
			If the inner loop does not terminate,
			we get $w^{j,i} \to w^j$ and $w^j$ is M-stationary.
\end{proposition}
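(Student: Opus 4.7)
My plan is to assume the inner loop at iteration $j$ does not terminate, so that $\gamma_{j,i}=\tau^{i-1}\gamma_j^0\to\infty$, and then establish the three conclusions in turn. A first observation I need is that $w^{j,i}\ne w^j$ for every $i$: indeed, at $w^{j,i}=w^j$ the Armijo test in \cref{step:nonmonotone_linesearch} reduces to the trivially true inequality $\varphi(w^j)\le\max_{r}\varphi(w^{j-r})$, which would force termination. The second basic ingredient is the global optimality of $w^{j,i}$ in the subproblem \Qref{j,i}: since $w^j\in D$ attains the objective value $\varphi(w^j)$ there, $w^{j,i}$ can only do better, giving $\tfrac{\gamma_{j,i}}{2}\|w^{j,i}-w^j\|^2 \le -\langle\nabla\varphi(w^j),w^{j,i}-w^j\rangle \le \|\nabla\varphi(w^j)\|\,\|w^{j,i}-w^j\|$. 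Dividing by the positive quantity $\|w^{j,i}-w^j\|$ yields $\|w^{j,i}-w^j\|\le 2\|\nabla\varphi(w^j)\|/\gamma_{j,i}\to 0$, so $w^{j,i}\to w^j$.

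The main technical step is to upgrade this to $\gamma_{j,i}\|w^{j,i}-w^j\|\to 0$, since then \eqref{eq:inner_not_termination} follows at once from the continuity of $\nabla\varphi$ (which forces $\nabla\varphi(w^{j,i})-\nabla\varphi(w^j)\to 0$). To this end, I plan to exploit that the Armijo test fails for every $i$; comparing with the term $r=0$ in the maximum, this yields $\varphi(w^{j,i})-\varphi(w^j) > \sigma\,\langle\nabla\varphi(w^j),w^{j,i}-w^j\rangle$. A first-order Taylor expansion of $\varphi$ around $w^j$ rewrites the left-hand side as $\langle\nabla\varphi(w^j),w^{j,i}-w^j\rangle + o(\|w^{j,i}-w^j\|)$, the $o$-term being controlled by the modulus of continuity of $\nabla\varphi$ near $w^j$ together with $w^{j,i}\to w^j$. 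Rearranging gives $(1-\sigma)\bigl(-\langle\nabla\varphi(w^j),w^{j,i}-w^j\rangle\bigr) < o(\|w^{j,i}-w^j\|)$. Substituting the quadratic lower bound on the left-hand term established above, dividing by $\|w^{j,i}-w^j\|>0$, and recalling $\sigma\in(0,1)$, I arrive at $\tfrac{1-\sigma}{2}\,\gamma_{j,i}\|w^{j,i}-w^j\| < o(1)$, hence $\gamma_{j,i}\|w^{j,i}-w^j\|\to 0$ as required. I expect this interplay between the Armijo failure and the smallness of the Taylor remainder to be the main obstacle, since one has to beat a quantity that a priori grows like $\gamma_{j,i}$.

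For the last assertion, I rewrite the stationarity identity \eqref{eq:M_St_surrogate_ji} of the subproblem as $-\nabla\varphi(w^j) - \gamma_{j,i}\bigl(w^{j,i}-w^j\bigr)\in\mathcal N_D^\textup{lim}(w^{j,i})$. As $i\to\infty$, the base point $w^{j,i}$ converges to $w^j$, while the left-hand vector tends to $-\nabla\varphi(w^j)$ by virtue of the bound $\gamma_{j,i}\|w^{j,i}-w^j\|\to 0$ just derived. The outer semicontinuity of the limiting normal cone expressed in \eqref{Eq:stability} then yields $-\nabla\varphi(w^j)\in\mathcal N_D^\textup{lim}(w^j)$, which is precisely the M-stationarity of $w^j$ for \eqref{Eq:P}.
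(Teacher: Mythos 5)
Your proof is correct and uses essentially the same ingredients as the paper's: the optimality inequality for \Qref{j,i} combined with Cauchy--Schwarz to get $w^{j,i}\to w^j$, the interplay between the failed Armijo test and the Taylor remainder to force $\gamma_{j,i}\norm{w^{j,i}-w^j}\to 0$, and outer semicontinuity of $\mathcal N_D^\textup{lim}$ to pass to the limit in \eqref{eq:M_St_surrogate_ji}. The only difference is organizational: the paper splits on whether $\limsup_i\gamma_{j,i}\norm{w^{j,i}-w^j}>0$ and shows termination in that case, while you argue the contrapositive directly (adding the small but necessary observation that $w^{j,i}\neq w^j$ under non-termination to justify the divisions), which amounts to the same estimates.
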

We refer to \cref{sec:Appendix} for the proof.
It remains to analyze the situation where the inner loop always terminates.
Let $ w^0 \in D $ be the starting point from \cref{Alg:NPG}, and let
\begin{equation*}
   \mathcal{S}_\varphi(w^0) := \set[\big]{ w \in D \given \varphi(w) \leq\varphi(w^0) }
\end{equation*}
denote the corresponding (feasible) sublevel set. Then the following observation holds, see
\cite{GrippoLamparielloLucidi1986,WrightNowak2009} and \cref{sec:Appendix}
for the details.

\begin{proposition}\label{Prop:Diffzero}
	We assume that the inner loop in \cref{Alg:NPG} always terminates
	(due to \cref{step:nonmonotone_linesearch})
	and we denote by
	$ \{ w^j \} $ the infinite sequence of (outer) iterates.
Assume that $ \varphi $
is bounded from below and uniformly continuous on $ \mathcal{S}_\varphi(w^0) $. Then
we have
$ \norm{ w^{j+1} - w^j } \to 0 $ as $ j \to \infty $.
\end{proposition}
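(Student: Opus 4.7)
The plan is to follow the classical nonmonotone line-search argument due to Grippo--Lampariello--Lucidi \cite{GrippoLamparielloLucidi1986}, adapted to the setting in which the subproblems are projections onto the possibly nonconvex set $D$ rather than onto a convex set. The central ingredient is to extract, from the fact that $w^{j+1}=w^{j,i_j}$ globally solves \Qref{j,i_j} over $D$, a quantitative descent estimate in terms of $\|w^{j+1}-w^j\|$. Since $w^j\in D$ is itself feasible for \Qref{j,i_j}, plugging $w=w^j$ into the inequality $\varphi(w^j)+\langle\nabla\varphi(w^j),w^{j+1}-w^j\rangle+\frac{\gamma_j}{2}\|w^{j+1}-w^j\|^2\leq\varphi(w^j)$ yields the key bound
\[
   \langle\nabla\varphi(w^j),w^{j+1}-w^j\rangle\;\leq\;-\tfrac{\gamma_j}{2}\|w^{j+1}-w^j\|^2.
\]
Since $i_j\geq 1$ and $\gamma_j^0\in[\gammamin,\gammamax]$, we have $\gamma_j\geq\gammamin$, so together with the acceptance criterion in \cref{step:nonmonotone_linesearch} we obtain the nonmonotone descent estimate
\begin{equation}\label{eq:plan-descent}
   \varphi(w^{j+1})\;\leq\;V(j)-\tfrac{\sigma\gammamin}{2}\|w^{j+1}-w^j\|^2,
\end{equation}
where $V(j):=\max_{r=0,\ldots,m_j}\varphi(w^{j-r})$.

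Next I would establish the standard facts that $\{V(j)\}$ is nonincreasing and that every iterate lies in $\mathcal S_\varphi(w^0)$. The monotonicity of $V(j)$ follows because $V(j+1)\leq\max\{V(j),\varphi(w^{j+1})\}$ and $\varphi(w^{j+1})\leq V(j)$ by the Armijo inequality; in particular, $\varphi(w^j)\leq V(j-1)\leq V(0)=\varphi(w^0)$. Hence uniform continuity of $\varphi$ on $\mathcal S_\varphi(w^0)$ applies along the whole iteration. Since $\varphi$ is bounded below, $\{V(j)\}$ converges to some $V^\star\in\R$.

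Let $\ell(j)\in\{j-m_j,\ldots,j\}$ be an index attaining the maximum in the definition of $V(j)$. Applying \eqref{eq:plan-descent} at step $\ell(j)-1$ gives
\[
   \tfrac{\sigma\gammamin}{2}\|w^{\ell(j)}-w^{\ell(j)-1}\|^2
   \;\leq\;V(\ell(j)-1)-V(j),
\]
and the right-hand side tends to $0$ because $V(j)\to V^\star$ and $\ell(j)\to\infty$. This yields $\|w^{\ell(j)}-w^{\ell(j)-1}\|\to 0$ and hence, by uniform continuity of $\varphi$, $\varphi(w^{\ell(j)-1})\to V^\star$ as well.

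The main technical step is now the Grippo--Lampariello--Lucidi induction: for every fixed $k\in\N$,
\[
   \lim_{j\to\infty}\|w^{\ell(j)-k}-w^{\ell(j)-k-1}\|=0
   \qquad\text{and}\qquad
   \lim_{j\to\infty}\varphi(w^{\ell(j)-k})=V^\star.
\]
The induction step combines \eqref{eq:plan-descent} at step $\ell(j)-k-1$ (which produces a bound $\frac{\sigma\gammamin}{2}\|w^{\ell(j)-k}-w^{\ell(j)-k-1}\|^2\leq V(\ell(j)-k-1)-\varphi(w^{\ell(j)-k})$ whose right-hand side vanishes by the inductive hypothesis on function values) with uniform continuity of $\varphi$ on $\mathcal S_\varphi(w^0)$ to propagate $\varphi(w^{\ell(j)-k-1})\to V^\star$. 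The obstacle I expect to be delicate is precisely the bookkeeping in this induction: one must be careful that the shifted indices $\ell(j)-k-1$ still diverge to infinity (which uses $m_j\leq m$) and that $w^{\ell(j)-k-1}\in\mathcal S_\varphi(w^0)$, so that uniform continuity may be invoked. Running the induction up to $k=m+1$, every index $j$ with $j$ sufficiently large can be written as $\ell(j')-k$ for some $0\leq k\leq m+1$ and some $j'$, since $\ell(j')\in\{j'-m,\ldots,j'\}$. This yields $\|w^{j+1}-w^j\|\to 0$ along the full sequence, which completes the proof.
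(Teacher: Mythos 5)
Your proposal is correct and follows essentially the same route as the paper's proof: the descent estimate from the optimality of $w^{j+1}$ for the projection subproblem combined with the nonmonotone Armijo rule, monotonicity and convergence of $\varphi(w^{l(j)})$, the Grippo--Lampariello--Lucidi induction on the shifted indices $l(j)-\shift$, and the final observation that every index can be written as $l(j')-\shift$ with $\shift\in\{1,\ldots,m+1\}$ (the paper phrases this last step as a contradiction with a subsequence on which $\shift$ is constant, but the content is identical).
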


The previous result allows to prove the following main convergence result for 
\cref{Alg:NPG}, see, again,
\cref{sec:Appendix}
for a complete proof.

\begin{proposition}\label{Thm:ConvNPG}
	We assume that the inner loop in \cref{Alg:NPG} always terminates
	(due to \cref{step:nonmonotone_linesearch})
	and we denote by
	$ \{ w^j \} $ the infinite sequence of (outer) iterates.
Assume that $ \varphi $
is bounded from below and uniformly continuous on $ \mathcal{S}_\varphi(w^0) $.
Suppose that $\bar w$ is an accumulation point of $\{ w^j \}$,
i.e., $w^j \to_K \bar w$ along a subsequence $K$.
Then $\bar w$ is 
an M-stationary point of the optimization problem
\eqref{Eq:P},
and we have
$\gamma_j \, \parens*{ w^{j+1} - w^j } \to_K 0$.
\end{proposition}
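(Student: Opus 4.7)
The plan is to pass to the limit in the surrogate M-stationarity relation \eqref{eq:M_St_surrogate_ji} satisfied by the accepted step $w^{j+1}=w^{j,i_j}$, namely
\[
	0 \in \nabla\varphi(w^j) + \gamma_j\parens[\big]{w^{j+1}-w^j} + \mathcal N_D^\textup{lim}(w^{j+1}),
\]
and to invoke the outer semicontinuity \eqref{Eq:stability} of the limiting normal cone. Continuity of $\nabla\varphi$ yields $\nabla\varphi(w^j)\to_K\nabla\varphi(\bar w)$, and \cref{Prop:Diffzero} gives $\norm{w^{j+1}-w^j}\to 0$, so also $w^{j+1}\to_K\bar w$. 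Both conclusions of the proposition will therefore drop out simultaneously once I prove the second one, $\gamma_j(w^{j+1}-w^j)\to_K 0$: taking limits on $K$ in the above inclusion will then force $-\nabla\varphi(\bar w)\in\limsup_{w\to\bar w}\mathcal N_D^\textup{lim}(w)\subset\mathcal N_D^\textup{lim}(\bar w)$.

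The work concentrates on $\gamma_j(w^{j+1}-w^j)\to_K 0$, which I handle via a dichotomy on the inner counter $i_j$. If $\{i_j\}_{j\in K}$ has a bounded subsequence, then along it $\gamma_j=\tau^{i_j-1}\gamma_j^0\leq\tau^{i_j-1}\gammamax$ remains bounded and the claim follows from $\norm{w^{j+1}-w^j}\to 0$ alone. Otherwise $i_j\to\infty$ along a subsequence (still denoted $K$), whence $\gamma_j\to\infty$; this is the delicate branch. Setting $\tgamma_j:=\gamma_{j,i_j-1}$ and $\tilde w^j:=w^{j,i_j-1}$, so that $\gamma_j=\tau\tgamma_j$, the fact that $i_j$ is the first accepted index means that the Armijo test failed at $i_j-1$; bounding the $\max$ below by $\varphi(w^j)$ turns this into
\[
	\varphi(\tilde w^j) > \varphi(w^j) + \sigma\innerp{\nabla\varphi(w^j)}{\tilde w^j-w^j}.
\]
Combining this with the first-order Taylor expansion of $\varphi$ at $w^j$ and with the subproblem-optimality bound $\innerp{\nabla\varphi(w^j)}{\tilde w^j-w^j}+\tfrac{\tgamma_j}{2}\norm{\tilde w^j-w^j}^2\leq 0$ (obtained by testing $\Qref{j,i_j-1}$ against the competitor $w^j\in D$, which also yields $\tgamma_j\norm{\tilde w^j-w^j}\leq 2\norm{\nabla\varphi(w^j)}$ and hence $\tilde w^j\to w^j$), a routine division by $\norm{\tilde w^j-w^j}$ extracts $\tgamma_j\norm{\tilde w^j-w^j}\to 0$ along $K$.

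The last step transfers this decay from $\tilde w^j$ to $w^{j+1}$. Both vectors minimize their respective strongly convex quadratics over $D$; substituting each competitor into the other's optimality inequality and adding makes the linear-in-$w$ terms cancel, leaving
\[
	\frac{\gamma_j-\tgamma_j}{2}\norm{w^{j+1}-w^j}^2 \leq \frac{\gamma_j-\tgamma_j}{2}\norm{\tilde w^j-w^j}^2.
\]
Since $\gamma_j-\tgamma_j=(\tau-1)\tgamma_j>0$, this gives $\norm{w^{j+1}-w^j}\leq\norm{\tilde w^j-w^j}$, whence $\gamma_j\norm{w^{j+1}-w^j}\leq\tau\tgamma_j\norm{\tilde w^j-w^j}\to 0$. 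I expect this unbounded-$i_j$ branch to be the main obstacle: the cheap optimality bound $\gamma_j\norm{w^{j+1}-w^j}\leq 2\norm{\nabla\varphi(w^j)}$ provides only boundedness and cannot force the product to zero, so the argument genuinely needs both the Armijo failure at the previous trial and the convex-quadratic exchange trick above.
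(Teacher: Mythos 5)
Your proof is correct and follows essentially the same route as the paper's: a dichotomy on $i_j$ (equivalently, on $\gamma_j$), the Armijo failure at the penultimate trial $w^{j,i_j-1}$ combined with the subproblem optimality inequality and a Taylor/mean-value estimate to obtain $\tgamma_j\,\norm{\tilde w^j-w^j}\to 0$, the quadratic exchange inequality to transfer this decay to $w^{j+1}$, and finally outer semicontinuity of $\mathcal N_D^\textup{lim}$ to pass to the limit in \eqref{eq:M_St_surrogate_ji}. The only cosmetic point is that your two branches should be glued by the standard subsequence-of-subsequences argument (which the paper invokes explicitly), since establishing the decay only along a bounded-$i_j$ subsequence does not by itself give $\gamma_j\parens{w^{j+1}-w^j}\to_K 0$ along all of $K$.
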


	From the proof of \cref{Prop:Diffzero}, it can be easily seen that the iterates 
	of \cref{Alg:NPG} belong to the sublevel set $\mathcal S_\varphi(w^0)$ although
	the associated sequence of function values does not need to be monotonically
	decreasing. Hence, whenever this sublevel set is bounded, e.g.,
	if $\varphi$ is coercive or if $D$ is bounded, the existence of an
	accumulation point as in \cref{Thm:ConvNPG} is ensured.
	Moreover, the boundedness of
	$\mathcal S_\varphi(w^0)$
	implies that
	this set is compact.
	Hence, $\varphi$ is automatically bounded from below
	and uniformly continuous on
	$\mathcal S_\varphi(w^0)$ in this situation.

	By combining
	\cref{Prop:InnerLoop,Thm:ConvNPG}
	we get the following convergence result.
	\begin{theorem}
		\label{thm:overall_convergence}
		We consider \cref{Alg:NPG} without termination in \cref{item:termination_NSG}
		and
		assume that
		Then exactly one of the following situations occurs.
		\begin{enumerate}[label=(\roman*)]
			\item
				The inner loop does not terminate in the outer iteration $j$,
				$w^{j,i} \to w^j$ as $i \to \infty$, $w^j$ is M-stationary,
				and \eqref{eq:inner_not_termination} holds.
			\item
				The inner loop always terminates.
				The infinite sequence $\{w^j\}$ of outer iterates possesses
				convergent subsequences $\{w^j\}_{K}$
				and
				every convergent subsequence satisfies
				$w^j \to_K \bar w$, $\bar w$ is M-stationary,
				and
				$\gamma_j \, \parens*{ w^{j+1} - w^j } \to_K 0$.
		\end{enumerate}
	\end{theorem}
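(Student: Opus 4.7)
The plan is to recognize that \cref{thm:overall_convergence} is essentially a bookkeeping combination of the two preceding results, \cref{Prop:InnerLoop} and \cref{Thm:ConvNPG}, partitioned by whether the inner loop ever fails to terminate. Since the two situations are defined by the mutually exclusive and exhaustive events ``there exists an outer iteration $j$ at which \cref{Alg:NPG} enters an infinite inner loop'' and ``every outer iteration produces a new iterate $w^{j+1}$ via a finitely terminating inner loop'', the proof really amounts to verifying that the hypotheses of the earlier propositions are met in each branch.

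First I would handle case (i). If the inner loop in some outer iteration $j$ does not terminate through \cref{step:nonmonotone_linesearch}, then \cref{Prop:InnerLoop}, applied to exactly this $j$, delivers the conclusions $w^{j,i}\to w^j$ as $i\to\infty$, M-stationarity of $w^j$, and the limit \eqref{eq:inner_not_termination}. Nothing else has to be checked here because \cref{Prop:InnerLoop} makes no assumption beyond the ones standing throughout \cref{Sec:ProjGrad}.

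For case (ii), I would invoke the standing assumption of the theorem (the boundedness of the sublevel set $\mathcal S_\varphi(w^0)$, equivalently coercivity of $\varphi$ on $D$ or compactness of $D$). As pointed out in the remark following \cref{Thm:ConvNPG}, boundedness of $\mathcal S_\varphi(w^0)$ together with continuity of $\varphi$ forces $\mathcal S_\varphi(w^0)$ to be compact, and in particular $\varphi$ is automatically bounded from below and uniformly continuous on this set, so the hypotheses of \cref{Prop:Diffzero} and \cref{Thm:ConvNPG} are fulfilled. Since the proof of \cref{Prop:Diffzero} shows that the entire sequence $\{w^j\}$ lies in $\mathcal S_\varphi(w^0)$, compactness yields at least one convergent subsequence $w^j\to_K \bar w$. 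Then \cref{Thm:ConvNPG} applied to any such subsequence produces M-stationarity of $\bar w$ together with $\gamma_j(w^{j+1}-w^j)\to_K 0$.

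There is no substantial obstacle in this proof; the only subtlety is making explicit that the alternative in the theorem is dichotomous (so the logical structure of ``exactly one of (i), (ii)'' is justified), and invoking the remark after \cref{Thm:ConvNPG} so that the lower boundedness and uniform continuity hypotheses of \cref{Prop:Diffzero,Thm:ConvNPG} need not be postulated separately. After these bookkeeping observations, the two cases fall out by direct citation.
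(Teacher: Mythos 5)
Your proposal is correct and matches the paper's intent exactly: the paper derives \cref{thm:overall_convergence} precisely "by combining \cref{Prop:InnerLoop,Thm:ConvNPG}", with the (typographically truncated) standing assumption being boundedness of $\mathcal S_\varphi(w^0)$, whose compactness supplies the accumulation points and the hypotheses of \cref{Prop:Diffzero,Thm:ConvNPG}, just as you argue. Your explicit remarks on the dichotomy of the two cases and on why case (i) needs no extra hypotheses are exactly the bookkeeping the paper leaves implicit.
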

	This result shows that the infinite sequence of (inner or outer)
	iterates of \cref{Alg:NPG}
	always converges towards M-stationary points (along subsequences).
	Note that the boundedness of
	$\mathcal S_\varphi(w^0)$
	can be replaced by the assumptions on $\varphi$ of \cref{Thm:ConvNPG},
	but then the outer iterates $\{w^j\}$
	might fail to possess accumulation points.

	In what follows,
	we show that these theoretical results
	also give rise to a reasonable and applicable termination criterion
	which can be used in \cref{item:termination_NSG}.
	To this end, we note that
	the optimality condition
	\eqref{eq:M_St_surrogate_ji} is equivalent to
	\begin{equation*}
		\gamma_{j,i} \parens[\big]{ w^j - w^{j,i} }
		+
		\nabla\varphi(w^{j,i}) - \nabla\varphi(w^j)
		\in
		\nabla\varphi(w^{j,i}) + \mathcal N_D^\textup{lim} (w^{j,i})
		.
	\end{equation*}
	This motivates
	the usage of
	\begin{equation}
		\label{eq:inner_inner_termination}
		\norm{
			\gamma_{j,i} \parens[\big]{ w^j - w^{j,i} }
			+
			\nabla\varphi(w^{j,i}) - \nabla\varphi(w^j)
		}
		\leq\varepsilon_{\textup{tol}}
	\end{equation}
	(or a similar condition),
	with $\varepsilon_{\textup{tol}} > 0$,
	as a termination criterion in \cref{item:termination_NSG}.
	Indeed, \cref{Prop:InnerLoop}
	implies that the inner loop always terminates
	if \eqref{eq:inner_inner_termination} is used.
	Moreover, the termination criterion \eqref{eq:inner_inner_termination} directly encodes
	that $w^{j,i}$
	is approximately M-stationary for \eqref{Eq:P}.
	This is very desirable since the goal of \cref{Alg:NPG}
	is the computation of approximately M-stationary points.

	Furthermore,
	we can check that condition \eqref{eq:inner_inner_termination}
	always ensures the finite termination of \cref{Alg:NPG}
	if the mild assumptions of \cref{thm:overall_convergence}
	(or the even weaker assumptions of \cref{Thm:ConvNPG})
	are satisfied.
	Indeed,
	due to $\gamma_j = \gamma_{j,i_j}$
	and $w^{j+1} = w^{j,i_j}$,
	we have $\gamma_{j,i_j} \parens[\big]{w^{j} - w^{j,i_j}} = \gamma_j \parens[\big]{w^j - w^{j+1}} \to_K 0$.
	Using
	$w^{j+1}, w^{j} \to_K \bar w$
	and the continuity of $\nabla\varphi \colon \mathbb W\to\mathbb W$
	shows
	$\nabla\varphi(w^{j,i_j}) - \nabla\varphi(w^j) = \nabla\varphi(w^{j+1}) - \nabla\varphi(w^j) \to_K 0$.
	Thus, the left-hand side of \eqref{eq:inner_inner_termination}
	with $i = i_j$
	is arbitrarily small if $j \in K$ is large enough.
	Thus,
	\cref{Alg:NPG} with the termination criterion \eqref{eq:inner_inner_termination} terminates in finitely many steps.

Let us mention that the above convergence theory differs from the one provided in
\cite{ChenGuoLuYe2017,ChenLuPong2016} since no Lipschitzianity of 
$\nabla \varphi\colon\mathbb W\to\mathbb W$ is needed.
	In the particular setting of complementarity-constrained optimization, related
	results have been obtained in \cite[Section~4]{GuoDeng2021}. 
	Our findings substantially generalize the theory from \cite{GuoDeng2021}
	to arbitrary set constraints.

\section{An Augmented Lagrangian Approach for Structured Geometric Constraints}\label{Sec:ALM}

\Cref{Sub:ALM} contains a detailed statement of our augmented Lagrangian method
applied to the general class of problems \eqref{Eq:GenP} together with several
explanations. The convergence theory is then presented in \cref{Sub:ALMConvergence}.

\subsection{Statement of the Algorithm}\label{Sub:ALM}

We now consider the optimization problem \eqref{Eq:GenP} under the given 
smoothness and convexity assumptions stated there (recall that $ D $ is not 
necessarily convex). This section presents a safeguarded augmented Lagrangian
approach for the solution of \eqref{Eq:GenP}. The method penalizes the 
constraints $ G(w) \in C $, but leaves the possibly complicated condition
$ w \in D $ explicitly in the constraints. Hence, the resulting subproblems
that have to be solved in the augmented Lagrangian framework have exactly 
the structure of the (simplified) optimization problems discussed in 
\cref{Sec:ProjGrad}.

To be specific, consider the (partially) augmented Lagrangian
\begin{equation}\label{eq:augmented_Lagrangian}
   \mathcal{L}_{\rho} (w, \lambda) :=
  f(w) + \frac{\rho}{2} d_C^2 \parens*{ G(w) + \frac{\lambda}{\rho} }
\end{equation}
of \eqref{Eq:GenP}, where $ \rho > 0 $ denotes the penalty parameter. Note that
the squared distance function of a nonempty, closed, and convex set is always continuously differentiable, 
see e.g.\ \cite[Corollary~12.30]{BauschkeCombettes2011}, 
which yields that $ \mathcal{L}_{\rho} ( \cdot, \lambda ) $
is a continuously differentiable mapping. Using the definition of the distance,
we can alternatively write this (partially) augmented Lagrangian as
\begin{equation*}
   \mathcal{L}_{\rho} (w, \lambda) =
  f(w) + \frac{\rho}{2} \norm*{ G(w) + \frac{\lambda}{\rho} -
   P_C \parens*{ G(w) + \frac{\lambda}{\rho} } }^2.
\end{equation*}
In order to control the update of the penalty parameter, we also introduce the 
auxiliary function
\begin{equation}
\label{eq:def_V}
   V_\rho(w, u) := \norm*{ G(w) -
   P_C \parens*{ G(w) + \frac{u}{\rho} } }
	 .
\end{equation}
This function $V_\rho$ can also be used to obtain a meaningful termination criterion,
see the discussion after \eqref{eq:termination} below.
The overall method is stated in \cref{Alg:ALM}.

\begin{algorithm2e}[htb]
	\SetAlgoLined
	\KwData{$ \rho_0 > 0$, $\beta > 1$, $\eta \in (0,1)$, $w^0 \in D $, nonempty and bounded set $U\subset\mathbb Y$
	}
	\For{$k \leftarrow 0$ \KwTo $\infty$}{
		\If{$w^k$ satisfies a termination criterion\label{item:termination_ALM}}{
				\Return $w^k$\;
		}
		{
		Choose $ u^k \in U$\; 
		Compute an approximately M-stationary point $w^{k+1}$ of the subproblem
    	\begin{equation*}
        	\min_w \ \mathcal{L}_{\rho_k} ( w, u^k ) \quad \text{s.t.} \quad w \in D,
    	\end{equation*}
		i.e., 
		for some suitable (sufficiently small) vector $ \varepsilon^{k+1} \in \mathbb{W} $,
		$w^{k+1}$ needs to satisfy	
		\label{item:solve:subproblem_ALM}
    	\begin{algomathdisplay}
       		\varepsilon^{k+1} \in \nabla_w \mathcal{L}_{\rho_k} (w^{k+1}, u^k) +
       		\mathcal N_D^\textup{lim} (w^{k+1}) 
    	\end{algomathdisplay}
    	}
    	{
 		Set 
 		\label{item:adjust_multiplier_ALM}%
		$
        		\lambda^{k+1} := \rho_k \bracks*{ G(w^{k+1}) + {u^k}/{\rho_k} -
        		P_C \parens*{ G(w^{k+1}) + {u^k}/{\rho_k} } }
						$\;
    	}
			\eIf{$k=0$ \KwOr $ V_{\rho_k}( w^{k+1}, u^k) \leq \eta V_{\rho_{k-1}}( w^k, u^{k-1})$\label{item:update_penalty_ALM}}{
    		$\rho_{k+1}:=\rho_k$\label{item:update_2}\;
    	}
    	{\label{item:update_3}
    		$\rho_{k+1}:=\beta\rho_k$\label{item:update_4}\;
    	}\label{item:update_5}	
	}
	\caption{Safeguarded Augmented Lagrangian Method for Geometric Constraints}
	\label{Alg:ALM}
\end{algorithm2e}

\Cref{item:solve:subproblem_ALM} of \cref{Alg:ALM}, in general, 
contains the main computational effort since we have
to ``solve'' a constrained nonlinear program at each iteration. Due to the 
nonconvexity of this subproblem, we only require to compute
an M-stationary point of this program. In fact, we allow the computation of 
an approximately M-stationary point, with the vector $ \varepsilon^{k+1} $
measuring the degree of inexactness. The choice $ \varepsilon^{k+1} = 0 $
corresponds to an exact M-stationary point. Note that the subproblems arising
in \cref{item:solve:subproblem_ALM}
have precisely the structure of the problem investigated in 
\cref{Sec:ProjGrad}, hence, the spectral gradient method discussed there
is a canonical candidate for the solution of these subproblems (note also that 
the objective function $ \mathcal{L}_{\rho_k} ( \cdot, u^k) $ is 
once, but usually not twice continuously differentiable).

Note that \cref{Alg:ALM} is called a safeguarded augmented Lagrangian
method due to the appearance of the auxiliary sequence 
	$ \{ u^k \}\subset U $ where $U$ is a bounded set. 
In fact,
if we would replace $ u^k $ by $ \lambda^k $ in \cref{item:solve:subproblem_ALM} of \cref{Alg:ALM}
(and the corresponding
subsequent formulas), we would obtain the classical augmented Lagrangian
method. However, the safeguarded version has superior global convergence properties,
see \cite{BirginMartinez2014} for a general discussion and \cite{KanzowSteck2017} for an
explicit (counter-) example. In practice, $ u^k $ is typically chosen to be
equal to $ \lambda^k $ as long as this vector belongs to the set
$U$, otherwise $ u^k $ is taken as the projection of
$ \lambda^k $ onto this set. In situations where $\mathbb Y$ is equipped with some
(partial) order relation
$\lesssim$, a typical choice for $U$ is given by the box 
$[u_{\min},u_{\max}]:=\set{u\in\mathbb Y \given u_\textup{min}\lesssim u\lesssim u_\textup{max}}$ where 
$u_\textup{min},u_\textup{max}\in\mathbb Y$ are given bounds satisfying 
$u_\textup{min}\lesssim u_\textup{max}$.

In order to understand the update of the Lagrange multiplier estimate in
\cref{item:adjust_multiplier_ALM} of \cref{Alg:ALM}, 
recall that the augmented Lagrangian is differentiable, with its derivative
given by
\begin{equation*}
   \nabla_w \mathcal{L}_{\rho} (w, \lambda ) = \nabla f(w) +
   \rho G'(w)^* \bracks*{ G(w) + \frac{\lambda}{\rho} - P_C \parens*{
   G(w) + \frac{\lambda}{\rho} } },
\end{equation*}
see \cite[Corollary~12.30]{BauschkeCombettes2011} again.
Hence, if we denote the usual (partial) Lagrangian of \eqref{Eq:GenP} by
\begin{equation*}
   \mathcal{L} (w, \lambda) :=f(w) + \innerp{\lambda}{G(w)},
\end{equation*}
we obtain from \cref{item:adjust_multiplier_ALM} that 
\begin{equation}\label{Eq:lambdaup}
   \nabla_w \mathcal{L}_{\rho_k} (w^{k+1}, u^k) = \nabla f(w^{k+1}) + G'(w^{k+1})^*
   \lambda^{k+1} = \nabla_w \mathcal{L}(w^{k+1}, \lambda^{k+1}) .
\end{equation}
This formula is actually the motivation for the precise update used in
\cref{item:adjust_multiplier_ALM}.

The particular updating rule in 
\cref{item:update_penalty_ALM,item:update_2,item:update_3,item:update_4,item:update_5} 
of \cref{Alg:ALM} is quite common, but other formulas might
also be possible.
In particular, one can use a different norm in the definition \eqref{eq:def_V} of $V_\rho$.
Exemplary, we exploited the maximum-norm for our experiments in \cref{Sec:Numerics}
where $\mathbb W$ is a space of real vectors or matrices.
Let us emphasize that increasing the penalty parameter $ \rho_k $
based on a pure infeasibility measure does not work in \cref{Alg:ALM}.
One usually has to take
into account both the infeasibility of the current iterate (w.r.t.\
the constraint $ G(w) \in C $) and a kind of complementarity condition 
(i.e., $\lambda\in\mathcal N_C(G(w))$).

	For the discussion of a suitable termination criterion, we define
	\begin{equation*}
		z^{k} := G(w^{k}) - P_C \parens*{ G(w^{k}) + \frac{u^{k-1}}{\rho_{k-1}} }.
	\end{equation*}
	Using \eqref{Eq:lambdaup} and the update formula for $\lambda^{k}$,
	\cref{Alg:ALM} ensures
	\begin{subequations}
		\label{eq:termination}
		\begin{align}
			\varepsilon^{k}
			&\in
			\nabla f(w^{k}) + G'(w^{k})^* \lambda^{k}
			+
			\mathcal N_D^\textup{lim} (w^{k}),
			\\
			\lambda^{k} &\in \mathcal N_C(G(w^{k}) - z^{k}),
		\end{align}
	\end{subequations}
	and this corresponds to the definition of AM-stationary points,
	see \cref{Def:AMStat}.
	Thus, it is reasonable
	to require $\varepsilon^{k} \to 0$
	and to use
	\begin{equation}
		\label{eq:stopping_ALM}
		\norm{z^{k}}
		=
		V_{\rho_{k-1}}(w^{k}, u^{k-1})
		\le
		\varepsilon_{\textup{tol}}
	\end{equation}
	for some $\varepsilon_{\textup{tol}}>0$
	as a termination criterion.
In practical implementations of \cref{Alg:ALM}, a maximum number of iterations should also
be incorporated into the termination criterion.

\subsection{Convergence}\label{Sub:ALMConvergence}

	Throughout our convergence analysis, we assume implicitly that \cref{Alg:ALM}
	does not stop after finitely many iterations.

Like all penalty-type methods in the setting of nonconvex programming, 
augmented Lagrangian methods suffer from the
drawback that they generate accumulation points which are not necessarily 
feasible for the given optimization problem~\eqref{Eq:GenP}. The following
(standard) result therefore presents some conditions under which it is guaranteed that
limit points are feasible.

\begin{proposition}\label{Prop:Feas1}
Each accumulation
point $ \bar w $ of a sequence $ \{ w^k \} $ generated
by \cref{Alg:ALM} is feasible for the optimization problem~\eqref{Eq:GenP}
if one of the following conditions holds:
\begin{enumerate}
 \item \label{item:rho_bounded} $ \{ \rho_k \} $ is bounded, or 
 \item \label{item:AL_bounded} there exists some $ B \in \mathbb{R} $ such that $ \mathcal{L}_{\rho_k}
    (w^{k+1}, u^k) \leq B $ holds for all $ k \in \mathbb{N} $.
\end{enumerate}
\end{proposition}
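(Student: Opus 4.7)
The plan is to reduce feasibility to two things: $\bar w \in D$ and $G(\bar w) \in C$, and treat the two halves separately. Membership in $D$ is free: each iterate $w^{k+1}$ solves (approximately) the subproblem over $D$, so $\{w^k\} \subset D$, and since $D$ is closed, every accumulation point lies in $D$. So the entire argument reduces to showing $d_C(G(\bar w)) = 0$, where $d_C$ is continuous.

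For case \eqref{item:rho_bounded}, I would use the fact that $\{\rho_k\}$ is monotonically non-decreasing by construction, so boundedness means it is eventually constant, say $\rho_k = \rho_\star$ for all $k \geq k_0$. Then the update rule in \cref{item:update_penalty_ALM} forces $V_{\rho_{k}}(w^{k+1}, u^{k}) \leq \eta\, V_{\rho_{k-1}}(w^{k}, u^{k-1})$ for all $k \geq k_0$, so $V_{\rho_k}(w^{k+1}, u^k) \to 0$ geometrically. The key inequality is then that, since $P_C(G(w^{k+1}) + u^k/\rho_k) \in C$ and $P_C(G(w^{k+1}))$ is the nearest point in $C$ to $G(w^{k+1})$,
\[
d_C\bigl(G(w^{k+1})\bigr)
\leq \norm[\big]{G(w^{k+1}) - P_C\bigl(G(w^{k+1}) + u^k/\rho_k\bigr)}
= V_{\rho_k}(w^{k+1}, u^k) \to 0.
\]
Passing to the subsequence with $w^{k+1} \to \bar w$ and using continuity of $G$ and $d_C$ yields $d_C(G(\bar w)) = 0$.

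For case \eqref{item:AL_bounded}, if $\{\rho_k\}$ happens to be bounded we are already in case \eqref{item:rho_bounded}, so I would assume $\rho_k \to \infty$ (which is forced since $\rho_k$ is non-decreasing). From the definition \eqref{eq:augmented_Lagrangian},
\[
\tfrac{\rho_k}{2}\, d_C^2\bigl(G(w^{k+1}) + u^k/\rho_k\bigr)
= \mathcal{L}_{\rho_k}(w^{k+1}, u^k) - f(w^{k+1})
\leq B - f(w^{k+1}).
\]
Along the subsequence with $w^{k+1} \to \bar w$, the right-hand side stays bounded by continuity of $f$; dividing by $\rho_k \to \infty$ gives $d_C(G(w^{k+1}) + u^k/\rho_k) \to 0$. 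Since $U$ is bounded, $u^k/\rho_k \to 0$, hence $G(w^{k+1}) + u^k/\rho_k \to G(\bar w)$ along the subsequence, and continuity of $d_C$ again yields $d_C(G(\bar w)) = 0$.

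I do not anticipate a serious obstacle here; the only delicate point is the elementary projection inequality in case \eqref{item:rho_bounded} that relates the pure infeasibility measure $d_C(G(w^{k+1}))$ to the safeguarded quantity $V_{\rho_k}(w^{k+1}, u^k)$, which ensures that the algorithm's infeasibility control (driven by a shifted projection) actually translates into genuine infeasibility going to zero.
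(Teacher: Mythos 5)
Your proposal is correct and follows essentially the same route as the paper's proof: in case (a) the update rule yields $V_{\rho_k}(w^{k+1},u^k)\to 0$ and the elementary bound $d_C(G(w^{k+1}))\le V_{\rho_k}(w^{k+1},u^k)$ transfers this to genuine infeasibility, while in case (b) one reduces to $\rho_k\to\infty$, rearranges the bound on $\mathcal{L}_{\rho_k}$, and uses boundedness of $\{u^k\}$ together with continuity of $d_C\circ G$. The only cosmetic difference is that you spell out the geometric decrease of $V$ and isolate $\bar w\in D$ up front, neither of which changes the substance.
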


\begin{proof}
	Let $ \bar w $ be an arbitrary accumulation point of $\{w^k\}$ and, say, $ \{ w^{k+1} \}_{K}$ a corresponding subsequence with $ w^{k+1}\to_K\bar w $. 
	
	We start with the proof under validity of condition~\ref{item:rho_bounded}.
	Since $ \{ \rho_k \} $ is bounded, \cref{item:update_penalty_ALM,item:update_2,item:update_3,item:update_4,item:update_5} 
	of \cref{Alg:ALM} imply that $ V_{\rho_k}(w^{k+1}, u^k) \to 0 $ for $ k \to \infty $. This implies 
\begin{equation*}
   d_C ( G(w^{k+1}) ) \leq \norm*{ G(w^{k+1}) - P_C \parens*{ G(w^{k+1}) +
   \frac{u^k}{\rho_k} } } = V_{\rho_k} ( w^{k+1}, u^k) \to 0.
\end{equation*}
A continuity argument
yields $ d_C ( G(\bar w) ) = 0 $. Since $ C $ is a closed set, this
implies $ G(\bar w) \in C $. Furthermore, by construction, we have $ w^{k+1} \in D $
for all $ k \in \mathbb{N} $, so that the closedness of $ D $ also yields $ \bar w \in D $.
Altogether, this shows that $ \bar w $ is feasible for the optimization problem
\eqref{Eq:GenP}.\smallskip 

Let us now proof the result in presence of~\ref{item:AL_bounded}. 
In view of~\ref{item:rho_bounded}, it suffices to consider the situation where $ \rho_k \to \infty $.
By assumption, we have
\begin{equation*}
  f(w^{k+1}) + \frac{\rho_k}{2} d_C^2 \parens*{ G(w^{k+1}) + \frac{u^k}{\rho_k} }
   \leq B \qquad \forall k \in \mathbb{N}.
\end{equation*}
Rearranging terms yields
\begin{equation}\label{Eq:Feas1}
   d_C^2 \parens*{ G(w^{k+1}) + \frac{u^k}{\rho_k} } \leq
   \frac{2 ( B - f(w^{k+1}) )}{\rho_k} \qquad \forall k \in \mathbb{N}.
\end{equation}
Taking the limit $ k \to_K \infty $
in \eqref{Eq:Feas1} and using the boundedness of $ \{ u^k \} $, we obtain
\begin{equation*}
   d_C^2 \parens[\big]{ G(\bar w) } = \lim_{k \to_K\infty} d_C^2 \parens*{ G(w^{k+1}) + \frac{u^k}{\rho_k} }
   = 0 
\end{equation*}
by a continuity argument.
Similar to part~\ref{item:rho_bounded}, this implies feasibility of $ \bar w $.
\end{proof}

The two conditions~\ref{item:rho_bounded} and~\ref{item:AL_bounded} of \cref{Prop:Feas1} are, of course,
difficult to check a priori. Nevertheless, in the situation where each iterate $ w^{k+1} $
is actually a global minimizer of the subproblem in \cref{item:solve:subproblem_ALM} of \cref{Alg:ALM}
and $ w $ denotes
any feasible point of the optimization problem \eqref{Eq:GenP}, we have
\begin{equation*}
    \mathcal{L}_{\rho_k} (w^{k+1}, u^k) 
    \leq 
    \mathcal{L}_{\rho_k} (w, u^k) 
    \leq 
    f(w)     + \frac{\norm{ u^k }^2}{2 \rho_k} 
    \leq 
    f(w) + \frac{\norm{ u^k }^2}{2 \rho_0}\leq B
\end{equation*}
for some suitable constant $ B $ due to the boundedness of the sequence $ \{ u^k \} $.
The same argument also works if $ w^{k+1} $ is only an inexact global minimizer.

The next result shows that, even in the case where a limit point is not 
necessarily feasible, it still contains some useful information in the sense
that it is at least a stationary point for the constraint violation. In general,
this is the best that one can expect.

\begin{proposition}\label{Prop:MStatInf}
Suppose that the sequence $ \{ \varepsilon^k \} $ in \cref{Alg:ALM} is 
bounded. Then each accumulation
point $ \bar w $ of a sequence $ \{ w^k \} $ generated
by \cref{Alg:ALM} is an M-stationary point of the so-called feasibility problem
\begin{equation}\label{Eq:FeasP}
   \min_w \tfrac{1}{2} d_C^2 ( G(w) ) \quad \text{s.t.} \quad w \in D.
\end{equation}
\end{proposition}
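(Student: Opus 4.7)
The plan is to split into the two natural cases based on the behavior of the penalty sequence $\{\rho_k\}$. Let $\bar w$ be an accumulation point and let $K\subset\mathbb N$ be a subsequence with $w^{k+1}\to_K\bar w$. Since each $w^{k+1}\in D$ and $D$ is closed, we have $\bar w\in D$, so feasibility for the feasibility problem~\eqref{Eq:FeasP} is automatic. The M-stationarity condition for \eqref{Eq:FeasP} at $\bar w$ reads
\[
0 \in G'(\bar w)^*\bigl[G(\bar w)-P_C(G(\bar w))\bigr] + \mathcal N_D^\textup{lim}(\bar w),
\]
using the chain rule together with $\nabla_y\tfrac12 d_C^2(y)=y-P_C(y)$ (see \cite[Corollary~12.30]{BauschkeCombettes2011}, already invoked in \cref{Sub:ALM}).

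If $\{\rho_k\}$ is bounded, \cref{Prop:Feas1}\ref{item:rho_bounded} yields $G(\bar w)\in C$, hence $G(\bar w)-P_C(G(\bar w))=0$ and the condition above is satisfied trivially with zero multiplier. The substantive case is $\rho_k\to\infty$. Here, I would start from the inexact stationarity relation
\[
\varepsilon^{k+1}\in\nabla f(w^{k+1})+G'(w^{k+1})^*\lambda^{k+1}+\mathcal N_D^\textup{lim}(w^{k+1}),
\]
which follows from \eqref{Eq:lambdaup} and the update rule for $\lambda^{k+1}$ in \cref{item:adjust_multiplier_ALM}. Dividing by $\rho_k$ and exploiting that $\mathcal N_D^\textup{lim}(w^{k+1})$ is a cone so that scaling leaves it invariant, I obtain
\[
\frac{\varepsilon^{k+1}-\nabla f(w^{k+1})}{\rho_k}-G'(w^{k+1})^*\frac{\lambda^{k+1}}{\rho_k}\in\mathcal N_D^\textup{lim}(w^{k+1}).
\]

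Next, I would identify the limit along $K$ of each piece. The left-hand contribution $(\varepsilon^{k+1}-\nabla f(w^{k+1}))/\rho_k$ vanishes, since $\{\varepsilon^k\}$ is bounded by assumption, $\nabla f$ is continuous, and $\rho_k\to\infty$. For the multiplier quotient, the definition of $\lambda^{k+1}$ gives
\[
\frac{\lambda^{k+1}}{\rho_k}=G(w^{k+1})+\frac{u^k}{\rho_k}-P_C\!\left(G(w^{k+1})+\frac{u^k}{\rho_k}\right),
\]
and since $\{u^k\}\subset U$ is bounded, $u^k/\rho_k\to 0$, so continuity of $G$ and of $P_C$ yields $\lambda^{k+1}/\rho_k\to_K G(\bar w)-P_C(G(\bar w))$. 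Continuity of $G'$ then gives convergence of $G'(w^{k+1})^*\lambda^{k+1}/\rho_k$ to $G'(\bar w)^*[G(\bar w)-P_C(G(\bar w))]$.

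The main (and only) delicate step is taking the set-valued limit in $\mathcal N_D^\textup{lim}(w^{k+1})$. For this I would invoke the stability property \eqref{Eq:stability}, i.e.\ the outer semicontinuity of $w\mapsto\mathcal N_D^\textup{lim}(w)$ at $\bar w$: combining convergence of the left-hand side with outer semicontinuity yields $-G'(\bar w)^*[G(\bar w)-P_C(G(\bar w))]\in\mathcal N_D^\textup{lim}(\bar w)$, which is exactly the desired M-stationarity condition for \eqref{Eq:FeasP}. This completes the proof.
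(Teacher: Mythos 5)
Your proof is correct and follows essentially the same route as the paper: split on whether $\{\rho_k\}$ is bounded, handle the bounded case via \cref{Prop:Feas1}, and in the unbounded case divide the inexact stationarity inclusion by $\rho_k$, use the cone property of $\mathcal N_D^\textup{lim}$, and pass to the limit via outer semicontinuity. The only cosmetic difference is in the bounded case, where you observe directly that $G(\bar w)\in C$ forces the gradient of $\tfrac12 d_C^2\circ G$ to vanish, while the paper notes that $\bar w$ is then a global minimizer of \eqref{Eq:FeasP} and hence M-stationary; both are immediate.
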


\begin{proof}
In view of \cref{Prop:Feas1}, if $ \{ \rho_k \} $ is bounded,
then each accumulation point is a global minimum of the feasibility problem
\eqref{Eq:FeasP} and, therefore, an M-stationary point of this problem.

Hence, it remains to consider the case where $ \{ \rho_k \} $ is unbounded,
i.e.,
we have $ \rho_k \to \infty $ as $ k \to \infty $. 
In view of \cref{item:solve:subproblem_ALM,item:adjust_multiplier_ALM} of \cref{Alg:ALM}, see
also \eqref{Eq:lambdaup}, we have
\begin{equation*}
   \varepsilon^{k+1} \in \nabla f(w^{k+1}) + G'(w^{k+1})^* \lambda^{k+1} +
   \mathcal N_D^\textup{lim} (w^{k+1})
\end{equation*}
with $\lambda^{k+1}$ as in \cref{item:adjust_multiplier_ALM}.
Dividing this inclusion by $ \rho_k $ and using the fact that
$ \mathcal N_D^\textup{lim} (w^{k+1}) $ is a cone, we therefore get 
\begin{equation*}
   \frac{\varepsilon^{k+1}}{\rho_k} \in 
   \frac{\nabla f(w^{k+1})}{\rho_k} + G'(w^{k+1})^* 
   \bracks*{ G(w^{k+1}) + \frac{u^k}{\rho_k} - P_C \parens*{ G(w^{k+1}) + \frac{u^k}{\rho_k} 
   } } + \mathcal N_D^\textup{lim} (w^{k+1}).
\end{equation*}
Now, let $ \bar w $ be an accumulation point and $ \{ w^{k+1} \}_K $
be a subsequence satisfying $w^{k+1}\to_K \bar w $.
Then the sequences $ \{ \varepsilon^{k+1} \}_K$, $\{ u^k \}_K $, and
$ \{ \nabla f(w^{k+1}) \}_K $ are bounded. Thus, taking the limit
$ k \to_K \infty $ yields
\begin{equation*}
   0 \in G'(\bar w)^* \bracks[\big]{ G(\bar w) - P_C ( G(\bar w) ) } 
   + 
   \mathcal N_D^\textup{lim} (\bar w)
\end{equation*}
by the outer semicontinuity of the limiting normal cone. Since 
we also have $ \bar w \in D $ and due to 
\begin{equation*}
   \nabla \parens[\big]{ \tfrac12 d_C^2 \circ  G}(\bar w) = G'(\bar w)^* \bracks[\big]{ G(\bar w) - P_C ( G(\bar w) ) },
\end{equation*}
see, once more, \cite[Corollary~12.30]{BauschkeCombettes2011},
it follows that $ \bar w $ is an M-stationary point of the feasibility problem
\eqref{Eq:FeasP}.
\end{proof}

We next investigate suitable properties of feasible limit points. The following
may be viewed as the main observation in that respect and shows that any
such accumulation point is automatically an AM-stationary point in the sense of 
\cref{Def:AMStat}.

\begin{theorem}\label{Thm:Conv}
Suppose that the sequence $ \{ \varepsilon^k \} $ in \cref{Alg:ALM} satisfies
$ \varepsilon^k \to 0 $. Then each feasible accumulation
point $ \bar w $ of a sequence $ \{ w^k \} $ generated
by \cref{Alg:ALM} is an AM-stationary point.
\end{theorem}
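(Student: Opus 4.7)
My plan is to unpack the definition of AM-stationarity (\cref{Def:AMStat}) and extract the required four sequences directly from the data produced by \cref{Alg:ALM}, restricting to the subsequence along which $w^{k+1}\to_K\bar w$. The iterates $\{w^{k+1}\}_K$ and the inexactness vectors $\{\varepsilon^{k+1}\}_K$ play the roles of the $w^k$ and $\varepsilon^k$ of \cref{Def:AMStat}, respectively, with $\varepsilon^{k+1}\to 0$ by assumption. The multipliers $\{\lambda^{k+1}\}_K$ from \cref{item:adjust_multiplier_ALM} of \cref{Alg:ALM} will serve as the Lagrange multiplier estimates, and combined with the identity \eqref{Eq:lambdaup} they immediately yield the inclusion
\[
	\varepsilon^{k+1}\in\nabla f(w^{k+1})+G'(w^{k+1})^*\lambda^{k+1}+\mathcal N_D^\textup{lim}(w^{k+1}).
\]
This takes care of the first AM-stationarity inclusion.

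The second inclusion is where the work sits. The natural candidate for the perturbation is
\[
	z^{k+1}:=G(w^{k+1})-P_C\!\left(G(w^{k+1})+\tfrac{u^k}{\rho_k}\right),
\]
so that $G(w^{k+1})-z^{k+1}=P_C(G(w^{k+1})+u^k/\rho_k)\in C$. By the projection characterization for the closed, convex set $C$, we have $(G(w^{k+1})+u^k/\rho_k)-P_C(G(w^{k+1})+u^k/\rho_k)\in\mathcal N_C(P_C(G(w^{k+1})+u^k/\rho_k))$, and since $\mathcal N_C(\cdot)$ is a cone, multiplying by $\rho_k>0$ yields $\lambda^{k+1}\in\mathcal N_C(G(w^{k+1})-z^{k+1})$. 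What remains is to verify $z^{k+1}\to_K 0$.

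For this final step I split into two cases, which I expect to be the main (and only real) obstacle. If the penalty parameters $\{\rho_k\}$ stay bounded, then the update rule in \cref{item:update_penalty_ALM,item:update_2,item:update_3,item:update_4,item:update_5} of \cref{Alg:ALM} forces $V_{\rho_k}(w^{k+1},u^k)\to 0$, and by \eqref{eq:def_V} this quantity is exactly $\|z^{k+1}\|$, so $z^{k+1}\to 0$. If instead $\rho_k\to\infty$, the boundedness of $\{u^k\}\subset U$ gives $u^k/\rho_k\to 0$, so $G(w^{k+1})+u^k/\rho_k\to_K G(\bar w)$; feasibility of $\bar w$ means $G(\bar w)\in C$ and hence $P_C(G(\bar w))=G(\bar w)$, and continuity of the projection onto the closed, convex set $C$ yields $P_C(G(w^{k+1})+u^k/\rho_k)\to_K G(\bar w)$. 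Therefore $z^{k+1}\to_K 0$ in this case as well. Combining the two cases, the four sequences $\{w^{k+1}\}_K$, $\{\varepsilon^{k+1}\}_K$, $\{\lambda^{k+1}\}_K$, $\{z^{k+1}\}_K$ (re-indexed over $\N$ along $K$) witness AM-stationarity of $\bar w$.
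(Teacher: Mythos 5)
Your proposal is correct and follows essentially the same route as the paper's proof: the same choice of $z^{k+1}$, the same use of \eqref{Eq:lambdaup} and the projection--normal-cone relation to obtain $\lambda^{k+1}\in\mathcal N_C(G(w^{k+1})-z^{k+1})$, and the same two-case argument (bounded $\rho_k$ via the update rule for $V_{\rho_k}$, unbounded $\rho_k$ via boundedness of $\{u^k\}$, continuity of $P_C$, and feasibility of $\bar w$) to conclude $z^{k+1}\to_K 0$.
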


\begin{proof}
Let $ \{ w^{k+1} \}_{K}$ denote a subsequence such that $ w^{k+1}\to_K\bar w $.
Define 
\begin{equation*}
   s^{k+1} := P_C \parens*{ G(w^{k+1} ) + \frac{u^k}{\rho_k} } \quad \text{and} \quad 
   z^{k+1} := G ( w^{k+1} ) - s^{k+1}
\end{equation*}
for each $ k \in \mathbb{N} $. We claim that the four (sub-) sequences
$ \{ w^{k+1} \}_K, \{ z^{k+1} \}_K$, $\{ \varepsilon^{k+1} \}_K $, and 
$ \{ \lambda^{k+1} \}_K $ generated by \cref{Alg:ALM} or defined in
the above way satisfy the properties from \cref{Def:AMStat} and
therefore show that $ \bar w $ is an AM-stationary point. By construction, we have
$w^{k+1}\to_K \bar w $ and $\varepsilon^{k+1}\to_K 0 $.
Further, from \cref{item:solve:subproblem_ALM} of \cref{Alg:ALM} and \eqref{Eq:lambdaup}, we obtain
\begin{equation*}
   \varepsilon^{k+1} \in \nabla_w \mathcal{L}_{\rho_k} (w^{k+1}, u^k) +
   \mathcal N_D^\textup{lim} (w^{k+1}) = \nabla f(w^{k+1}) + G'(w^{k+1})^*
   \lambda^{k+1} + \mathcal N_D^\textup{lim} (w^{k+1}).
\end{equation*}
Since $\mathcal N_C(s^{k+1})$ is a cone,
the relation between $P_C$ and $\mathcal N_C$
together with the
definitions of $ s^{k+1}, \lambda^{k+1} $, 
and $ z^{k+1} $ yield
\begin{equation*}
   \lambda^{k+1} = \rho_k \bracks*{  G(w^{k+1} ) + \frac{u^k}{\rho_k} - s^{k+1} }
   \in \mathcal{N}_C ( s^{k+1} ) = \mathcal{N}_C ( G(w^{k+1}) - z^{k+1} ) . 
\end{equation*}
Hence, it remains to show $z^{k+1}\to_K 0 $. To this end, we consider
two cases, namely whether $ \{ \rho_k \} $ stays bounded or is unbounded.
In the bounded case, \cref{item:update_penalty_ALM,item:update_2,item:update_3,item:update_4,item:update_5} 
of \cref{Alg:ALM} imply that $ V_{\rho_k}(w^{k+1}, u^k) \to 0 $ for
$ k \to \infty $. The corresponding definitions therefore yield
\begin{equation*}
   \norm{z^{k+1}} 
   = 
   \norm{G(w^{k+1}) - s^{k+1}}
   =
   V_{\rho_k}(w^{k+1}, u^k) \to 0 \quad \text{for }
   k \to_K \infty.
\end{equation*}
On the other hand, if $ \{ \rho_k \} $ is unbounded, we have $ \rho_k \to \infty $. 
Since $ \{ u^k \} $ is bounded by construction, the 
continuity of the projection operator together with the assumed
feasibility of $ \bar w $ implies
\begin{equation*}
   s^{k+1} = P_C \parens*{ G(w^{k+1} ) + \frac{u^k}{\rho_k} } \to P_C ( G (\bar w) )
   =G(\bar w)
   \quad \text{for } k \to_K \infty .
\end{equation*}
Consequently, we obtain $ z^{k+1} = G (w^{k+1}) - s^{k+1} \to_K 0 $ also in this case.
Altogether, this implies that $ \bar w $ is AM-stationary.
\end{proof}
	We point out that the proof of \cref{Thm:Conv} even shows the convergence	
	$\norm{z^{k+1}} 
	=
	V_{\rho_k}(w^{k+1}, u^k) \to_K 0$,
	i.e., the stopping criterion \eqref{eq:stopping_ALM}
	will be satisfied after finitely many steps.

Recalling that, by definition, each AM-stationary point of \eqref{Eq:GenP} which is AM-regular
must already be M-stationary, we obtain the following corollary.

\begin{corollary}\label{cor:ALM_yields_MSstationary_point}
	Suppose that the sequence $\{\varepsilon^k\}$ in \cref{Alg:ALM} satisfies
	$\varepsilon^k\to 0$. Then each feasible and AM-regular accumulation point $\bar w$ of a
	sequence $\{w^k\}$ generated by \cref{Alg:ALM} is an M-stationary
	point. 
\end{corollary}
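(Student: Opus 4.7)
The plan is to derive the corollary directly from \cref{Thm:Conv} together with the definition of AM-regularity, observing that the corollary is essentially a one-line consequence once these two pieces are in place. The only real work is to repackage the AM-stationarity sequences so that they fit precisely into the outer-limit definition of the set-valued mapping $\mathcal M$ from \cref{Def:AsymptoticRegularity}.

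Concretely, I would first apply \cref{Thm:Conv} to $\bar w$ in order to obtain sequences $\{w^{k+1}\}_K,\{\varepsilon^{k+1}\}_K,\{z^{k+1}\}_K\subset\mathbb W\cup\mathbb Y$ and multipliers $\{\lambda^{k+1}\}_K\subset\mathbb Y$ such that $w^{k+1}\to_K\bar w$, $\varepsilon^{k+1}\to_K 0$, $z^{k+1}\to_K 0$, together with the inclusions
\[
    \varepsilon^{k+1}-\nabla f(w^{k+1})\in G'(w^{k+1})^*\lambda^{k+1}+\mathcal N_D^\textup{lim}(w^{k+1}),
    \quad
    \lambda^{k+1}\in\mathcal N_C\parens[\big]{G(w^{k+1})-z^{k+1}}.
\]
Setting $\xi^{k+1}:=\varepsilon^{k+1}-\nabla f(w^{k+1})$, the two inclusions combine to $\xi^{k+1}\in\mathcal M(w^{k+1},z^{k+1})$ for each $k\in K$.

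Next, I would take the limit along $K$. By continuity of $\nabla f$ we have $\xi^{k+1}\to_K-\nabla f(\bar w)$, and from $w^{k+1}\to_K\bar w$, $z^{k+1}\to_K 0$, the definition of the outer limit yields $-\nabla f(\bar w)\in\limsup_{w\to\bar w,\,z\to 0}\mathcal M(w,z)$. Exploiting AM-regularity of $\bar w$ (\cref{Def:AsymptoticRegularity}) then gives $-\nabla f(\bar w)\in\mathcal M(\bar w,0)=G'(\bar w)^*\mathcal N_C(G(\bar w))+\mathcal N_D^\textup{lim}(\bar w)$, which unwinds into the existence of a multiplier $\lambda\in\mathcal N_C(G(\bar w))$ and a vector $\eta\in\mathcal N_D^\textup{lim}(\bar w)$ satisfying $0=\nabla f(\bar w)+G'(\bar w)^*\lambda+\eta$. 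This is precisely the M-stationarity condition from \cref{Def:MStat}.

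I do not expect a real obstacle here: feasibility of $\bar w$ is needed only to invoke \cref{Thm:Conv}, and AM-regularity does exactly what it was designed to do, namely bridge AM-stationarity and M-stationarity. The only minor bookkeeping point is to confirm that the sequence $\xi^{k+1}$ indeed lies in $\mathcal M(w^{k+1},z^{k+1})$ in the precise form demanded by \cref{Def:AsymptoticRegularity}, which is immediate from the product structure $G'(w)^*\mathcal N_C(G(w)-z)+\mathcal N_D^\textup{lim}(w)$ of $\mathcal M$ and the two inclusions produced by \cref{Thm:Conv}.
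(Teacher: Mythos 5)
Your proposal is correct and follows the same route as the paper: the paper's proof of this corollary is precisely the one-line observation that, by \cref{Thm:Conv}, the point $\bar w$ is AM-stationary, and an AM-stationary point which is AM-regular is M-stationary by the very definition of AM-regularity. Your unwinding of the outer-limit argument (placing $-\nabla f(\bar w)$ in $\limsup_{w\to\bar w,\,z\to 0}\mathcal M(w,z)$ and then in $\mathcal M(\bar w,0)$) simply makes explicit the bridge that the paper takes for granted from \cite{Mehlitz2020}.
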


Keeping our discussions after \cref{lem:AM_regularity_in_disjunctive_setting} in mind,
this result generalizes \cite[Theorem~3]{GuoDeng2021} which addresses a similar MPCC-tailored
augmented Lagrangian method and exploits MPCC-RCPLD.

\section{Realizations}\label{Sec:Realizations}

Let $k$ be a fixed iteration of \cref{Alg:ALM}.
For the (approximate) solution of the ALM-subproblem in \cref{item:solve:subproblem_ALM} of \cref{Alg:ALM},
we may use \cref{Alg:NPG}.
Recall that, given an outer iteration $j$ of \cref{Alg:NPG}, we need to solve the subproblem
\begin{equation*}
   \min_w \ \mathcal L_{\rho_k}(w^j,u^k) 
	 + \innerp{ \nabla_w \mathcal L_{\rho_k}(w^j,u^k) }{ w-w^j } + \frac{\gamma_{j,i}}{2} \norm{ w - w^j }^2 \quad
   \text{s.t.} \quad w \in D
\end{equation*}
with some given $ w^j $ and $ \gamma_{j,i} > 0 $ in the inner iteration $i$ of \cref{Alg:NPG}.
As pointed out in \cref{Sec:ProjGrad}, the above problem possesses the same solutions as
\begin{equation*}
   \min_w \ 
   		\norm*{
   			w - \parens*{ w^j - \frac{1}{\gamma_{j,i}} \nabla_w\mathcal L_{\rho_k}(w^j,u^k) }
   		} ^2
   \quad \text{s.t.} \quad w \in D,
\end{equation*}
i.e., we need to be able to compute elements of the (possibly multi-valued) projection 
$ \Pi_D \parens[\big]{ w^j - \frac{1}{\gamma_{j,i}} \nabla _w\mathcal L_{\rho_k}(w^j,u^k) } $.
Boiling this requirement down to its essentials, we have to be
in position to find projections of arbitrary points onto the set $ D $ in an efficient way.
Subsequently, this will be discussed in the context of several practically relevant settings.

\subsection{The Disjunctive Programming Case}

We consider \eqref{Eq:GenP} in the special \cref{set:disjunctive_programs_with_bi_disjunction}
with $\mathbb X:=\R^n$ and $X:=[\ell,u]$ where $\ell,u\in\R^n$ satisfy $-\infty\leq\ell_i<u_i\leq\infty$
for $i=1,\ldots,n$. Recall that the set $D$ is given by
\begin{equation}\label{eq:D_bi_disjunctive}
	D
	=
	\set{(x,y,z) \in \R^n \times \R^{m_3} \times \R^{m_3}  \given
		x\in[\ell,u],\,(y_i,z_i)\in T\quad\forall i\in\{1,\ldots,m_3}
	\}
\end{equation}
in this situation.
For given $\bar w=(\bar x,\bar y,\bar z)\in\R^n\times\R^{m_3}\times\R^{m_3}$, we want to characterize
the elements of $\Pi_D(\bar w)$. Therefore, we consider the optimization problem
\begin{equation}\label{eq:disjunctive_proj}
    \min_w \ \tfrac{1}{2} \norm{ w - \bar w }^2   \quad \text{s.t.} \quad w = (x,y,z) \in D.
\end{equation}
We observe that the latter can be decomposed into the $n$ one-dimensional optimization
problems
\begin{equation*}\label{eq:interval_proj}
   \min\limits_{x_i} \ \tfrac{1}{2} ( x_i - \bar x_i )^2 \quad \text{s.t.} \quad x_i \in [\ell_i,u_i],
\end{equation*}
$i=1,\ldots,n$, possessing the respective solution $P_{[\ell_i,u_i]}(\bar x_i)$, as well as 
into $m_3$ two-dimensional optimization problems
\begin{equation}\label{eq:bi_disjunctive_proj}
   \min\limits_{y_i,z_i} \ \tfrac{1}{2} ( y_i - \bar y_i )^2 + \tfrac{1}{2} ( z_i - \bar z_i )^2\quad \text{s.t.} 
   \quad (y_i,z_i)\in T,
\end{equation}
$i=1,\ldots,m_3$.
Due to $T=T_1\cup T_2$, 
each of these problems on its own can be decomposed into
the two two-dimensional subproblems
\begin{equation}\label{eq:bi_disjunctive_proj_sur}\tag{$R(i,j)$}
   \min\limits_{y_i,z_i}\ \tfrac{1}{2} ( y_i - \bar y_i )^2 + \tfrac{1}{2} ( z_i - \bar z_i )^2\quad \text{s.t.} 
   \quad (y_i,z_i)\in T_j,
\end{equation}
$j=1,2$. In most of the popular settings from disjunctive programming, \eqref{eq:bi_disjunctive_proj_sur}
can be solved with ease. By a simple comparison of the associated objective function values,
we find the solutions of \eqref{eq:bi_disjunctive_proj}. Putting the solutions of the subproblems
together, we find the solutions of \eqref{eq:disjunctive_proj}, i.e., the elements of $\Pi_D(\bar w)$.

In the remainder of this section, we consider a particularly interesting instance of this setting
where $T$ is given by
\begin{equation}\label{eq:BoxSwitching}
	T:=\set{(s,t) \given s\in[\sigma_1,\sigma_2],\,t\in[\tau_1,\tau_2],\,st=0}.
\end{equation}
Here, $-\infty\leq\sigma_1,\tau_1\leq 0$ and $0<\sigma_2,\tau_2\leq\infty$ are given constants.
Particularly, we find the decomposition
\[
	T_1:=[\sigma_1,\sigma_2]\times\{0\},\qquad T_2:=\{0\}\times[\tau_1,\tau_2]
\]
of $T$ in this case. Due to the geometrical shape of the set $T$, one might be tempted to refer to
this setting as ``box-switching constraints''.
Note that it particularly covers
\begin{itemize}
	\item switching constraints ($\sigma_1=\tau_1:=-\infty$, $\sigma_2=\tau_2:=\infty$),
		see \cite{KanzowMehlitzSteck2019,Mehlitz2020a},
	\item complementarity constraints ($\sigma_1=\tau_1:=0$, $\sigma_2=\tau_2:=\infty$),
		see \cite{LuoPangRalph1996,OutrataKocvaraZowe1998}, and
	\item relaxed reformulated cardinality constraints ($\sigma_1:=-\infty$, $\sigma_2:=\infty$,
		$\tau_1:=0$, $\tau_2:=1$), see \cite{BurdakovKanzowSchwartz2016,CervinkaKanzowSchwartz2016}.
\end{itemize}
We refer the reader to \cref{fig:BoxSwitching} for a visualization of these types of constraints.

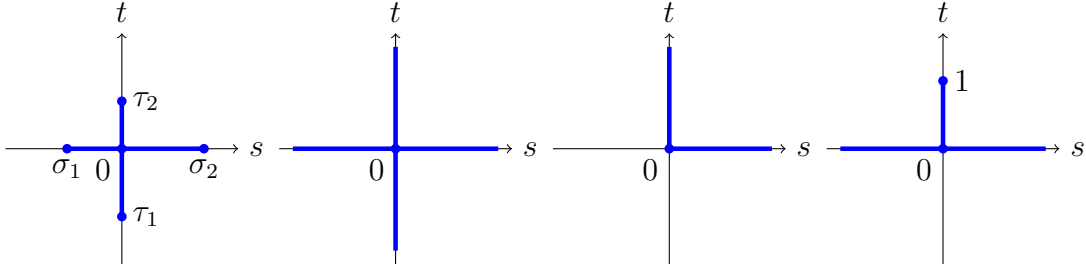
\begin{figure}[htp]\centering
 \begin{tikzpicture}[scale=0.9]
  \draw[->] (-1.7,0) -- (1.7,0) node[right] {$s$};
  \draw[->] (0,-1.7) -- (0,1.7) node[above] {$t$};  
  \draw[very thick, color = blue] (-0.8,0.01) -- (1.2,0.01);
  \draw[very thick, color = blue] (-0.8,-0.01) -- (1.2,-0.01);
  \draw[very thick, color = blue] (0.01,-1.0) -- (0.01,0.7);
  \draw[very thick, color = blue] (-0.01,-1.0) -- (-0.01,0.7);
  \fill[color=blue] (0,0) circle [radius=2pt];
  \fill[color=blue] (-0.8,0) circle [radius=2pt];
  \fill[color=blue] (1.2,0) circle [radius=2pt];
  \fill[color=blue] (0,0.7) circle [radius=2pt];
  \fill[color=blue] (0,-1) circle [radius=2pt];
  \node[below] at (-0.8,0){$\sigma_1$};
  \node[below] at (1.2,0){$\sigma_2$};
  \node[right] at (0,-1){$\tau_1$};
  \node[right] at (0,0.7){$\tau_2$};  
  \node[below left] at (0,0){$0$};
  \draw[->] (2.3,0) -- (5.7,0) node[right] {$s$};
  \draw[->] (4.0,-1.7) -- (4.0,1.7) node[above] {$t$};  
  \draw[very thick, color = blue] (2.5,0.01) -- (5.5,0.01);
  \draw[very thick, color = blue] (2.5,-0.01) -- (5.5,-0.01);
  \draw[very thick, color = blue] (4.01,-1.5) -- (4.01,1.5);
  \draw[very thick, color = blue] (3.99,-1.5) -- (3.99,1.5);
  \fill[color=blue] (4,0) circle [radius=2pt];
  \node[below left] at (4,0){$0$};
  \draw[->] (6.3,0) -- (9.7,0) node[right] {$s$};
  \draw[->] (8.0,-1.7) -- (8.0,1.7) node[above] {$t$};  
  \draw[very thick, color = blue] (7.99,0) -- (7.99,1.5);
  \draw[very thick, color = blue] (8.01,0) -- (8.01,1.5);
  \draw[very thick, color = blue] (8,-0.01) -- (9.5,-0.01);
  \draw[very thick, color = blue] (8,0.01) -- (9.5,0.01);
  \fill[color=blue] (8,0) circle [radius=2pt];
  \node[below left] at (8,0){$0$};
   \draw[->] (10.3,0) -- (13.7,0) node[right] {$s$};
  \draw[->] (12.0,-1.7) -- (12.0,1.7) node[above] {$t$};  
  \draw[very thick, color = blue] (10.5,0.01) -- (13.5,0.01);
  \draw[very thick, color = blue] (10.5,-0.01) -- (13.5,-0.01);
  \draw[very thick, color = blue] (12.01,0) -- (12.01,1);
  \draw[very thick, color = blue] (11.99,0) -- (11.99,1);
  \fill[color=blue] (12,0) circle [radius=2pt];
  \node[below left] at (12,0){$0$};
  \fill[color=blue] (12,1) circle [radius=2pt];
  \node[right] at (12,1){$1$};
 \end{tikzpicture}
\caption{Geometric illustrations of box-switching, switching, complementarity, and
relaxed reformulated cardinality constraints (from left to right), respectively.}
\label{fig:BoxSwitching}
\end{figure}

One can easily check that the solutions of 
\hyperref[eq:bi_disjunctive_proj_sur]{(R$(i,1)$)}
and
\hyperref[eq:bi_disjunctive_proj_sur]{(R$(i,2)$)}
are given by 
$(P_{[\sigma_1,\sigma_2]}(\bar y_i),0)$ and $(0,P_{[\tau_1,\tau_2]}(\bar z_i))$, respectively.
This yields the following result.

\begin{proposition}\label{Prop:ProjBoxSwitching}
Consider the set $D$ from \eqref{eq:D_bi_disjunctive} where $T$ is
given as in \eqref{eq:BoxSwitching}.
For given 
$ \bar w = ( \bar x, \bar y, \bar z ) \in\R^n\times\R^{m_3}\times\R^{m_3}$,
we have
$\hat w:= (\hat x,\hat y,\hat z)\in \Pi_D(\bar w)$ if and only if
$\hat x=P_{[\ell,u]}(\bar x)$ and
\begin{equation*}
	(\hat y_i,\hat z_i)
	\in
	\begin{cases}
		\{(P_{[\sigma_1,\sigma_2]}(\bar y_i),0)\}
			&\text{if }\phi_s(\bar y_i,\bar z_i)<\phi_t(\bar y_i,\bar z_i),\\
		\{(0,P_{[\tau_1,\tau_2]}(\bar z_i))\}
			&\text{if }\phi_s(\bar y_i,\bar z_i)>\phi_t(\bar y_i,\bar z_i),\\
		\{(P_{[\sigma_1,\sigma_2]}(\bar y_i),0),(0,P_{[\tau_1,\tau_2]}(\bar z_i))\}
			&\text{if }\phi_s(\bar y_i,\bar z_i)=\phi_t(\bar y_i,\bar z_i)
	\end{cases}
\end{equation*}
for all $i=1,\ldots,m_3$, where we used
\begin{align*}
	\phi_s(a,b):=(P_{[\sigma_1,\sigma_2]}(a)-a)^2+b^2,
	\qquad
	\phi_t(a,b):=a^2+(P_{[\tau_1,\tau_2]}(b)-b)^2.
\end{align*}
\end{proposition}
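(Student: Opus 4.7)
My plan is to exploit the decomposition of the projection problem that is already described in the paragraph preceding the proposition, and then carry out the two-dimensional comparison over $T = T_1 \cup T_2$ explicitly for the particular $T$ given in \eqref{eq:BoxSwitching}.

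First, I would invoke the separability observation: since $D$ is a product of the box $[\ell,u]$ and the sets $\{(y_i,z_i) \in T\}$ for $i=1,\dots,m_3$, the minimization problem \eqref{eq:disjunctive_proj} splits into $n$ one-dimensional problems in the variables $\bar x_i$ (whose unique minimizers are $P_{[\ell_i,u_i]}(\bar x_i)$, so that $\hat x = P_{[\ell,u]}(\bar x)$) and $m_3$ independent two-dimensional problems of the form \eqref{eq:bi_disjunctive_proj}. For each such two-dimensional problem, I would then use $T = T_1 \cup T_2$ to reduce it further to the two subproblems $(R(i,1))$ and $(R(i,2))$ and take the minimizer(s) with the smaller objective value, so that the entire proof reduces to analyzing one fixed two-dimensional index $i$.

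Next, I would handle the two subproblems $(R(i,1))$ and $(R(i,2))$ separately. For $(R(i,1))$, since the constraint $(y_i,z_i) \in T_1 = [\sigma_1,\sigma_2] \times \{0\}$ forces $z_i = 0$, the problem reduces to $\min \tfrac12(y_i - \bar y_i)^2$ subject to $y_i \in [\sigma_1,\sigma_2]$, with unique minimizer $y_i = P_{[\sigma_1,\sigma_2]}(\bar y_i)$ and attained value $\tfrac12\bigl[(P_{[\sigma_1,\sigma_2]}(\bar y_i)-\bar y_i)^2 + \bar z_i^2\bigr] = \tfrac12 \phi_s(\bar y_i,\bar z_i)$. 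Symmetrically, $(R(i,2))$ has the unique minimizer $(0,P_{[\tau_1,\tau_2]}(\bar z_i))$ with optimal value $\tfrac12\phi_t(\bar y_i,\bar z_i)$.

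Finally, I would conclude by comparing these two optimal values: the projection $\Pi_T(\bar y_i,\bar z_i)$ contains the $T_1$-minimizer if $\phi_s(\bar y_i,\bar z_i) < \phi_t(\bar y_i,\bar z_i)$, the $T_2$-minimizer if the reverse strict inequality holds, and both points in case of equality. Assembling these choices over $i=1,\dots,m_3$ together with $\hat x = P_{[\ell,u]}(\bar x)$ gives exactly the characterization in the proposition. There is no real obstacle here — the argument is essentially a bookkeeping exercise once the separability is exploited — the only minor subtlety is recording that when $\phi_s = \phi_t$ the projection is genuinely set-valued with both candidates being minimizers, which is already captured in the statement.
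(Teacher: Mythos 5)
Your proposal is correct and follows exactly the route the paper takes: the paper's entire justification is the decomposition into the one-dimensional box projections and the two subproblems $(R(i,1))$, $(R(i,2))$, whose unique solutions $(P_{[\sigma_1,\sigma_2]}(\bar y_i),0)$ and $(0,P_{[\tau_1,\tau_2]}(\bar z_i))$ are then compared via their objective values $\tfrac12\phi_s$ and $\tfrac12\phi_t$. Your write-up actually makes explicit the value computation and the tie case, which the paper leaves to the reader.
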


Particularly, it turns out that in order to compute the projections onto the set $D$
under consideration, one basically needs to compute $n+2m_3$ projections onto real intervals.
In the specific setting of complementarity-constrained programming, this already has been
observed in \cite[Section~4]{GuoDeng2021}.

Let us briefly mention that other popular instances of disjunctive programs like vanishing- and
or-constrained optimization problems, 
see e.g.\ \cite{AchtzigerKanzow2008,Mehlitz2020b},
where $T$ is given by
\[
	T:=\set{(s,t) \given st\leq 0,\,t\geq 0}
\quad
\text{or}
\quad
	T:=\set{(s,t) \given \min(s,t)\leq 0},
\]
respectively, can be treated in an analogous fashion.
Furthermore, an analogous procedure applies to more general situations where $T$ is the union of 
	finitely many convex, polyhedral sets.

\subsection{The Sparsity-Constrained Case}

We fix $\W:=\R^n$ and some $\kappa\in\N$ with $1\leq \kappa\leq n-1$.
Consider the set 
\begin{equation*}
    S_\kappa := \set[\big]{ w \in \R^n \given \norm{ w }_0 \leq \kappa }
\end{equation*}
with
 $ \norm{ w }_0 $ being the number of nonzero entries of the vector $ w $.
This set plays a prominent role in sparse optimization and for problems with cardinality constraints.
Since $ S_\kappa $ is nonempty and closed, projections of some vector $w\in\R^n$
(w.r.t.\ the Euclidean norm) onto this set 
exist (but may not be unique), and are known to consist of those vectors $ y\in\R^n $ such that
the nonzero entries of $ y $ are precisely the $ \kappa $ largest (in absolute value) components of 
$ w $ (which may not be unique), see e.g.\ \cite[Proposition~3.6]{BauschkeLukePhanWang2013}.

Hence, within our augmented Lagrangian framework, we may take $ D := S_\kappa $ and then get an 
explicit formula for the solutions of the corresponding subproblems arising within the
spectral gradient method. However, typical implementations of augmented Lagrangian
methods (like \texttt{ALGENCAN}, see \cite{AndreaniBirginMartinezSchuverdt2008}) 
do not penalize box constraints, i.e., they leave the box constraints
explicitly as constraints when solving the corresponding subproblems. 
Hence, let us assume that we have some
lower and upper bounds satisfying $ - \infty \leq \ell_i < u_i \leq \infty $ for all 
$ i = 1, \ldots, n $. We are then forced to compute projections onto the set
\begin{equation}\label{Eq:CsBox}
    D := S_\kappa \cap [\ell,u].
\end{equation}
It turns out that there exists an explicit formula for this projection. Before presenting
the result, let us first assume, for notational simplicity, that 
\begin{equation}\label{Eq:NullAss}
    0 \in [\ell_i,u_i] \qquad \forall i = 1, \ldots, n.
\end{equation}
We mention that this assumption is not restrictive.
Indeed, let us assume that, e.g.,
$0 \not\in [\ell_1, u_1]$.
Then the first component of $w \in D$ cannot be zero,
and this shows
\begin{equation}
	\label{eq:split_sparsity}
	D
	=
	S_\kappa \cap [\ell, u]
	=
	[\ell_1, u_1]
	\times
	\parens*{
		\hat S_{\kappa-1} \cap [\hat \ell, \hat u]
	},
\end{equation}
where
$ \hat S_{\kappa-1} := \set{ w \in \R^{n-1} \given \norm{w}_0 \le \kappa-1}$
and the vectors
$\hat\ell, \hat u \in \R^{n-1}$ are
obtained from $\ell, u$ by dropping the first component,
respectively.
For the computation of
the projection onto $S_\kappa$,
we can now exploit the product structure \eqref{eq:split_sparsity}.
Similarly, we can remove all remaining components $i=2,\ldots,n$ with $0 \not\in [\ell_i, u_i]$ from $D$.
Thus,
we can assume \eqref{Eq:NullAss}
without loss of generality.

We begin with a simple observation.
\begin{lemma}
	\label{lem:structure_of_projection}
	Let $w \in \R^n$ be arbitrary.
	Then, for each $y\in\Pi_D(w)$,
	where $D$ is the set from \eqref{Eq:CsBox}, we have 
	\begin{equation*}
		y_i \in
		\set*{
			0,
			P_{[\ell_i, u_i]}(w_i)
		}
		\qquad \forall i = 1,\ldots, n
		.
	\end{equation*}
\end{lemma}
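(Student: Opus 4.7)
The plan is to prove the claim by a single-coordinate exchange argument, exploiting the fact that $\Pi_D(w)$ consists of global minimizers of $w \mapsto \tfrac12\norm{y-w}^2$ over $D$, combined with the product-like structure of $D$ with respect to individual coordinates.

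Fix $y \in \Pi_D(w)$ and an arbitrary index $i \in \{1,\dots,n\}$. If $y_i = 0$, there is nothing to show, so assume $y_i \neq 0$. The key observation I would establish first is that for every $t \in [\ell_i, u_i]$, the vector $y(t) := (y_1,\dots,y_{i-1},t,y_{i+1},\dots,y_n)$ still belongs to $D$. Membership in $[\ell,u]$ is immediate since $y \in [\ell,u]$ and $t \in [\ell_i,u_i]$. For the sparsity constraint, I would distinguish $t \neq 0$, in which case $\norm{y(t)}_0 = \norm{y}_0 \leq \kappa$ because we replace one nonzero entry by another, and $t = 0$, in which case $\norm{y(t)}_0 = \norm{y}_0 - 1 \leq \kappa$.

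Since $y \in \Pi_D(w)$ and $y(t) \in D$ for every $t \in [\ell_i,u_i]$, and since $\norm{y(t)-w}^2 - \norm{y-w}^2 = (t-w_i)^2 - (y_i-w_i)^2$, the scalar $y_i$ must solve
\begin{equation*}
	\min_{t \in [\ell_i,u_i]} \ (t-w_i)^2.
\end{equation*}
By uniqueness of the projection onto the interval $[\ell_i,u_i]$, this forces $y_i = P_{[\ell_i,u_i]}(w_i)$, concluding the case $y_i \neq 0$. Combined with the trivial case $y_i = 0$, the claim follows.

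There is no real obstacle here; the only subtle point is verifying that replacing the nonzero entry $y_i$ by $0$ keeps $y(t)$ in $S_\kappa$, which is what allows us to drop the ``$\neq 0$'' side constraint and reduce the one-dimensional problem to an unconstrained interval projection. The assumption $0 \in [\ell_i,u_i]$, justified earlier by the product decomposition \eqref{eq:split_sparsity}, is implicitly used to ensure that $t=0$ is a feasible competitor in the interval subproblem, but this is not needed for the argument above since we only compare with $y(t)$ for $t \in [\ell_i, u_i]$ and the optimal $y_i$ itself lies in this interval.
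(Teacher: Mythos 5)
Your proof is correct and uses essentially the same single-coordinate exchange idea as the paper: the paper compares $y$ against the one competitor $q$ obtained by replacing $y_i$ with $P_{[\ell_i,u_i]}(w_i)$ (noting $y_i\neq 0$ keeps $q\in S_\kappa$) and derives a contradiction, whereas you compare against the whole family $y(t)$, $t\in[\ell_i,u_i]$, and conclude that $y_i$ solves the one-dimensional interval projection problem. The two arguments are interchangeable, and your closing remark that \eqref{Eq:NullAss} is not needed here matches the paper, whose proof of this lemma likewise does not invoke it.
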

\begin{proof}
	To the contrary,
	assume that $y_i \ne 0$ and $y_i \ne P_{[\ell_i, u_i]}(w_i)$
	hold for some index $i\in\{1,\ldots,n\}$.
	Define the vector $q \in \R^n$
	by $q_j := y_j$ for $j \ne i$
	and $q_i := P_{[\ell_i, u_i]}(w_i)$.
	Due to $y_i \ne 0$, we have $\norm{q}_0 \le \norm{y}_0\le \kappa$,
	i.e., $q \in S_\kappa$.
	Additionally, $q\in[\ell,u]$ is clear from $y\in[\ell,u]$ and $q_i=P_{[\ell_i,u_i]}(w_i)$.
	Thus, we find $q\in D$.
	Furthermore,
	$\norm{q - w} < \norm{y - w}$
	since
	$q_i = P_{[\ell_i, u_i]}(w_i) \ne y_i$.
	This contradicts the fact that $y$ is a projection
	of $w$ onto $D$.
\end{proof}

Due to the above lemma,
we only have two choices
for the value of the components
associated with projections to $D$ from \eqref{Eq:CsBox}.
Thus, for an arbitrary index set
$I \subset \set{1,\ldots,n}$
and an arbitrary vector $w\in\R^n$,
we define $p^I(w) \in \R^n$ via
\begin{equation*}
	p^I_i(w) :=
	\begin{cases}
		P_{[\ell_i,u_i]}(w_i) & \text{if } i \in I, \\
		0 & \text{otherwise}
	\end{cases}
	\qquad
	\forall i=1,\ldots,n.
\end{equation*}
It remains to characterize those index sets $I$
which ensure that $p^I(w)$ is a projection of $w$ onto $D$.
To this end, we define an auxiliary vector $d(w) \in \R^n$ via
\begin{equation*}
	d_i(w) 
	:=
	w_i^2
	-
	\parens[\big]{ P_{[\ell_i,u_i]} (w_i) - w_i }^2
	\qquad \forall i = 1, \ldots, n.
\end{equation*}
Note that this definition directly yields
\begin{equation}
	\label{eq:distance_via_d}
	\norm{p^I(w) - w}^2
	=
	\norm{w}^2 - \sum_{i \in I} d_i(w)
	.
\end{equation}
We state the following simple observation.
\begin{lemma}
	\label{Lem:Simpledx}
	Fix $ w \in \R^n $ and assume that 	\eqref{Eq:NullAss} is valid. 
	Then the following statements hold:
	\begin{enumerate}[label=(\alph*)]
		\item
			\label{item:Simpledx_1}
			$ d_i(w) \geq 0 $ for all $ i = 1, \ldots, n $,
		\item
			\label{item:Simpledx_2}
			$ d_i(w) = 0 \Longleftrightarrow P_{[\ell_i,u_i]} (w_i) = 0 $.
	\end{enumerate}
\end{lemma}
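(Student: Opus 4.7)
The plan is to exploit the assumption \eqref{Eq:NullAss}, which makes $0$ a feasible competitor in the one-dimensional projection problem $\min_{z\in[\ell_i,u_i]}(z-w_i)^2$ defining $P_{[\ell_i,u_i]}(w_i)$. Both assertions are then essentially consequences of the minimizing property and uniqueness of this projection.

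For part \ref{item:Simpledx_1}, I would simply observe that, since $0\in[\ell_i,u_i]$ by \eqref{Eq:NullAss}, the value $z=0$ is admissible in the above projection problem. Because $P_{[\ell_i,u_i]}(w_i)$ is the (unique) minimizer, we get $(P_{[\ell_i,u_i]}(w_i)-w_i)^2 \le (0-w_i)^2 = w_i^2$, which after rearrangement is exactly $d_i(w)\ge 0$.

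For part \ref{item:Simpledx_2}, the implication ``$\Leftarrow$'' is a direct substitution: if $P_{[\ell_i,u_i]}(w_i)=0$, then $(P_{[\ell_i,u_i]}(w_i)-w_i)^2 = w_i^2$ and $d_i(w)=0$ follows. For the converse ``$\Rightarrow$'', $d_i(w)=0$ means that $0$ attains the same squared distance to $w_i$ as $P_{[\ell_i,u_i]}(w_i)$, so $0$ is itself a minimizer of the projection problem. Since $[\ell_i,u_i]$ is a nonempty, closed, convex subset of $\R$, its Euclidean projection is single-valued, and hence $P_{[\ell_i,u_i]}(w_i)=0$.

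There is no real obstacle here; the only point to be careful about is to invoke uniqueness of the projection onto the convex interval $[\ell_i,u_i]$ in the nontrivial direction of part \ref{item:Simpledx_2}, and to make explicit use of assumption \eqref{Eq:NullAss} so that $0$ is actually a feasible candidate in the projection problem.
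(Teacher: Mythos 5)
Your proof is correct. Part~\ref{item:Simpledx_1} and the direction ``$\Leftarrow$'' of part~\ref{item:Simpledx_2} coincide with the paper's argument: both rest on the observation that \eqref{Eq:NullAss} makes $0$ a feasible competitor in the one-dimensional projection problem. Where you genuinely diverge is in the direction ``$\Rightarrow$'' of part~\ref{item:Simpledx_2}. The paper proceeds algebraically: it factors $0 = d_i(w) = P_{[\ell_i,u_i]}(w_i)\,\bigl(2w_i - P_{[\ell_i,u_i]}(w_i)\bigr)$, which leaves the two cases $P_{[\ell_i,u_i]}(w_i)=0$ and $P_{[\ell_i,u_i]}(w_i)=2w_i$, and disposes of the second by noting that $\{0,2w_i\}\subset[\ell_i,u_i]$ forces $w_i\in[\ell_i,u_i]$ by convexity, hence $w_i = P_{[\ell_i,u_i]}(w_i) = 2w_i$ and so $w_i=0$. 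You instead observe that $d_i(w)=0$ means $\abs{0-w_i}=\abs{P_{[\ell_i,u_i]}(w_i)-w_i}$, so the feasible point $0$ attains the minimal distance and must equal the projection by single-valuedness of the projection onto a nonempty, closed, convex set. Your route is shorter and more conceptual, trading the paper's elementary case analysis for the (standard) uniqueness of the convex projection; both are fully rigorous, and there is no gap in your argument.
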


\begin{proof}
\ref{item:Simpledx_1}
Since $ 0 \in [\ell_i,u_i] $, we obtain
\begin{equation*}
   d_i(w) = ( w_i - 0 )^2 - \parens[\big]{ w_i - P_{[\ell_i,u_i]} (w_i) }^2 \geq 0
\end{equation*}
by definition of the (one-dimensional) projection.\smallskip 

\noindent
\ref{item:Simpledx_2}
If $ P_{[\ell_i,u_i]} (w_i) = 0 $ holds, we immediately obtain $ d_i(w) = 0 $. Conversely,
let $ d_i(w) = 0 $. Then 
\begin{equation*}
   0 = w_i^2 - \parens[\big]{ w_i - P_{[\ell_i,u_i]} (w_i) }^2 = P_{[\ell_i,u_i]} (w_i) \parens[\big]{
   2 w_i - P_{[\ell_i,u_i]} (w_i) }.
\end{equation*}
Hence, we find $ P_{[\ell_i,u_i]} (w_i) = 0 $ or $ P_{[\ell_i,u_i]} (w_i) = 2 w_i $. In the first
case, we are done.
In the second case,
we have $\set{0, 2 w_i} \subset [\ell_i, u_i]$.
By convexity, this gives $w_i \in [\ell_i, u_i]$.
Consequently,
$w_i = P_{[\ell_i, u_i]}(w_i) = 2 w_i$.
This implies
$P_{[\ell_i, u_i]}(w_i) = 0$.
\end{proof}

Observe that the second assertion of the above lemma implies
\begin{equation}
	\label{eq:sparsity_via_d}
	\norm{p^I(w)}_0
	=
	\abs[\big]{\set{i \in I \given P_{[\ell_i, u_i]} (w_i) \ne 0}}
	=
	\abs[\big]{\set{i \in I \given d_i(w) \ne 0}}
	\qquad
	\forall w\in\R^n.
\end{equation}
This can be used to characterize the set of projections onto the set $D$ from
\eqref{Eq:CsBox}.

\begin{proposition}\label{Prop:ProjCsBox2}
	Let $ D $ be the set from \eqref{Eq:CsBox} and assume that
	\eqref{Eq:NullAss} holds.
	Then, for each $w\in\R^n$,
	$ y \in \Pi_D (w) $ holds if and only if there exists an index set 
	$ I \subset \{ 1, \ldots, n \} $
	with $ | I | = \kappa $ such that
	\begin{equation}\label{Eq:Proj4}
		d_i(w) \geq d_j(w) \qquad \forall i \in I, \ \forall j \not\in I
	\end{equation}
	and 
	$y = p^I(w)$ hold.
	
\end{proposition}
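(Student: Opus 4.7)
The plan is to establish the equivalence by treating the two directions separately, with the main tools being \eqref{eq:distance_via_d}, \eqref{eq:sparsity_via_d}, \cref{lem:structure_of_projection}, and \cref{Lem:Simpledx}.

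For the sufficiency direction, given an index set $I$ with $|I|=\kappa$ satisfying \eqref{Eq:Proj4}, I would first verify $p^I(w)\in D$: box membership holds componentwise by construction, while \eqref{eq:sparsity_via_d} yields $\norm{p^I(w)}_0\leq|I|=\kappa$. To show that $p^I(w)$ attains minimal distance to $w$ over all of $D$, I would pick an arbitrary $y\in D$, set $J:=\set{i \given y_i\neq 0}$ (so that $|J|\leq\kappa$), and define $\hat y:=p^J(w)\in D$. Componentwise, $\hat y$ is at least as close to $w$ as $y$: for $i\in J$, $\hat y_i$ minimizes $(\cdot-w_i)^2$ over $[\ell_i,u_i]\ni y_i$, while for $i\notin J$ both vectors agree. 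It therefore suffices to compare $p^I(w)$ with $\hat y$, and, via \eqref{eq:distance_via_d}, this reduces to the scalar inequality $\sum_{i\in J}d_i(w)\leq\sum_{i\in I}d_i(w)$, which follows from $|J|\leq\kappa=|I|$, the nonnegativity of $d_i(w)$ from \cref{Lem:Simpledx}, and the sorting property \eqref{Eq:Proj4}.

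For the necessity direction, let $y\in\Pi_D(w)$ and put $\tilde I:=\set{i \given y_i\neq 0}$, so that \cref{lem:structure_of_projection} gives $y=p^{\tilde I}(w)$ and $|\tilde I|\leq\kappa$. The key ingredient is a two-index swap: if there were $i\in\tilde I$ and $j\notin\tilde I$ with $d_i(w)<d_j(w)$, then modifying $y$ by setting its $i$-th entry to $0$ and its $j$-th entry to $P_{[\ell_j,u_j]}(w_j)$ would produce another element of $D$ (the sparsity does not increase) whose squared distance to $w$, by \eqref{eq:distance_via_d}, is strictly smaller by $d_j(w)-d_i(w)>0$, contradicting optimality of $y$. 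A one-index variant of the same argument shows that if $|\tilde I|<\kappa$, then every $j\notin\tilde I$ must satisfy $d_j(w)=0$, for otherwise enlarging $\tilde I$ by such a $j$ would strictly shrink the distance to $w$.

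It then remains to pad $\tilde I$ to a set $I$ of size exactly $\kappa$. When $|\tilde I|=\kappa$, take $I:=\tilde I$ and the two-index swap step already yields \eqref{Eq:Proj4}. Otherwise, choose any $\kappa-|\tilde I|$ additional indices from the complement of $\tilde I$; by \cref{Lem:Simpledx} combined with the previous paragraph, each such index $j$ satisfies $P_{[\ell_j,u_j]}(w_j)=0$, so $y=p^I(w)$ is preserved, and \eqref{Eq:Proj4} holds trivially because every index outside $\tilde I$ (hence every newly added index as well as every index remaining outside $I$) has $d$-value zero. The main subtlety lies in this final bookkeeping step, where one must simultaneously keep the identity $y=p^I(w)$ intact and preserve the ordering in \eqref{Eq:Proj4} after enlarging the support set; the equivalence $d_j(w)=0\Leftrightarrow P_{[\ell_j,u_j]}(w_j)=0$ from \cref{Lem:Simpledx} is exactly what makes this possible.
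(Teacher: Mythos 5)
Your proof is correct and follows essentially the same route as the paper: both rest on \cref{lem:structure_of_projection}, \cref{Lem:Simpledx}, and the identities \eqref{eq:distance_via_d} and \eqref{eq:sparsity_via_d} to reduce the projection problem to comparing the sums $\sum_{i \in I} d_i(w)$ over admissible index sets. The only cosmetic difference is that the paper channels this comparison through the auxiliary maximization problem \eqref{eq:problem_char_proj_Cs} and the invariance of its solutions under adding or removing indices with $d_i(w)=0$, whereas you carry out the equivalent greedy and exchange arguments directly on the projection problem; the final padding step (enlarging the support by indices with $d_j(w)=0$, which leaves $p^I(w)$ unchanged by \cref{Lem:Simpledx}) is identical.
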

\begin{proof}
	If $y\in\Pi_D(w)$ holds,
	then
	$y = p^J(w)$ is valid for some index set $J$, see \cref{lem:structure_of_projection}.
	Thus, it remains to check that $p^J(w)$ is a projection onto $D$
	if and only if $p^J(w) = p^I(w)$
	holds for some index set $I$
	satisfying $\abs{I} = \kappa$ and \eqref{Eq:Proj4}.

	Note that $p^J(w)$ is a projection
	if and only if
	$J$
	minimizes
	$\norm{p^I(w) - w}$
	over all $I \subset \set{1,\ldots,n}$
	satisfying
	$\norm{p^I(w)}_0 \le \kappa$.
	This can be reformulated via $d(w)$ by using
	\eqref{eq:distance_via_d} and \eqref{eq:sparsity_via_d}.
	In particular,
	$p^J(w)$ is a projection
	if and only if $J$ solves
	\begin{equation}\label{eq:problem_char_proj_Cs}
		\max\limits_I \quad \sum_{i \in I} d_i(w)
		\quad
		\text{s.t.} \quad I \subset \set{1,\ldots,n},
		\quad \abs[\big]{\set{i \in I \given d_i(w) \ne 0}} \le \kappa.
	\end{equation}
	It is clear that index sets $I$ with $\abs{I} = \kappa$
	and \eqref{Eq:Proj4}
	are solutions of this problem.
	This shows the direction $\Longleftarrow$.

	To prove the converse direction $\Longrightarrow$,
	let $p^J(w)$ be a projection.
	Thus, $J$ solves \eqref{eq:problem_char_proj_Cs}.
	We note that the solutions of this problem
	are invariant under addition and removal
	of indices $i$ with
	$d_i(w) = 0$.
	Due to \cref{Lem:Simpledx}~\ref{item:Simpledx_2},
	these operations also do not alter
	the associated $p^I(w)$.
	Thus, for each projection $p^J(w)$,
	we can add or remove indices $i$ with $d_i(w) = 0$,
	to obtain a set $I$ with $p^I(w) = p^J(w)$
	and $\abs{I} = \kappa$.
	It is also clear that \eqref{Eq:Proj4}
	holds for such a choice of $I$.
\end{proof}

Below, we comment on the result of \cref{Prop:ProjCsBox2}.
\begin{remark}\label{rem:projection_onto_CsBox}
	\begin{enumerate}
		\item[(a)] Let $y=p^I(w)$ be a projection of $w\in\R^n$ onto $D$ from \eqref{Eq:CsBox}
			such that \eqref{Eq:NullAss} holds. Observe that $y_i=0$ may also hold for
			some indices $i\in I$.
		\item[(b)] In the unconstrained case $[\ell,u] = \R^n$, we find $d_i(w) = w_i^2$ for each
			$w\in\R^n$ and all $i=1,\ldots,n$.
			Thus, \cref{Prop:ProjCsBox2} recovers the well-known characterization of
			the projection onto the set $S_\kappa$ which can be found in
			\cite[Proposition~3.6]{BauschkeLukePhanWang2013}.
	\end{enumerate}
\end{remark}

We want to close this section with some brief remarks regarding the variational geometry of
$D=S_k\cap[\ell,u]$ from \eqref{Eq:CsBox}. Observing that the sets $S_\kappa$ and $[\ell,u]$ are both polyhedral
in the sense that they can be represented as the union of finitely many polyhedrons, the
normal cone intersection rule
\[
	\mathcal N^\textup{lim}_D(w)
	=
	\mathcal N^\textup{lim}_{S_\kappa\cap[\ell,u]}(w)
	\subset
	\mathcal N^\textup{lim}_{S_\kappa}(w)+\mathcal N^\textup{lim}_{[\ell,u]}(w)
	=
	\mathcal N^\textup{lim}_{S_\kappa}(w)+\mathcal N_{[\ell,u]}(w)
\]
applies for each $w\in D$
by means of \cite[Corollary~4.2]{HenrionJouraniOutrata2002} and
\cite[Proposition~1]{Robinson1981}. While the evaluation of $\mathcal N_{[\ell,u]}(w)$
is standard, a formula for $\mathcal N^\textup{lim}_{S_\kappa}(w)$ can be found in
\cite[Theorem~3.9]{BauschkeLukePhanWang2013}.

\subsection{Low-Rank Approximation}
\subsubsection{General Low-Rank Approximations}
For natural numbers $m,n\in\N$ with $m,n\geq 2$, we fix $\mathbb W:=\R^{m\times n}$.
Equipped with the standard Frobenius inner product, $\mathbb W$ indeed is a Euclidean space.
Now, for fixed $\kappa\in\N$ satisfying $1\leq\kappa\leq\min(m,n)-1$, let us investigate
the set
\[
	D:=\set{W\in\mathbb W \given \rank W\leq\kappa}.
\]
Constraint systems involving rank constraints of type $W\in D$ can be used to model numerous
practically relevant problems in computer vision, machine learning, computer algebra, signal processing,
or model order reduction, see \cite[Section~1.3]{Markovsky2012} for an overview.
Nowadays, one of the most popular applications behind low-rank constraints is the so-called
low-rank matrix completion, particularly, the ``Netflix-problem'', 
see \cite{CandesRecht2009} for details.

Observe that the variational geometry of $D$ has been explored recently in
\cite{HosseiniLukeUschmajew2019}. Particularly, a formula for the limiting normal cone to
this set can be found in \cite[Theorem~3.1]{HosseiniLukeUschmajew2019}.
Using the singular value decomposition of a given matrix $\widetilde W\in\mathbb W$, one can
easily construct an element of $\Pi_{D}(\widetilde W)$ by means of the so-called 
Eckart--Young--Mirsky theorem, see e.g.\ \cite[Theorem~2.23]{Markovsky2012}.

\begin{proposition}\label{prop:ProjLowRank}
	For a given matrix $\widetilde W\in\mathbb W$, 
	let $\widetilde W=U\Sigma V^\top$ be its singular value
	decomposition with orthogonal matrices $U\in\R^{m\times m}$ and $V\in\R^{n\times n}$ as
	well as a diagonal matrix $\Sigma\in\R^{m\times n}$ whose diagonal entries are in
	non-increasing order. 
	Let $\widehat U\in\R^{m\times\kappa}$ and $\widehat V\in\R^{n\times\kappa}$ 
	be the matrices resulting from
	$U$ and $V$ by deleting the last $m-\kappa$ and $n-\kappa$ columns, respectively.
	Furthermore, let $\widehat\Sigma\in\R^{\kappa\times\kappa}$ be the top left $\kappa\times\kappa$ block
	of $\Sigma$. Then we have $\widehat U\widehat\Sigma\widehat{V}^\top\in \Pi_{D}(\widetilde W)$.
\end{proposition}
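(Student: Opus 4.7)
My plan is to invoke the classical Eckart--Young--Mirsky argument, with the goal of showing that the matrix $\widehat U\widehat\Sigma\widehat V^\top$ achieves the minimum of $\|W-\widetilde W\|_F$ over the set $D$. The Euclidean inner product on $\mathbb W$ is the Frobenius inner product, so the projection is characterized by minimizing $\|W-\widetilde W\|_F^2$ subject to $\rank W\le\kappa$.

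First, I would verify directly that the candidate attains the right value. Writing the truncation as $\widehat W:=\widehat U\widehat\Sigma\widehat V^\top$, I extend $\widehat \Sigma$ to an $m\times n$ matrix $\Sigma_\kappa$ by padding with zeros, so that $\widehat W = U\Sigma_\kappa V^\top$. Then by orthogonal invariance of the Frobenius norm,
\[
	\|\widetilde W - \widehat W\|_F^2 = \|U(\Sigma-\Sigma_\kappa)V^\top\|_F^2 = \|\Sigma-\Sigma_\kappa\|_F^2 = \sum_{i=\kappa+1}^{\min(m,n)}\sigma_i^2,
\]
where $\sigma_1\ge\sigma_2\ge\dots$ denote the singular values of $\widetilde W$. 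Since $\rank \widehat W\le\kappa$, this shows $\widehat W\in D$ and gives the upper bound on the projection distance.

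The main obstacle is the lower bound: for an arbitrary $W\in D$, I need $\|\widetilde W - W\|_F^2 \ge \sum_{i=\kappa+1}^{\min(m,n)}\sigma_i^2$. The standard tool is Weyl's inequality for singular values, which states that $\sigma_{i+j-1}(A+B)\le\sigma_i(A)+\sigma_j(B)$ for any matrices $A,B$ of compatible size. Applying this with $A:=W$, $B:=\widetilde W-W$, and $j:=\kappa+1$ (using that $\sigma_{\kappa+1}(W)=0$ since $\rank W\le\kappa$), I obtain
\[
	\sigma_{i+\kappa}(\widetilde W) \le \sigma_i(\widetilde W - W) \qquad \forall i=1,\ldots,\min(m,n)-\kappa.
\]
Squaring and summing over $i$, and using the Frobenius-norm identity $\|M\|_F^2 = \sum_i\sigma_i(M)^2$, yields
\[
	\sum_{i=\kappa+1}^{\min(m,n)}\sigma_i^2 \le \sum_{i=1}^{\min(m,n)-\kappa}\sigma_i(\widetilde W-W)^2 \le \|\widetilde W-W\|_F^2,
\]
which is exactly the required lower bound. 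Combining this with the first computation shows that $\widehat W$ is a minimizer, and hence $\widehat W\in\Pi_D(\widetilde W)$. If a self-contained reference is preferred in the paper, the result can simply be quoted from \cite[Theorem~2.23]{Markovsky2012}, as already announced in the text preceding the proposition.
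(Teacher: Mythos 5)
Your proof is correct and is precisely the classical Eckart--Young--Mirsky argument; the paper itself offers no proof of \cref{prop:ProjLowRank} and simply cites \cite[Theorem~2.23]{Markovsky2012}, so your self-contained derivation (upper bound by direct computation with the truncated $\Sigma_\kappa$, lower bound via Weyl's singular-value inequality) is exactly the standard route the paper defers to. One cosmetic slip: with the convention $\sigma_{i+j-1}(A+B)\le\sigma_i(A)+\sigma_j(B)$ and your assignment $A:=W$, $B:=\widetilde W-W$, the index $j=\kappa+1$ must be attached to $W$ (i.e., you should take $i:=\kappa+1$ for $A$, or equivalently swap the roles of $i$ and $j$) so that the vanishing term is $\sigma_{\kappa+1}(W)=0$; the inequality $\sigma_{i+\kappa}(\widetilde W)\le\sigma_i(\widetilde W-W)$ you then state and use is the correct one, so the argument goes through unchanged.
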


Note that the projection formulas from the previous sections allow 
a very efficient computation of the corresponding projections, which is
in contrast to the projection provided by \cref{prop:ProjLowRank}. Though
the formula given there is conceptually very simple, its realization 
requires to compute the singular value decomposition of the given
matrix.

\subsubsection{Symmetric Low-Rank Approximation}

Given
$n\in\N$ with $n\geq 2$, we consider
the set of symmetric matrices
$\mathbb W:=\R^{n\times n}_{\textup{sym}}$, still
equipped with the Frobenius inner product.
Now, for fixed $\kappa\in\N$ satisfying $1\leq\kappa\leq n$, let us investigate
the set
\[
	D:=\set{W\in\mathbb W \given W \succeq 0,\; \rank W\leq\kappa}.
\]
Above, the constraint $W\succeq 0$ is used to abbreviate that $W$ has to be 
positive semidefinite.
Constraint systems involving rank constraints of type $W\in D$ arise frequently
in several different mathematical models of data science, see \cite{LemonSoYe2016} for an overview, and \cref{Sub:MAXCUT}
for an application.
Note that $\kappa:=n$ covers the setting of pure semidefiniteness constraints.

Exploiting the eigenvalue decomposition of a given matrix $\widetilde W\in \W$, one can
easily construct an element of $\Pi_{D}(\widetilde W)$.

\begin{proposition}\label{prop:SymProjLowRank}
	For a given matrix $\widetilde W\in \W$, 
	we denote by $\widetilde W= \sum_{i = 1}^n \lambda_i v_i v_i^\top$ its 
	(orthonormal) eigenvalue decomposition
	with non-increasingly ordered eigenvalues
	$\lambda_1 \ge \lambda_2 \ge \ldots \ge \lambda_n$
	and associated pairwise orthonormal eigenvectors $v_1,\ldots,v_n$.
	Then we have
	$\widehat W := \sum_{i = 1}^\kappa \max(\lambda_i, 0) v_i v_i^\top \in \Pi_{D}(\widetilde W)$.
\end{proposition}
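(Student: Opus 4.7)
The plan is to reduce the claim to a one-dimensional comparison between eigenvalues via a standard unitarily invariant trace inequality. First I would verify that $\widehat W$ is feasible: it is a symmetric combination of rank-one symmetric matrices $v_iv_i^\top$ with non-negative coefficients $\max(\lambda_i,0)$, hence symmetric and positive semidefinite, and $\rank\widehat W\le \kappa$ by construction. Using the orthonormality of $v_1,\ldots,v_n$, I would then record the explicit identity
\[
    \norm{\widehat W-\widetilde W}_F^2
    =
    \sum_{i=1}^\kappa\parens[\big]{\max(\lambda_i,0)-\lambda_i}^2
    +
    \sum_{i=\kappa+1}^n\lambda_i^2
    =
    \sum_{i=1}^\kappa\min(\lambda_i,0)^2
    +
    \sum_{i=\kappa+1}^n\lambda_i^2,
\]
which will serve as the target lower bound.

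Next, for an arbitrary $W\in D$, I would list its eigenvalues as $\alpha_1\ge\cdots\ge\alpha_n\ge 0$, noting that $\alpha_{\kappa+1}=\cdots=\alpha_n=0$ thanks to the rank constraint and PSD property. Expanding
\[
    \norm{W-\widetilde W}_F^2
    =
    \sum_{i=1}^n\alpha_i^2+\sum_{i=1}^n\lambda_i^2-2\trace(W\widetilde W),
\]
I would invoke von Neumann's trace inequality (or, equivalently, Theobald's/Ky Fan's sharpening for symmetric matrices with like-ordered spectra) to conclude
\[
    \trace(W\widetilde W)\le\sum_{i=1}^n\alpha_i\lambda_i.
\]
Combined with the expansion above, this yields the Hoffman--Wielandt-type bound
\[
    \norm{W-\widetilde W}_F^2\ge\sum_{i=1}^n(\alpha_i-\lambda_i)^2
    =
    \sum_{i=1}^\kappa(\alpha_i-\lambda_i)^2+\sum_{i=\kappa+1}^n\lambda_i^2.
\]

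Finally, I would minimize $\sum_{i=1}^\kappa(\alpha_i-\lambda_i)^2$ over the admissible $(\alpha_1,\ldots,\alpha_\kappa)$. Dropping the monotonicity constraint, each term is separately minimized by $\alpha_i^\ast:=\max(\lambda_i,0)$, giving $\sum_{i=1}^\kappa\min(\lambda_i,0)^2$. Because $\lambda_1\ge\cdots\ge\lambda_\kappa$, the choice $\alpha_i^\ast$ automatically satisfies $\alpha_1^\ast\ge\cdots\ge\alpha_\kappa^\ast\ge 0$, so no feasibility is lost by the relaxation. Combining this lower bound with the identity for $\norm{\widehat W-\widetilde W}_F^2$ computed in the first step proves $\norm{W-\widetilde W}_F\ge\norm{\widehat W-\widetilde W}_F$, which is the desired projection claim.

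The main obstacle is the trace bound $\trace(W\widetilde W)\le\sum_i\alpha_i\lambda_i$: this requires some care because $\widetilde W$ may have negative eigenvalues, so one cannot directly use the Ky~Fan argument written for PSD matrices. I expect to handle this either by directly citing von Neumann's trace inequality for symmetric matrices (which does not require any sign assumption on the spectra once the eigenvalues of both factors are sorted non-increasingly) or by splitting $\widetilde W=\widetilde W_+-\widetilde W_-$ into positive and negative parts and treating the two contributions separately using that $W\succeq 0$. All remaining steps are routine.
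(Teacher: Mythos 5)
Your argument is correct, but it takes a genuinely different route from the paper. The paper first splits $\widetilde W=\widetilde W^+-\widetilde W^-$ into its positive and negative parts, observes that $\innerp{\widetilde W^-}{B}\ge 0$ for every positive semidefinite $B$ so that $\norm{\widetilde W-B}^2\ge\norm{\widetilde W^+-B}^2+\norm{\widetilde W^-}^2$, and then reduces the remaining minimization to the Eckart--Young--Mirsky theorem (\cref{prop:ProjLowRank}) applied to the PSD matrix $\widetilde W^+$, whose SVD coincides with its eigendecomposition; equality at $B=\widehat W$ follows from $\innerp{\widetilde W^-}{\widehat W}=0$. You instead derive a Hoffman--Wielandt-type lower bound $\norm{W-\widetilde W}^2\ge\sum_{i}(\alpha_i-\lambda_i)^2$ from the von Neumann/Theobald trace inequality for like-ordered symmetric spectra (which indeed needs no sign assumption on $\widetilde W$), and then perform a separable minimization over the admissible eigenvalue vectors $\alpha_1\ge\cdots\ge\alpha_\kappa\ge 0=\alpha_{\kappa+1}=\cdots=\alpha_n$, noting that the unconstrained componentwise minimizers $\max(\lambda_i,0)$ are automatically ordered. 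Both routes are sound. The paper's argument is shorter and reuses a result it has already stated, which keeps the exposition self-contained within its own framework; yours avoids Eckart--Young--Mirsky entirely (at the price of importing the trace inequality, a result of comparable depth) and has the side benefit of producing the explicit optimal value $\sum_{i=1}^\kappa\min(\lambda_i,0)^2+\sum_{i=\kappa+1}^n\lambda_i^2$ of the squared distance, which the paper's proof does not exhibit.
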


\begin{proof}
  We define the positive and negative part 
  $\widetilde W^\pm := \sum_{i = 1}^n \max(\pm\lambda_i, 0) v_i v_i^\top$.
  This yields $\widetilde W = \widetilde W^+ - \widetilde W^-$ and
  $\innerp{\widetilde W^+}{\widetilde W^-} = \trace( \widetilde W^+ \widetilde W^-) = 0$.
  Thus, for each positive semidefinite $B \in \W$, we have
  \begin{equation*}
	\norm{ \widetilde W - B }^2
	=
	\norm{ \widetilde W^+ - B }^2 + \norm{ \widetilde W^- }^2 + 2 \innerp{\widetilde W^-}{B}
	\ge
	\norm{ \widetilde W^+ - B }^2 + \norm{ \widetilde W^- }^2
	.
  \end{equation*}
  Since the singular value decomposition of $\widetilde W^+$ coincides with the eigenvalue decomposition,
  the right-hand side is minimized by $B = \widehat W$, see \cref{prop:ProjLowRank} while noting
  that we have $\widehat W=\widetilde W^+$ in case $\kappa=n$.
  Due to $\innerp{\widetilde W^-}{\widehat W} = 0$,
  $B = \widehat W$ also minimizes the left-hand side.
\end{proof}
It is clear that
the computation of the $\kappa$ largest eigenvalues of $\widetilde W\in\mathbb W$
is sufficient to compute an element from the projection
$\Pi_{D}(\widetilde W)$. This can be done particularly efficient 
for small $ \kappa $ (note that $ \kappa = 1 $ holds in our application
from \cref{Sub:MAXCUT}).

\subsection{Extension to Nonsmooth Objectives}

For some lower semicontinuous functional $q\colon\mathbb W\to\R$, we consider the optimization
problem
\begin{equation}\label{eq:non_lipschitz_optimization}
	\min_w \ f(w)+q(w) \quad \text{s.t.} \quad G(w) \in C.
\end{equation}
Particularly, we do not assume that $q$ is continuous.
Exemplary, let us mention the special cases where $q$ is the indicator function
of a closed set, counts the nonzero entries
of the argument vector (in case $\mathbb W:=\R^n$), or encodes the rank of the
argument matrix (in case $\mathbb W:=\R^{m\times n}$). In this regard, 
\eqref{eq:non_lipschitz_optimization} can be used to model real-world applications
from e.g.\ image restoration or signal processing.
Necessary optimality conditions and qualification conditions addressing
\eqref{eq:non_lipschitz_optimization} can be found in \cite{GuoYe2018}.
In \cite{ChenGuoLuYe2017}, the authors suggest to handle \eqref{eq:non_lipschitz_optimization}
numerically with the aid of an augmented Lagrangian method (without safeguarding) based on the (partially)
augmented Lagrangian function \eqref{eq:augmented_Lagrangian} and the subproblems
\[
	\min\limits_w \ \mathcal L_{\rho_k}(w,\lambda^k)+q(w)\quad\text{s.t.}\quad w\in\mathbb W
\]
which are solved with a nonmonotone proximal gradient method inspired by \cite{WrightNowak2009}.
In this regard, the solution approach to \eqref{eq:non_lipschitz_optimization} described in
\cite{ChenGuoLuYe2017} possesses some parallels to our strategy for the numerical solution
of \eqref{Eq:GenP}. The authors in \cite{ChenGuoLuYe2017} were able to prove convergence of
their method to reasonable stationary points of \eqref{eq:non_lipschitz_optimization}
under a variant of the basic qualification condition and RCPLD.
Let us mention that the authors in \cite{GuoYe2018,ChenGuoLuYe2017} only considered standard
inequality and equality constraints, but the theory in these papers can be easily extended
to the more general constraints considered in \eqref{eq:non_lipschitz_optimization} 
doing some nearby adjustments.

We note that \eqref{Eq:GenP} can be interpreted as a special instance of
\eqref{eq:non_lipschitz_optimization} where $q$ plays the role of the indicator function of the
set $D$. Then the nonmonotone proximal gradient method from \cite{ChenGuoLuYe2017} reduces
to the spectral gradient method from \cref{Sec:ProjGrad}. 
However, the authors in \cite{ChenGuoLuYe2017} did not challenge their method with discontinuous
functionals $q$ and, thus, cut away some of the more reasonable applications behind the
model \eqref{Eq:GenP}. Furthermore, we would like to mention that \eqref{eq:non_lipschitz_optimization}
can be reformulated
(by using 
the epigraph 
$\epi q:=\set{(w,\alpha) \given q(w)\leq\alpha}$ of $q$)
as
\begin{equation}\label{eq:non_Lipschitzian_problem_epi_reformulation}
	\min\limits_{w,\alpha} \ f(w)+\alpha\quad\text{s.t.}\quad G(w)\in C,\ (w,\alpha)\in\epi q
\end{equation}
which is a problem of type \eqref{Eq:GenP}.
One can easily check that \eqref{eq:non_lipschitz_optimization} and
\eqref{eq:non_Lipschitzian_problem_epi_reformulation} are equivalent in the sense that 
$\bar w\in\mathbb W$ is a local/global minimizer of \eqref{eq:non_lipschitz_optimization} if and only
if $(\bar w,q(\bar w))$ is a local/global minimizer of
\eqref{eq:non_Lipschitzian_problem_epi_reformulation}.
Problem \eqref{eq:non_Lipschitzian_problem_epi_reformulation} can be handled with
\cref{Alg:ALM} as soon as the computation of projections onto $ D := \epi q$ is possible
in an efficient way.
Our result from \cref{cor:ALM_yields_MSstationary_point} shows that 
\cref{Alg:ALM} applied to \eqref{eq:non_Lipschitzian_problem_epi_reformulation} computes M-stationary
points of \eqref{eq:non_lipschitz_optimization} under 
	AM-regularity (associated with \eqref{eq:non_Lipschitzian_problem_epi_reformulation}
	at $(\bar w,q(\bar w))$),
i.e., we are in
position to find points satisfying
\[
	0\in\nabla f(\bar w)+\partial q(\bar w)+G'(\bar w)^*\mathcal N_C(G(\bar w))
\]
under a very mild condition which enhances \cite[Theorem~3.1]{ChenGuoLuYe2017}.
Here, we used the limiting subdifferential of $q$ given by
\[
	\partial q(w)
	:=
	\set{\xi\in\mathbb W \given (\xi,-1)\in\mathcal N_{\epi q}^\textup{lim}(w,q(w))}.
\]

\section{Numerical Results}\label{Sec:Numerics}

We implemented \cref{Alg:ALM}, based on the underlying subproblem
solver \cref{Alg:NPG}, in MATLAB (R2021b) and tested it on 
three classes of difficult problems which are discussed in 
\cref{Sub:MPCC,Sub:Cardinality,Sub:MAXCUT}. All test runs use the 
following parameters:
\begin{equation*}
   \tau := 2,\, \sigma := 10^{-4},\,  \beta := 10,\,\eta := 0.8,\,
   m := 10, \, \gamma_{\min} := 10^{-10}, \, \gamma_{\max} := 10^{10}.
\end{equation*}
In iteration $k$ of \cref{Alg:ALM},
we terminate \cref{Alg:NPG}
if the inner iterates $w^{j,i}$
satisfy
\begin{equation*}
	\norm*{
		\gamma_{j,i} \parens[\big]{ w^j - w^{j,i} }
		+
		\nabla\varphi(w^{j,i}) - \nabla\varphi(w^j)
	}
	_{\infty} \leq \frac{10^{-4}}{\sqrt{k+1}},
\end{equation*}
where $ \norm{\,\cdot\,}_{\infty} $ stands for the maximum-norm
for both $\W$ equal to $ \mathbb{R}^n $ and equal to 
$ \mathbb{R}^{n \times n}_{\textup{sym}} $ (other Euclidean spaces do not occur
in the subsequent applications),
see \eqref{eq:inner_inner_termination}.
Similarly, we use the infinity norm in the definition \eqref{eq:def_V} of $V_\rho$.
	\cref{Alg:ALM} is terminated as soon as
	\eqref{eq:stopping_ALM}
	is satisfied with $\varepsilon_{\textup{tol}} := 10^{-4}$.
	These two termination criteria ensure
	that the final iterate $w^{k}$
	together with the multiplier $\lambda^k$
	is approximately M-stationary, see \eqref{eq:termination}.

Given an arbitrary (possibly random) starting point $ w^0 $, we note that we first project this point 
onto the set $ D $ and then use this projected point as the true starting point,
so that all iterates $ w^k $ generated by \cref{Alg:ALM} belong to $ D $. The choice of the initial penalty parameter is similar to the rule 
in \cite[p.\ 153]{BirginMartinez2014} and given by
\begin{equation*}
   \rho_0 := P_{[10^{-3}, 10^3]} \parens*{ 10 \frac{\max ( 1, f(w^0) )}{\max
   \parens[\big]{ 1, \tfrac{1}{2} d_C^2( G (w^0)) }} } .
\end{equation*}
In all our examples, the space $\mathbb Y$ is given by $\R^m$ as
in \cref{set:standard_nonlinear_constraints}.
This allows us to choose the safeguarded multiplier
estimate $u^k$ as the projection of the current 
value $ \lambda^k $ onto a given box $ [u_{\min}, u_{\max}] $, where this
box is (in componentwise fashion) 
chosen to be $ [-10^{20},10^{20}] $ for all equality constraints 
and $ [0,10^{20}] $ for all inequality constraints. In this 
way, we basically guarantee that the safeguarded augmented Lagrangian method
from \cref{Alg:ALM} coincides with the classical approach as long 
as bounded multiplier estimates $ \lambda^k $ are generated.

\subsection{MPCC Examples}\label{Sub:MPCC}

The specification of \cref{Alg:ALM} to MPCCs is essentially the 
method discussed in \cite{GuoDeng2021}, where extensive numerical results
(including comparisons with other methods) are presented. We therefore
keep this section short and consider only two particular examples 
in order to illustrate certain aspects of our method.

\begin{example}\label{Ex:Scholtes}
Here, for $w:=(y,z) \in\R^2$, we consider the two-dimensional MPCC given by
\begin{equation*}
   \min_w \tfrac{1}{2} (y-1)^2 + \tfrac{1}{2} (z-1)^2 \quad 
   \text{s.t.} \quad y+z \leq 2,\, y \geq 0,\, z \geq 0, y z = 0,
\end{equation*}
which is essentially the example from \cite{Scholtes2001} 
with an additional (inactive) inequality constraint in order to 
have at least one standard constraint, so that \cref{Alg:ALM}
does not automatically reduce to the spectral gradient method.
The problem possesses two global minimizers at $ (0,1) $ and $ (1,0) $ which are 
M-stationary (in fact, they are even strongly stationary in the 
MPCC-terminology). Moreover, it has a local maximizer at $ (0,0) $
which is a point of attraction for many MPCC solvers since it 
can be shown to be C-stationary, see e.g.\ \cite{HoheiselKanzowSchwartz2013} for 
the corresponding definitions and some convergence results to
C- and M-stationary points.
	Due to \cref{lem:affine_data_implies_AM_regularity}, each
	feasible point of the problem is AM-regular.

In view of our convergence theory, \cref{Alg:ALM} should not 
converge to the origin. To verify this statement numerically, we 
generated $ 1000 $ random starting points (uniformly distributed) from 
the box $ [-10,10]^2 $ and then applied \cref{Alg:ALM}
to the above example. As expected, the method converges for all
$ 1000 $ starting points to one of the two minima. Moreover, we can
even start our method at the origin, and the method still converges
to the point $ (1,0) $ or $ (0,1) $. The limit point itself depends 
on our choice of the projection which is not unique for iterates
$ (y^k,z^k) $ with $ y^k = z^k > 0 $.
\end{example}

The next example is used to illustrate a limitation of our approach
which is based on the fact that we exploit the spectral gradient method
as a subproblem solver. There are examples where this spectral
gradient method reduces the number of iterations even for two-dimensional
problems from more than $ 100000 $ to just a few iterations.
Nevertheless, in the end, the spectral gradient method is a 
projected gradient method, which exploits a different stepsize
selection, but which eventually reduces to a standard projected
gradient method if there are a number of consecutive iterations 
with very small progress, i.e., with almost identical function
values during the last few iterations so that the maximum term in 
the nonmonotone line search is almost identical to the current
function value used in the monotone version. This situation typically happens for 
problems which are ill-conditioned, and we illustrate this observation
by the following example.

\begin{example}\label{Ex:Control}
We consider the optimal control of a discretized obstacle problem
as investigated in \cite[Section~7.4]{HarderMehlitzWachsmuth2021}. 
Using $w:=(x,y,z)$, in our notation, the problem is given by 
\begin{eqnarray*}
   \min_w & & f(w) := \tfrac{1}{2} \norm{ x }^2 - e^\top y + \tfrac{1}{2} \norm{ y }^2 \\
   \text{s.t.} & & x \geq 0, \
   -A y - x + z = 0, \
   y \geq 0, \ z \geq 0, \ y^\top z = 0.
\end{eqnarray*}
Here, $ A $ is a tridiagonal matrix which arises from a discretization
of the negative Laplace operator in one dimension, i.e., $ a_{ii} = 2 $ for all 
$ i $ and $ a_{ij}= -1 $ for all $ i = j \pm 1 $. 
Furthermore, $e$ denotes the all-one vector of appropriate size.
We note that $\bar w:=0$ is the global minimizer as well as an
M-stationary point of this program.
	Again, \cref{lem:affine_data_implies_AM_regularity} shows that
	each feasible point is AM-regular.
Viewing the constraint $ x \geq 0 $
as a box constraint, taking a moderate discretization with 
$ A \in \mathbb{R}^{64 \times 64} $,
and using the all-one vector as a starting point, we obtain
the results from \cref{Tab:Control}.
The number of (outer) iterations is denoted by $ k $, $ j $ is the 
number of inner iterations, $ j_{\textup{cum}} $ the accumulated number of inner
iterations, $ f $-ev.\ provides the number of function evaluations
(note that, due to the stepsize rule, we might have several function
evaluations in a single inner iteration, hence, $ f $-ev.\ is always
an upper bound for $ j_{\textup{cum}} $), $ f(w^k) $ denotes the current
function value,
the 
column titled ``$V_k$'' contains $V_{\rho_{k-1}}(w^k, u^{k-1})$,
$ t_j := 1/\gamma_j$
is the stepsize, and $ \rho_k $ denotes the penalty parameter at
iteration $ k $.

The method terminates after $ 12 $ outer iterations,
which is a reasonable number, especially taking into account that the final penalty
parameter $ \rho_k $ is relatively large, so that several subproblems with different
values of $ \rho_k $ have to be solved in the intermediate steps. On the other 
hand, the number of inner iterations $ j $ (at each outer iteration $ k $) is very large.
In the final step, the method requires more than one million inner iterations. This 
is a typical behavior of gradient-type methods and indicates that the underlying
subproblems are ill-conditioned. This is also reflected by the fact that the 
stepsize $ t_j $ tends to zero.
\end{example}

\begin{table}
\centering
\begin{tabular}{rrrrrrrr}
\hline
$k$ & $j$ & $j_{\textup{cum}}$ & $f$-ev. & \ccell{$f(w^k)$}
& \ccell{$V_k$} & \ccell{$t_j$} & $\rho_k$ \\ \hline
     0 &       0 &       0 &       1 &     32.0000000 &         --- &          --- &        320 \\
   1 &    4889 &    4889 &    8561 &    -30.2322093 &    0.017885 &   0.00019214 &        320 \\
   2 &    2765 &    7654 &   13171 &    -29.5693079 &    0.010772 &   0.00019553 &        320 \\
   3 &    2959 &   10613 &   18148 &    -29.1713687 &    0.008367 &   0.00019264 &        320 \\
   4 &    2734 &   13347 &   23001 &    -28.8787629 &    0.007077 &   0.00020241 &       3200 \\
   5 &   16380 &   29727 &   51233 &    -27.6160751 &    0.003845 &   0.00001961 &       3200 \\
   6 &   16412 &   46139 &   80229 &    -26.8702076 &    0.002675 &   0.00001967 &       3200 \\
   7 &   17708 &   63847 &  111596 &    -26.4929700 &    0.002437 &   0.00003231 &      32000 \\
   8 &  128146 &  191993 &  333580 &    -25.3129057 &    0.002357 &   0.00000196 &     320000 \\
   9 &  596930 &  788923 & 1364773 &    -13.1312431 &    0.000868 &   0.00000021 &     320000 \\
  10 &  756029 & 1544952 & 2686144 &     -5.3024263 &    0.000316 &   0.00000020 &     320000 \\
  11 &  911019 & 2455971 & 4320526 &     -2.0002217 &    0.000115 &   0.00000020 &     320000 \\
  12 & 1084340 & 3540311 & 6367887 &     -0.7376656 &    0.000042 &   0.00000020 &     320000 \\

  \hline
\end{tabular}
\caption{Numerical results for \cref{Ex:Control}.}\label{Tab:Control}
\end{table}

There are two types of difficulties in \cref{Ex:Control}: there 
are challenging constraints (the complementarity constraints), and there is an ill-conditioning.
The difficult constraints are treated by \cref{Alg:ALM} successfully, but 
the ill-conditioning causes some problems when solving the resulting subproblems.
In principle, this difficulty can be circumvented by using another subproblem solver
(like a semismooth Newton method,
see \cite{HarderMehlitzWachsmuth2021}),
but then it is no longer guaranteed that we
obtain M-stationary points at the limit.

Despite the fact that the ill-conditioning causes some difficulties, we stress
again that each iteration of the spectral gradient method is extremely cheap.
Moreover, for all test problems in the subsequent sections, we put an upper
bound of $ 50000 $ inner iterations (as a safeguard), and this upper bound
was not reached in any of these examples.

\subsection{Cardinality-Constrained Problems}\label{Sub:Cardinality}

We first consider an artificial example to illustrate the 
convergence behavior of \cref{Alg:ALM} for cardinality-constrained
problems.

\begin{example}\label{Ex:Cardinality}
Consider the example
\begin{equation*}
   \min_w \ f(w):=\tfrac{1}{2} w^\top Q w + c^\top w \quad \text{s.t.} \quad e^\top w\leq 8,
   \ \norm{ w }_0 \leq 2,
\end{equation*}
where $ Q := E + I $ with $ E \in \mathbb{R}^{5 \times 5} $ being the all
one matrix, $ I \in \mathbb{R}^{5 \times 5} $ the identity matrix, and
$ c := - ( 3, 2, 3, 12, 5 )^\top \in \mathbb{R}^5 $.
	Clearly, by \cref{lem:affine_data_implies_AM_regularity}, all
	feasible points are AM-regular.
This is a minor modification of an example from \cite{BeckEldar2013}, to 
which we added an (inactive) inequality constraint for the same reason
as in \cref{Ex:Scholtes}. Taking into account that there are $ \binom{5}{2} $ possibilities
to choose two possibly nonzero components of $ w $, an elementary calculation
shows that there are exactly $ 10 $ M-stationary points $\bar w^1, \ldots, \bar w^{10} $ 
which are given in \cref{Tab:Cardinality} together with the corresponding
function values. It follows that $\bar w^6 $ is the global minimizer. The points $ \bar w^3, \bar w^8$, and $ \bar w^{10} $
have function values which are not too far away from $ f(\bar w^6) $, whereas all
other M-stationary points have significantly larger function values. We then 
took $ 1000 $ random starting points from the box $ [-10,10]^5 $ (uniformly
distributed) and applied \cref{Alg:ALM} to this example. Surprisingly,
the method converged, for all $ 1000 $ starting points, to the global minimizer
$ \bar w^6 $. We then changed the example by putting an upper bound $ w_4\leq 0 $
to the fourth component. This excludes the four most interesting points
$ \bar w^3, \bar w^6, \bar w^8, $ and $ \bar w^{10} $. Among the remaining points, the three
vectors $ \bar w^4, \bar w^7, $ and $ \bar w^9 $ have identical function values. Running 
our program again using $ 1000 $ randomly generated starting points, we obtain
convergence to $ \bar w^4 $ in $ 589 $ cases, convergence to $ \bar w^7 $ in $ 350 $
situations, whereas in $ 61 $ instances only we observe convergence to the 
non-optimal point $ \bar w^2 $.
\end{example}

\begin{table}[htp]
\centering
\begin{tabular}{lr|lr}
\hline
$\bar w^i$&$f(\bar w^i)$\hspace*{3mm}&\hspace*{4mm}$\bar w^i$&$f(\bar w^i)$\\
\hline \\[-4mm]
$ \bar w^1    := \parens[\big]{ 4/3 , 1/3  , 0   , 0    , 0    }^\top $ & $ - 2.33 $ \hspace*{3mm} & \hspace*{3mm}
$ \bar w^6    := \parens[\big]{ 0   , -8/3 , 0   , 22/3 , 0    }^\top $ & $ -41.33 $\\
$ \bar w^2    := \parens[\big]{ 1   , 0    , 1   , 0    , 0    }^\top $ & $ - 3.00 $ \hspace*{3mm} & \hspace*{3mm}
$ \bar w^7    := \parens[\big]{ 0   , -1/3 , 0   , 0    , 8/3  }^\top $ & $ -6.33 $\\
$ \bar w^3    := \parens[\big]{ -2  , 0    , 0   , 7    , 0    }^\top $ & $ -39.00 $ \hspace*{3mm} & \hspace*{3mm}
$ \bar w^8    := \parens[\big]{ 0   , 0    , -2  , 7    , 0    }^\top $ & $ -39.00 $ \\
$ \bar w^4    := \parens[\big]{ 1/3 , 0    , 0   , 0    , 7/3  }^\top $ & $ -6.33 $ \hspace*{3mm} & \hspace*{3mm}
$ \bar w^9    := \parens[\big]{ 0   , 0    , 1/3 , 0    , 7/3  }^\top $ & $ -6.33 $\\
$ \bar w^5    := \parens[\big]{ 0   , 1/3  , 4/3 , 0    , 0    }^\top $ & $ -2.33 $ \hspace*{3mm} & \hspace*{3mm}
$ \bar w^{10} := \parens[\big]{ 0   , 0    , 0   , 19/3 , -2/3 }^\top $ & $ -36.33 $\\ \hline
\end{tabular}
\caption{M-stationary points and corresponding function values for 
\cref{Ex:Cardinality}.}\label{Tab:Cardinality}
\end{table}

We next consider a class of cardinality-constrained problems of the form
\begin{equation}\label{eq:portfolio-opt}
   \min_w \ \tfrac{1}{2} w^\top Q w \ \text{ s.t. } \ 
   \mu^\top w \geq \varrho, \
   e^\top w = 1, \
   0 \leq w \leq u, \
   \norm{ w }_0 \leq \kappa .
\end{equation}
This is a classical portfolio optimization problem, where $ Q $ and  $ \mu $
denote the covariance matrix and the mean of $ n $ possible assets, respectively, while $\varrho$ is 
some lower bound for the expected return. 
Furthermore, $u$ provides an upper bound for the individual assets within the portfolio.
	The affine structure of the constraints in \eqref{eq:portfolio-opt}
	implies that all feasible points are AM-regular,
	see \cref{lem:affine_data_implies_AM_regularity}.
The data $ Q, \mu, \varrho, u $ were randomly
created by the test problem collection \cite{FrangioniGentile2007},
which is available from the webpage
\url{https://commalab.di.unipi.it/datasets/MV/}.
Here, we used all 30 test instances of dimension $ n := 200 $ and three different
values $ \kappa \in \{ 5, 10, 20 \} $ for each problem. We apply three different
methods:
\begin{enumerate}[label=(\alph*)]
\item\label{item:alm} \cref{Alg:ALM} with starting point $ w^0 := 0 $, 
\item\label{item:almboost} a boosted version of  \cref{Alg:ALM}, and
\item\label{item:cplex} a CPLEX solver \cite{CPLEX} to a reformulation of the 
	portfolio optimization problem as a mixed integer quadratic program.
\end{enumerate}
The CPLEX solver is used to (hopefully) identify the global optimum of the optimization problem
\eqref{eq:portfolio-opt}.
Note that we put a time limit of $ 0.5 $ hours for each test problem. Method \ref{item:alm} applies
our augmented Lagrangian method to \eqref{eq:portfolio-opt} using the set 
$ D := \set{ w \in [0,u]  \given \norm{ w }_0 \leq \kappa } $. 
Projections onto $ D $ are computed using the analytic formula from 
\cref{Prop:ProjCsBox2}. Finally, the boosted
version of \cref{Alg:ALM} is the following: We first delete the cardinality constraint
from the portfolio optimization problem. The resulting quadratic program is then convex and 
can therefore be solved easily.
Afterwards, we apply \cref{Alg:ALM} to a sequence of relaxations
of \eqref{eq:portfolio-opt} in which the cardinality
is recursively decreased by $10$ in each step (starting with $n - 10$)
as long as the desired value $\kappa \in \set{5, 10, 20}$ is not undercut.
For $\kappa=5$, a final call of \cref{Alg:ALM} with the correct cardinality is necessary
since, otherwise, the procedure would terminate with cardinality level $10$.
In each outer iteration,
the projection of the solution of the previous iteration onto the set $D$
is used as a starting point.

\begin{figure}[htp]
\centering
\pgfplotstableread[col sep=tab]{portfolio_alm.txt}\almtable
\pgfplotstableread[col sep=tab]{portfolio_alm_boost.txt}\almboosttable
\begin{tikzpicture}
  \pgfplotstableread[col sep=tab]{portfolio_20.txt}\datatable
  \begin{axis}[
      width=1.0\textwidth,
      height=6cm,
      ybar=0cm,
      bar width=.1cm,
      enlarge x limits={abs=.2cm},
      ylabel={objective function value},
      xtick=data,
      xticklabels from table={\datatable}{Name of problem},
      x tick label style={ rotate=45,scale=.7,anchor=east }
    ]
    \addplot [fill=red!90!black] table [x expr=\coordindex, y={kappa 20}] {\almtable};
    \addplot [fill=yellow!90!black] table [x expr=\coordindex, y={kappa 20}] {\almboosttable};
    \addplot [fill=blue!90!black] table [x expr=\coordindex, y={objective function CPLEX (0.5 h)}] {\datatable};
  \end{axis}
\end{tikzpicture}\\
\begin{tikzpicture}
  \pgfplotstableread[col sep=tab]{portfolio_10.txt}\datatable
  \begin{axis}[
      width=1.0\textwidth,
      height=6cm,
      ybar=0cm,
      bar width=.1cm,
      enlarge x limits={abs=.2cm},
      ylabel={objective function value},
      xtick=data,
      xticklabels from table={\datatable}{Name of problem},
      x tick label style={ rotate=45,scale=.7,anchor=east }
    ]
    \addplot [fill=red!90!black] table [x expr=\coordindex, y={kappa 10}] {\almtable};
    \addplot [fill=yellow!90!black] table [x expr=\coordindex, y={kappa 10}] {\almboosttable};
    \addplot [fill=blue!90!black] table [x expr=\coordindex, y={objective function CPLEX (0.5 h)}] {\datatable};
  \end{axis}
\end{tikzpicture}\\
\begin{tikzpicture}
  \pgfplotstableread[col sep=tab]{portfolio_05.txt}\datatable
  \begin{axis}[
      width=1.0\textwidth,
      height=6cm,
      ybar=0cm,
      bar width=.1cm,
      enlarge x limits={abs=.2cm},
      ylabel={objective function value},
      xtick=data,
      xticklabels from table={\datatable}{Name of problem},
      x tick label style={ rotate=45,scale=.7,anchor=east }
    ]
    \addplot [fill=red!90!black] table [x expr=\coordindex, y={kappa 5}] {\almtable};
    \addplot [fill=yellow!90!black] table [x expr=\coordindex, y={kappa 5}] {\almboosttable};
    \addplot [fill=blue!90!black] table [x expr=\coordindex, y={objective function CPLEX (0.5 h)}] {\datatable};
  \end{axis}
\end{tikzpicture}%
\caption{Optimal function values obtained by
	\cref{Alg:ALM} (red),
	\cref{Alg:ALM} with boosting technique (yellow),
	and
	CPLEX (blue),
applied to the portfolio
optimization problem \eqref{eq:portfolio-opt} with cardinality 
$\kappa = 20$, $\kappa=10$, and $ \kappa = 5 $ (top to bottom).}
\label{Fig:SparseKappa}
\end{figure}
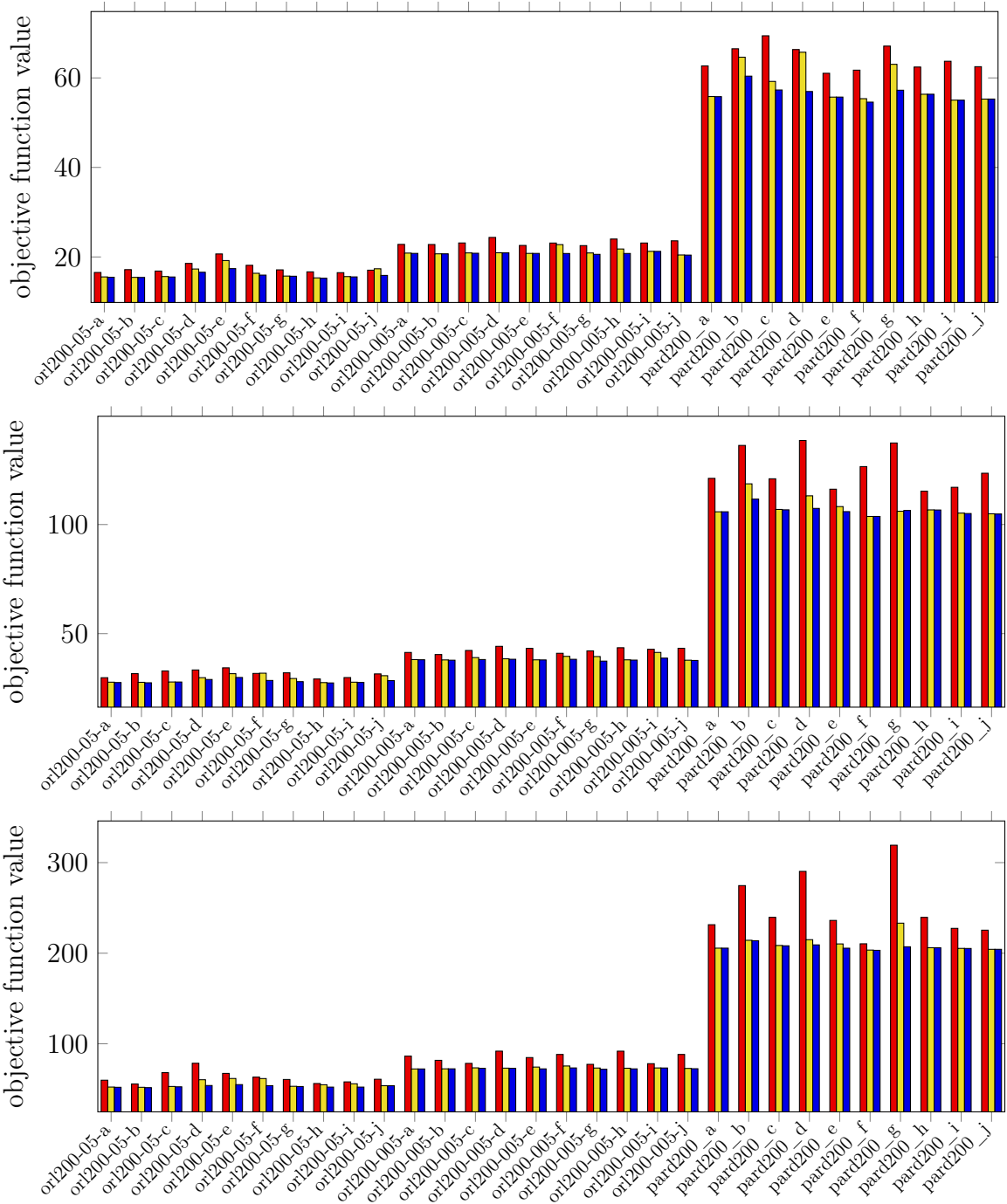

The corresponding results are summarized in 
\cref{Fig:SparseKappa} 
for the three different values $ \kappa \in\{5,10,20\}$. 
This figure compares the optimal function values obtained by the above
three methods for each of the 30 test problems. The optimal function values produced
by CPLEX are used here as a reference value in order to judge the quality of the results
obtained by the other approaches. The main observations are the following:
The optimal function value computed by CPLEX is (not surprisingly) always the best one.
On the other hand, the corresponding values computed by method \ref{item:alm} are usually not too far
away from the optimal ones. Moreover, for all test problems, the boosted version 
\ref{item:almboost} generates even better
function values which are usually very close to the ones computed by CPLEX.
Of course, if $ \kappa $ is taken smaller, the problems are getting more demanding
and are therefore more difficult to solve (in general).
Nevertheless,
also for $ \kappa = 5 $, especially the boosted algorithm still computes rather good
points. In this context, one should also note that our methods always terminate
with a (numerically) feasible point, hence, the final iterate computed by our method can
actually be used as a (good) approximation of the global minimizer.
We also would like to mention that our MATLAB implementation of \cref{Alg:ALM} typically
requires, on an Intel Core i7-8700 processor,
only a CPU time of about 0.1 seconds
for each of the test problems,
whereas the boosted version requires roughly two seconds CPU time in average.

\subsection{MAXCUT Problems}\label{Sub:MAXCUT}

This section considers the famous MAXCUT problem as an application of 
our algorithm to problems with rank constraints. To this end, let
$ G = (V, E) $ be an undirected graph with vertex set $ V = \{ 1, \ldots, n \} $
and edges $ e_{ij} $ between vertices $ i , j \in V $. We assume that we 
have a weighted graph, with $ a_{ij} = a_{ji} $ denoting the nonnegative weights of
the edge $ e_{ij} $. Since we allow zero weights, we can assume without
loss of generality that $ G $ is a complete graph. Now, given a subset
$ S \subset V $ with complement $ S^c $, the \emph{cut} defined by $ S $
is the set $ \delta (S) := \set{ e_{ij}  \given i \in S, j \in S^c } $
of all edges such that one end point belongs to $ S $ and the other one 
to $ S^c $. The corresponding weight of this cut is defined by
\begin{equation*}
   w(S) := \sum_{e_{ij} \in \delta (S)} a_{ij} .
\end{equation*}
The MAXCUT problem looks for the maximum cut, i.e., a cut with maximum
weight. This graph-theoretical problem is known to be NP-hard,
thus very difficult to solve.

Let $ A := ( a_{ij} ) $ and define $ L:= \diag(Ae) -A $. Then
it is well known, see e.g.\ \cite{GoemansWilliamson1995}, that the MAXCUT problem can be reformulated as 
\begin{equation}\label{eq:MAXCUT}
   \max_W \ \tfrac{1}{4} \trace ( L W ) \quad \text{s.t.} \quad
   \diag W = e, \ W \succeq 0, \ \rank W = 1,
\end{equation}
where the variable $W$ is chosen from the space
$\mathbb W:=\mathbb{R}^{n \times n}_{\textup{sym}}$.
Due to the linear constraint $ \diag W = e $,
it follows that this problem is equivalent to
\begin{equation}\label{eq:MAXCUT2}
   \max_W \ \tfrac{1}{4} \trace ( L W ) \quad \text{s.t.} \quad
   \diag W = e, \ W \succeq 0, \ \rank W \leq 1.
\end{equation}
Deleting the difficult rank constraint, one gets the
(convex) relaxation
\begin{equation}\label{eq:MAXCUTRelax}
   \max_W \ \tfrac{1}{4} \trace ( L W ) \quad \text{s.t.} \quad
   \diag W = e, \ W \succeq 0,
\end{equation}
which is a famous test problem for semidefinite programs.

Here, we directly deal with \eqref{eq:MAXCUT2} by taking
$ D: = \set{ W\in\mathbb W  \given W \succeq 0, \ \rank W \leq 1 } $ as the
complicated set. 
	Then GMFCQ holds at all feasible matrices of \eqref{eq:MAXCUT2}, 
	see \cref{sec:Appendix_MAXCUT_reg}.
	Particularly, AM-regularity is valid at all feasible points of
	\eqref{eq:MAXCUT2}.
Projections onto $ D $ can be calculated
via \cref{prop:SymProjLowRank}:
Let $ W \in\mathbb W$ denote an arbitrary symmetric matrix 
with maximum eigenvalue $ \lambda $ and corresponding (normalized)
eigenvector $ v $ (note that $ \lambda$ and $ v $ are not necessarily unique), 
then $ \max ( \lambda, 0 ) vv^\top $ is a projection of $ W $ onto 
$ D $.
In particular, the computation of this projection does not require the
full spectral decomposition.
However,
it is not clear
whether a projection onto the feasible set of \eqref{eq:MAXCUT2} can be computed efficiently.
Consequently,
we penalize the linear constraint $\diag W=e$ by the augmented Lagrangian
approach.

Throughout this section, we take the zero matrix as the starting point.
In order to illustrate the performance of our method, we begin with 
the simple graph from \cref{Fig:Graph}. \Cref{Alg:ALM}
applied to this example using the reformulation \eqref{eq:MAXCUT2}
(more precisely, the corresponding minimization problem) together with
the previous specifications yields the iterations shown in 
\cref{Tab:GraphResults}.
The meaning of the columns is the same as for \cref{Tab:Control}.

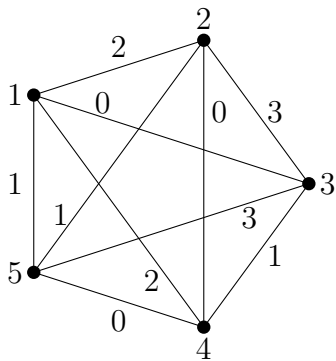
\begin{figure}[ht]
\centering
\begin{tikzpicture}[scale=1.0, transform shape] 
    \coordinate (1) at (144:2);
    \coordinate (2) at (72:2);
    \coordinate (3) at (0:2);
    \coordinate (4) at (288:2);
    \coordinate (5) at (216:2);

   \draw [-] (1) -- (2)
    node[midway, above] {$ 2 $}; 
    \draw [-] (2) -- (3)
    node[midway, right] {$ 3 $}; 
    \draw [-] (3) -- (4)
    node[midway, right] {$ 1 $}; 
    \draw [-] (4) -- (5)
    node[midway, below] {$ 0 $}; 
    \draw [-] (5) -- (1)
    node[midway, left] {$ 1 $}; 
    
    \draw [-] (2) -- (4)
    node[pos=.25, right=-.1em] {$ 0 $}; 
    \draw [-] (1) -- (3)
    node[pos=.25, above=-.2em] {$ 0 $}; 
    \draw [-] (2) -- (5)
    node[pos=.75, left=-.1em] {$ 1 $}; 
    \draw [-] (1) -- (4)
    node[pos=.75, below left=-.3em] {$ 2 $}; 
    \draw [-] (3) -- (5)
    node[pos=.25, below right=-.3em] {$ 3 $};

    \fill[black] (1) circle (2.5pt)
    node[left] {$1$};
    \fill[black] (2) circle (2.5pt)
    node[above] {$2$};
    \fill[black] (3) circle (2.5pt) 
    node[right] {$3$};
    \fill[black] (4) circle (2.5pt)
    node[below] {$4$};
    \fill[black] (5) circle (2.5pt)
    node[left] {$5$};
        
\end{tikzpicture}
\caption{Example of a complete graph for the MAXCUT problem.}\label{Fig:Graph}
\end{figure}

\begin{table}
\centering
\begin{tabular}{rrrrrrrr}
\hline
$k$ & $j$ & $j_{\textup{cum}}$ & $f$-ev. & \ccell{$f(W^k)$}
& \ccell{$V_k$} & \ccell{$t_j$} & $\rho_j$ \\ \hline
   0 &       0 &       0 &       1 &      0.0000000 &         --- &          --- &          4 \\
   1 &      11 &      11 &      16 &     19.6691638 &    0.839210 &   1.25237254 &          4 \\
   2 &       9 &      20 &      27 &     12.0395829 &    0.027365 &   0.63395340 &          4 \\
   3 &       5 &      25 &      34 &     12.0097591 &    0.006361 &   1.25001275 &          4 \\
   4 &       3 &      28 &      38 &     12.0023821 &    0.001553 &   0.62522386 &          4 \\
   5 &       3 &      31 &      42 &     12.0005415 &    0.000382 &   0.62504390 &          4 \\
   6 &       3 &      34 &      46 &     12.0001534 &    0.000097 &   0.62502107 &          4 \\

\hline
\end{tabular}
\caption{Numerical results for MAXCUT associated to the graph
from \cref{Fig:Graph}.}\label{Tab:GraphResults}
\end{table}

Note that the penalty parameter stays constant for
this example. The feasibility measure tends to zero,
and we terminate at iteration $ k = 6 $ since this measure becomes
less than $ 10^{-4} $, i.e., we stop successfully. The associated  
function value is (approximately) $ 12 $ which actually corresponds to the 
maximum cut $ S := \{ 1, 3 \} $ for the graph from \cref{Fig:Graph}, i.e., our method 
is able to solve the MAXCUT problem for this particular instance.
   
We next apply our method to two 
test problem collections that can be downloaded from 
\url{http://biqmac.aau.at/biqmaclib.html},
namely the \texttt{rudy} and the \texttt{ising} collection. The first 
class of problems consists of $ 130 $ instances, whereas the second one includes
$ 48 $ problems. The optimal function value $ f_{\textup{opt}} $ of all these examples is 
known. The details of the corresponding results obtained by our method
are given in
\ifpreprint
  \cref{sec:Appendix_B}. Let us summarize the main observations.
\else
  \cite{JiaKanzowMehlitzWachsmuth2021}. Here, we summarize the main observations.
\fi

All $ 130 + 48 $ test problems were solved successfully by our method
since the standard termination criterion was satisfied after finitely
many iterations, i.e., we stop with an iterate $ W^k $ which is
feasible (within the given tolerance). Hence, the corresponding optimal function
value $ f_{\textup{ALM}} $ is a lower bound for the optimal value $ f_{\textup{opt}} $.
For the sake of completeness, we also solved the (convex) relaxed problem
from \eqref{eq:MAXCUTRelax}, using again our augmented Lagrangian
method with $ D := \set{ W\in\mathbb W \given W \succeq 0 } $.
The corresponding function value is denoted by
$ f_{\textup{SDP}} $. Since the feasible set of \eqref{eq:MAXCUTRelax} is larger
than the one of \eqref{eq:MAXCUT2}, we have the inequalities
$ f_{\textup{ALM}} \leq f_{\textup{opt}} \leq f_{\textup{SDP}} $.
The corresponding details for the solution of the SDP-relaxation are
provided in
\ifpreprint
  \cref{sec:Appendix_B}
\else
  \cite{JiaKanzowMehlitzWachsmuth2021}
\fi
for the
\texttt{rudy} collection.

The bar charts from \cref{Fig:Rudy,Fig:Ising} summarize the results for the 
\texttt{rudy} and \texttt{ising} collections, respectively, in a very condensed way. 
They basically show that the function value $ f_{\textup{ALM}} $ 
obtained by our method is very close to the optimal value $ f_{\textup{opt}} $.
More precisely, the interpretation is as follows: For each test problem,
we take the quotient $ f_{\textup{ALM}} / f_{\textup{opt}} \in [0,1] $. If this quotient is
equal to, say, $ 0.91 $, we count this example as one where we reach 
$ 91 \% $ of the optimal function value. \Cref{Fig:Rudy} then
says that all $ 130 $ test problems were solved with at least $ 88 \% $
of the optimal function value. There are still $ 106 $ test examples
which are solved with a precision of at least $ 95 \% $. Almost one third 
of the test examples, namely $ 43 $ problems, are even solved with 
an accuracy of at least $ 99 \% $. For two examples (pm1d\_80.9,
and pw01\_100.8), we actually get the exact global maximum.

\Cref{Fig:Ising} has a similar meaning for the \texttt{ising} collection:
Though there is no example which is solved exactly, almost one half of the
problems reaches an accuracy of at least $ 99 \% $, and even in the worst
case, we obtain a precision of $ 94 \% $.

\begin{figure}[ht]
\centering
\begin{tikzpicture}
  \pgfplotstableread{rudy.txt}\datatable

  \begin{axis}[
      width=.9\textwidth,
      height=6cm,
      xmin =  87.5,
      xmax = 100.5,
      ybar,
      ymin = 0,
      ymax = 150,
      ylabel={Number of test problems},
      xlabel={Percentage of optimal value function},
      nodes near coords,
    ]
    \addplot [fill=blue!80!black] table [x=Perc, y=Number] {\datatable};
  \end{axis}
\end{tikzpicture}%
\caption{Summary of the results from the \texttt{rudy} collection.}\label{Fig:Rudy}
\end{figure}
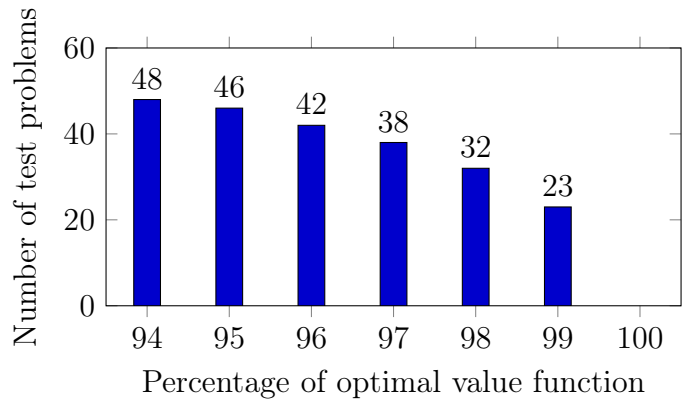

\begin{figure}[ht]
\centering
\begin{tikzpicture}
  \pgfplotstableread{ising.txt}\datatable

  \begin{axis}[
      width=.6\textwidth,
      height=5cm,
      ybar,
      xmin =  93.5,
      xmax = 100.5,
      ymin = 0,
      ymax = 60,
      ylabel={Number of test problems},
      xlabel={Percentage of optimal value function},
      nodes near coords,
    ]
    \addplot [fill=blue!80!black] table [x=Perc, y=Number] {\datatable};
  \end{axis}
\end{tikzpicture}%
\caption{Summary of the results from the \texttt{ising} collection.}\label{Fig:Ising}
\end{figure}
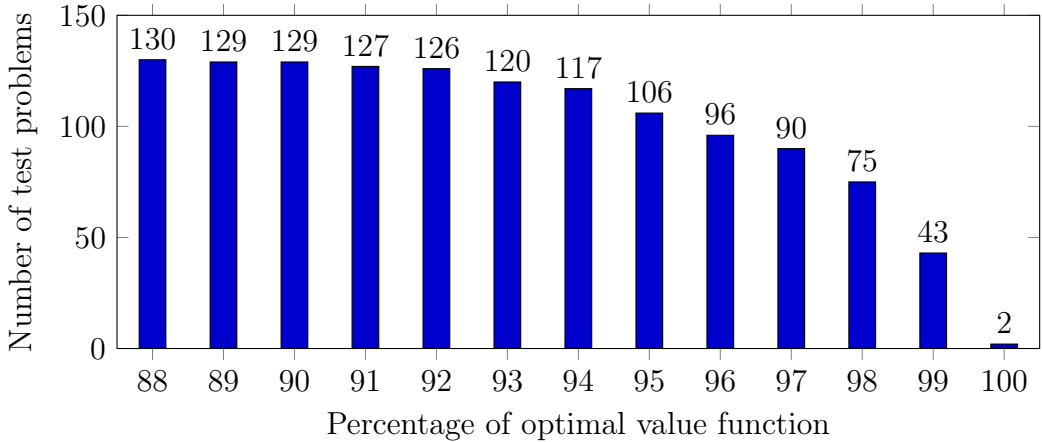

Altogether, this shows that we obtain a very good lower bound for the 
optimal function value. Moreover, since we are always feasible
(in particular, all iterates are matrices of rank one), the final matrix
can be used to create a cut through the given graph, i.e.,
the method provides a constructive way to create cuts which seem to 
be close to the optimal cuts. Note that this is in contrast to the
semidefinite relaxation \eqref{eq:MAXCUTRelax} which gives an upper
bound, but the solution associated with this upper bound is usually not feasible
for the MAXCUT problem since the rank constraint is violated (the
results in
\ifpreprint
  \cref{sec:Appendix_B}
\else
  \cite{JiaKanzowMehlitzWachsmuth2021}
\fi
show that the solutions of the relaxed programs
for the \texttt{rudy} collection are
matrices of rank between $ 4 $ and $ 7 $). In particular, these
matrices can, in general, not be used to compute a cut for the graph
and, therefore, are less constructive than the outputs of our method. Moreover, it 
is interesting to observe that $ f_{\textup{ALM}} $ is usually
much closer to $ f_{\textup{opt}} $ than $ f_{\textup{SDP}} $. In any case, 
both techniques together might be useful tools in a branch-and-bound-type
method for solving MAXCUT problems.

\section{Concluding Remarks}\label{Sec:Final}

In this paper, we demonstrated how M-stationary points of optimization problems 
with structured geometric constraints can be computed with the aid of an augmented Lagrangian
method. The fundamental idea was to keep the complicated constraints out of the augmented 
Lagrangian function and to treat them directly in the associated subproblems which are solved by means
of a nonmonotone projected gradient method. This way, the handling of challenging variational
structures is encapsulated within the efficient computation of projections.
This also puts a natural limit for the applicability.  
In contrast to several other approaches from the
literature, the convergence guarantees for our method, which are valid in the presence of a comparatively
weak asymptotic constraint qualification, remain true if the appearing subproblems
are solved inexactly. Extensive numerical experiments visualized the quantitative qualities
of this approach.

Despite our observations in \cref{Ex:Control}, it might be interesting to think about extensions of
these ideas to infinite-dimensional situations. In \cite{BoergensKanzowMehlitzWachsmuth2020}, an augmented
Lagrangian method for the numerical solution of \eqref{Eq:GenP} in the context of Banach spaces has
been considered where the set $D$ was assumed to be convex, and the subproblems in the resulting algorithm 
are of the same type as in our paper. Furthermore, convergence of the method to KKT points
was shown under validity 
of a problem-tailored version of asymptotic regularity. As soon as $D$ becomes nonconvex, one has
to face some uncomfortable properties of the appearing limiting normal cone which turns out to
be comparatively large since weak-$\ast$-convergence is used for its definition as a set limit in the
dual space, see \cite{HarderWachsmuth2018,MehlitzWachsmuth2018}. That it why the associated M-stationarity
conditions are, in general, too weak in order to yield a reasonable stationarity condition.
However, this issue might be surpassed by investigating the smaller strong limiting normal cone
which is based on strong convergence in the dual space but possesses very limited calculus. 
It remains open whether reasonable asymptotic regularity conditions w.r.t.\
this variational 
object can be formulated. Furthermore, in order to exploit the smallness of the strong limiting
normal cone in the resulting algorithm, one has to make sure (amongst others) that the (primal) 
sequence $\{w^j\}$ possesses strong accumulation points while the (dual) measures 
of inexactness $\{\varepsilon^j\}$ need to be strongly convergent as well. This might be restrictive.
Furthermore, it has to be clarified how the subproblems can be solved to approximate strong
M-stationarity.


\appendix
\section{Proofs}\label{sec:Appendix}

In this appendix, we provide the proofs which were left out in 
\cref{Sec:ProjGrad}.
\vspace{0.3cm}

\noindent
\begin{proof}[Proof of \cref{Prop:InnerLoop}]
	Recall that $w^{j,i}$ is a solution of \eqref{eq:sub_j_i}
	with $\gamma_{j,i} = \tau^{i-1} \gamma_j^0$.
	Since $ w^j \in D $, the optimality of $ w^{j,i}$ for 
\eqref{eq:sub_j_i} yields
\begin{equation}\label{Eq:CSI}
	\innerp{\nabla\varphi(w^j) }{ w^{j,i} - w^j } + \frac{\gamma_{j,i}}{2} \norm[\big]{ w^{j,i} - 
   w^j }^2 \leq 0
	 \qquad\forall i \in \N.
\end{equation}
The Cauchy--Schwarz inequality therefore gives 
\begin{equation*}
	\frac{\gamma_{j,i}}{2} \norm[\big]{ w^{j,i} - w^j } \leq \norm[\big]{ \nabla\varphi(w^j) } 
	\qquad \forall i \in\N.
\end{equation*}
This implies that $ w^{j,i} \to w^j $ for $ i \to \infty $.
Now, we distinguish two cases.
First, we consider that
\begin{equation}\label{Eq:Claim}
	\limsup_{i \to \infty} \gamma_{j,i} \norm[\big]{ w^{j,i} - w^j } > 0 .
\end{equation}
Hence, there exist a sequence $i_l \to \infty$ and a constant $ \rho > 0 $ such that
\begin{equation*}
	\gamma_{j,i_l} \norm[\big]{ w^{j,i_l} - w^j } \geq \rho \qquad \forall l\in\N
	.
\end{equation*}
Consequently, we obtain from \eqref{Eq:CSI} that 
\begin{equation*}
	\frac{\rho}{2} \norm[\big]{ w^{j,i_l} - w^j } \leq \frac{\gamma_{j,i_l}}{2}
	\norm[\big]{ w^{j,i_l} - w^j }^2 \leq - \innerp{\nabla\varphi(w^j)}{w^{j,i_l} - w^j } .
\end{equation*}
Together with a Taylor expansion, we therefore get
\begin{align*}
	\varphi \parens[\big]{ w^{j,i_l}} - \max_{\shift = 0, 1, \ldots, m_j}\varphi(w^{j-\shift})
	& \leq  \varphi \parens[\big]{ w^{j,i_l}} -\varphi(w^j) \\
	& = \innerp{\nabla\varphi(w^j)}{ w^{j,i_l} - w^j } + o \parens[\big]{ \norm[\big]{ w^{j,i_l} - w^j }
   } \\
	 & \leq \sigma \innerp{\nabla\varphi(w^j)}{ w^{j,i_l} - w^j }
\end{align*}
for all $l$ sufficiently large,
i.e., the inner loop terminates.

In the second case, \eqref{Eq:Claim} is not satisfied,
i.e.,
$\gamma_{j,i} \norm[\big]{ w^{j,i} - w^j } \to 0$.
By continuity of $\nabla\varphi$, this yields \eqref{eq:inner_not_termination}.
Together with $w^{j,i} \to w^j$ and by using the continuity of $\nabla\varphi$
as well as \eqref{Eq:stability}
we can pass to the limit $i \to \infty$ in \eqref{eq:M_St_surrogate_ji}
and obtain that $w^j$ is M-stationary.
\end{proof}

\vspace{0.3cm}
\noindent
\begin{proof}[Proof of \cref{Prop:Diffzero}]
Let $ l(j)\in\{j-m_j,\ldots,j\} $ be an index such that
\begin{equation*}
   \varphi(w^{l(j)}) = \max_{\shift= 0, 1, \ldots, m_j}\varphi(w^{j-\shift}) \qquad \forall j \in \mathbb{N}.
\end{equation*}
Then the nonmonotone Armijo rule from
\cref{step:nonmonotone_linesearch} in \cref{Alg:NPG}
can be rewritten as
\begin{equation}\label{Eq:NA1}
  \varphi(w^{j+1}) \leq \varphi(w^{l(j)}) + \sigma \innerp{\nabla\varphi(w^j)}{w^{j+1} - w^j }.
\end{equation}
Since $ w^{j+1} $ solves
\begin{equation}\label{Eq:Subproblem}
	\min_w \ \varphi(w^j) + \innerp{ \nabla \varphi (w^j) }{ w-w^j }  + \frac{\gamma_j}{2} \norm{ w - w^j }^2 \quad 
	\text{s.t.} \quad w \in D,
\end{equation}
we have
\begin{equation*}
   \innerp{\nabla\varphi(w^j) }{ w^{j+1} - w^j } + \frac{\gamma_j}{2} \norm{ w^{j+1} - w^j }^2 \leq 0,
\end{equation*}
i.e.,
\begin{equation*}
   \innerp{\nabla\varphi(w^j)}{ w^{j+1} - w^j } \leq - \frac{\gamma_j}{2} \norm{ w^{j+1} - w^j }^2 .
\end{equation*}
Hence, \eqref{Eq:NA1} implies
\begin{equation}\label{Eq:10}
  \varphi(w^{j+1}) \leq \varphi(w^{l(j)}) - \gamma_j \frac{\sigma}{2} \norm{ w^{j+1} - w^j }^2 .
\end{equation}
We first note that the sequence $ \{\varphi(w^{l(j)} ) \}_j $ is monotonically decreasing.
Using $ m_{j+1} \leq m_j + 1 $, this follows from
\begin{align*}
   \varphi ( w^{l(j+1)} ) & = \max_{\shift=0,1, \ldots, m_{j+1}}\varphi(w^{j+1-\shift}) \\
   & \leq \max_{\shift=0,1, \ldots, m_j +1}\varphi(w^{j+1-\shift}) \\
   & =  \max \parens*{ \max_{\shift = 0, 1, \ldots, m_j}\varphi(w^{j-\shift}),\varphi(w^{j+1}) } \\
   & =  \max \parens[\big]{\varphi(w^{l(j)}),\varphi(w^{j+1}) } \\
   & = \varphi(w^{l(j)}),
\end{align*}
where the last equality follows from \eqref{Eq:10}. Since $ \varphi $ is bounded from 
below, this implies 
\begin{equation}\label{Eq:Limitfk}
   \lim_{j \to \infty}\varphi( w^{l(j)} ) = \varphi^*
\end{equation}
for some finite $\varphi^*\in\R$.
Applying \eqref{Eq:10} with $ j $ replaced by $ l(j)-1 $ and rearranging terms yields
\begin{equation*}
   \varphi(w^{l(j)}) -\varphi(w^{l(l(j)-1)}) \leq - \gamma_{l(j)-1} \frac{\sigma}{2}
   \norm[\big]{ w^{l(j)} - w^{l(j)-1} }^2 \leq 0.
\end{equation*}
Taking the limit $ j \to \infty $ and using \eqref{Eq:Limitfk} therefore implies
\begin{equation*}
   \lim_{j \to \infty} \gamma_{l(j)-1} \norm[\big]{ w^{l(j)} - w^{l(j)-1} }^2 = 0 .
\end{equation*}
Since $ \gamma_j \geq \gamma_{\min} > 0 $ for all $ j \in \mathbb{N} $, we get
\begin{equation}\label{Eq:Ind1}
   \lim_{j \to \infty} d^{l(j)-1} = 0,
\end{equation}
where, for simplicity, we set $ d^j := w^{j+1} - w^j $ for all $ j \in
\mathbb{N} $. Using \eqref{Eq:Limitfk} and \eqref{Eq:Ind1}, we then obtain
\begin{equation}\label{Eq:Ind2}
   \varphi^* = \lim_{j \to \infty}\varphi(w^{l(j)}) =
   \lim_{j \to \infty} \varphi \parens[\big]{ w^{l(j)-1} + d^{l(j)-1} } =
   \lim_{j \to \infty} \varphi (w^{l(j)-1}),
\end{equation}
where the last equality takes into account the uniform continuity of $ \varphi $.
We will now prove, by induction, that 
\begin{equation}\label{Eq:Indj}
   \lim_{j \to \infty} d^{l(j)-\shift} = 0 \quad \text{and} \quad 
   \lim_{j \to \infty}\varphi(w^{l(j)-\shift}) = \varphi^* \quad
   \forall \shift \in\N.
\end{equation}
We already know from \eqref{Eq:Ind1} and \eqref{Eq:Ind2} that \eqref{Eq:Indj} holds for 
$ \shift = 1 $. Suppose that \eqref{Eq:Indj} holds for some $ \shift \geq 1 $. We need to show that
it holds for $ \shift+1 $. Using \eqref{Eq:10} with $ j $ replaced by
$ l(j)-\shift-1 $, we have
\begin{equation*}
  \varphi(w^{l(j)-\shift}) \leq\varphi(w^{l(l(j)-\shift-1)}) - \gamma_{l(j)-\shift-1} \frac{\sigma}{2}
   \norm[\big]{ d^{l(j)-\shift-1} }^2 
\end{equation*}
(here we assume implicitly that $ j $ is large enough such that no negative indices
$ l(j)-\shift-1 $ occur). Rearranging this expression and using $ \gamma_j \geq
\gamma_{\min} $ for all $ j $ yields
\begin{equation*}
   \norm[\big]{ d^{l(j)-\shift-1} }^2 \leq \frac{2}{\gamma_{\min} \sigma}
   \parens[\big]{\varphi(w^{l(l(j)-\shift-1)}) -\varphi(w^{l(j)-\shift}) } .
\end{equation*}
Taking the limit $ j \to \infty $ while using \eqref{Eq:Limitfk} as well as the induction
hypothesis, it follows that 
\begin{equation}\label{Eq:dstar}
   \lim_{j \to \infty} d^{l(j)-\shift-1} = 0,
\end{equation}
which proves the induction step for the first limit in \eqref{Eq:Indj}.
The second limit follows from 
\begin{equation*}
   \lim_{j \to \infty} \varphi\parens[\big]{ w^{l(j)-(\shift+1)} } =
   \lim_{j \to \infty} \varphi\parens[\big]{ w^{l(j)-(\shift+1)} + d^{l(j)-(\shift+1)} } =
   \lim_{j \to \infty} \varphi\parens[\big]{ w^{l(j)-\shift} } =
   \varphi^*,
\end{equation*}
where the first equation follows from \eqref{Eq:dstar} together with
the uniform continuity of $ \varphi $, whereas the final equation is the induction
hypothesis.

In the final step of our proof, we now show that $ \lim_{j \to \infty} d^j = 0 $. 
Suppose that this is not true. Then there is a (suitably shifted, for notational
simplicity) subsequence $ \{ d^{j-m-1} \}_K $
and a constant $ \rho > 0 $ such that
\begin{equation}\label{Eq:Contrad}
   \norm{ d^{j-m-1} } \geq \rho \qquad \forall j \in K.
\end{equation}
Now, for each $ j \in K $, the corresponding index $ l(j) $ is one of the indices
$ j - m, j - m + 1, \ldots, j $. Hence, we can write $ j - m - 1 = l(j) - \shift_j $
for some index $ \shift_j \in \{ 1, 2, \ldots, m+1 \} $. Since there are only finitely
many possible indices $ \shift_j $, we may assume without loss of generality that
$ \shift_j = \shift $ holds for some fixed index $ \shift $. Then \eqref{Eq:Indj} implies
\begin{equation*}
   \lim_{j \to_K \infty} d^{j-m-1} = \lim_{j \to_K \infty} d^{l(j) - \shift} = 0.
\end{equation*}
This contradicts \eqref{Eq:Contrad} and therefore completes the proof.
\end{proof}

\vspace{0.3cm}
\noindent
\begin{proof}[Proof of \cref{Thm:ConvNPG}]
Let $ \bar w $ be an arbitrary accumulation point, and let $ \{ w^j \}_K$ be a 
subsequence such that $w^j\to_K\bar w$. 

We start by showing $\gamma_j \, \parens*{w^{j+1} - w^j} \to_K 0$.
In the case that 
$ \{ \gamma_j \}_K $ is bounded, this follows from 
\cref{Prop:Diffzero}.
In the case that
$ \{ \gamma_j \}_K $ is unbounded, we find a subsequence $K' \subset K$
with $ \gamma_j \to_{K'}
\infty $ and $\gamma_j > \gamma_{\max}$ for all $j \in K'$.
Then $ \hat \gamma_j := \gamma_j / \tau = \tau^{i_j - 1} \gamma_j^0 = \gamma_{j,i_j - 1}$ also converges to infinity.
Due to $\gamma_j > \gamma_{\max}$,
we have $i_j > 0$.
Therefore,
$\hat w^{j+1} := w^{j,i_j - 1}$
(which solves \Qref{j,i_j-1})
violates the nonmonotone Armijo-type condition 
from
\cref{step:nonmonotone_linesearch} in \cref{Alg:NPG}, i.e., we have
\begin{equation}\label{Eq:NonArmijo}
	\varphi \parens[\big]{ \hat w^{j+1} } > \max_{\shift = 0, 1, \ldots, m_j}
	\varphi ( w^{j-\shift} ) + \sigma \innerp{\nabla\varphi(w^j)}{ \hat w^{j+1} - w^j }
\end{equation}
for all $ j \in K' $ sufficiently large. We now argue similar to the proof of \cref{Prop:InnerLoop}
(except that $ j $ is not fixed now). Since $ \hat w^{j+1} $ solves the subproblem
\Qref{j, i_j - 1}, we obtain
\begin{equation}\label{Eq:Optxk}
	\innerp{\nabla\varphi ( w^j ) }{\hat w^{j+1} - w^j } +
	\frac{\hat \gamma_j}{2} \norm[\big]{ \hat w^{j+1} - w^j }^2 \leq 0,
\end{equation}
which implies that
\begin{equation*}
	\frac{\hat \gamma_j}{2} \norm[\big]{ \hat w^{j+1} - w^j } \leq \norm[\big]{
   \nabla \varphi ( w^j ) }.
\end{equation*}
Since $ w^j \to_{K'} \bar w $, this yields $ \hat w^{j+1} - w^j \to_{K'} 0 $. Hence, we also
get $ \hat w^{j+1} \to_{K'} \bar w $.
For each $j\in {K'}$, the mean value theorem yields the existence of $\xi^j$ on the line segment
between $\hat w^{j+1}$ and $w^j$ such that
\begin{align*}
	\varphi \parens[\big]{ \hat w^{j+1} } -\varphi(w^j)
	=
	\innerp{\nabla\varphi(\xi^j)}{ \hat w^{j+1} - w^j }.
\end{align*}
Due to $ \hat w^{j+1}, w^j \to_{K'} \bar w $, we find $\nabla\varphi(\xi^j)-\nabla\varphi(w^j)\to_{K'} 0$.
Using \eqref{Eq:NonArmijo},
we get
\begin{align*}
	\sigma \innerp{\nabla\varphi(w^j) }{ \hat w^{j+1} - w^j }
	&<
	\varphi \parens[\big]{ \hat w^{j+1} }
	-
	\max_{\shift = 0, 1, \ldots, m_j} \varphi ( w^{j-\shift} )
	\\
	&\le
	\varphi \parens[\big]{ \hat w^{j+1} }
	-
	\varphi ( w^{j} )
	\\
	&\le
	\innerp{\nabla\varphi(w^j)}{ \hat w^{j+1} - w^j }
	+
	\norm{ \nabla\varphi(\xi^j) - \nabla\varphi(w^j) } \, \norm{ \hat w^{j+1} - w^j }
	.
\end{align*}
Together with \eqref{Eq:Optxk},
we achieve
\begin{equation*}
	\frac{\hat \gamma_j}{2} \norm[\big]{ \hat w^{j+1} - w^j }^2 \leq
	-\innerp{\nabla\varphi ( w^j ) }{\hat w^{j+1} - w^j }
	 \le
	 \frac{\norm{ \nabla\varphi(\xi^j) - \nabla\varphi(w^j) }}{1-\sigma}
	 \norm{ \hat w^{j+1} - w^j }
	.
\end{equation*}
Thus,
$
\hat \gamma_j \norm[\big]{ \hat w^{j+1} - w^j }
	\to_{K'}
	0$.
	Using the optimality of $\hat w^{j+1}$ and $w^{j+1}$ for
	\Qref{j, i_j - 1} and \Qref{j, i_j}, respectively,
	we find
	\begin{equation*}
		\gamma_j \norm{ w^{j+1} - w^j }
		=
		\tau \hat\gamma_j \norm{ w^{j+1} - w^j }
		\le
		\tau \hat\gamma_j \norm{ \hat w^{j+1} - w^j }
		\to_{K'}
		0.
	\end{equation*}
	Now, one can use a standard subsequence-subsequence argument
	to conclude that
	$ \gamma_j \norm{ w^{j+1} - w^j } \to_K 0$
	holds along the entire subsequence $K$.

It remains to verify M-stationarity of $\bar w$.
Since $ w^{j+1} $ solves the subproblem \eqref{Eq:Subproblem}, the corresponding optimality 
condition yields
\begin{equation*}
   0 \in \nabla\varphi(w^j) + \gamma_j \parens[\big]{ w^{j+1} - w^j } + \mathcal N_D^\textup{lim} (w^{j+1}).
\end{equation*}
Due to \cref{Prop:Diffzero}, we also have $ w^{j+1} \to_K \bar w $.
Hence, taking the limit $ j \to_K \infty $
and exploiting once again the upper semicontinuity of 
the limiting normal cone, we obtain 
\begin{equation*}
   0 \in \nabla\varphi(\bar w) + \mathcal N_D^\textup{lim} (\bar w),
\end{equation*}
i.e., $ \bar w $ is an M-stationary point of \eqref{Eq:P}.
\end{proof}

\section{Constraint Regularity of the MAXCUT Problem}\label{sec:Appendix_MAXCUT_reg}

We show that feasible points of \eqref{eq:MAXCUT2} with
$D:=\set{ W\in\R^{n\times n}_\textup{sym} \given W \succeq 0, \ \rank W \leq 1 }$
generally satisfy GMFCQ.
Note that we use $G\colon\R^{n\times n}_\textup{sym}\to\R^n$ given by
$G(W):=\diag W$, $W\in\R^{n\times n}_\textup{sym}$, and $C:=\{e\}$ here
in order to model the feasible set of \eqref{eq:MAXCUT2} in the form
given in \eqref{Eq:GenP}.

Let us fix a feasible matrix $W\in\R^{n\times n}_\textup{sym}$ of \eqref{eq:MAXCUT2}.
Then we find a vector $u\in\{\pm n^{-1/2}\}^n$ such that $W=nuu^\top$, i.e.,
$W$ possesses the non-zero eigenvalue $n$ and the associated eigenvector $u$.

First, we will show that
\begin{equation}\label{eq:MAXCUT_normal_cone_D}
	\mathcal N_D^\textup{lim}(W)
	\subset
	\{Y\in\R^{n\times n}_\textup{sym}\,|\,Yu=0\}.
\end{equation}
For $Y\in\mathcal N_D^\textup{lim}(W)$, 
we find sequences $\{W^k\},\{Y^k\}\subset\R^{n\times n}_\textup{sym}$ and 
$\{\alpha_k\}\subset[0,\infty)$ such that $W^k\to W$, $Y^k\to Y$, and
$Y^k\in\alpha_k(W^k-\Pi_D(W^k))$ for all $k\in\N$.
For each $k\in\N$, let $W^k = \sum_{i=1}^n\mu_i^ku_i^k(u_i^k)^\top$ be an
(orthonormal) eigenvalue decomposition with non-increasingly ordered
eigenvalues $\mu_1^k\geq\mu_2^k\geq\ldots\geq\mu_n^k$ and associated
pairwise orthonormal eigenvectors $u_1^k,\ldots,u_n^k$.
Due to $W^k\to W$, we find $\mu_1^k\to n$, $\mu_2^k,\ldots,\mu_n^k\to 0$,
and (along a subsequence without relabeling) $u_1^k\to\pm u$.
For sufficiently large $k\in\N$,
this implies $\Pi_D(W^k) = \{\mu_1^ku_1^k(u_1^k)^\top\}$,
see \cite[Proposition~3.4]{Luke2013} and \cref{prop:SymProjLowRank}.
Hence, for any such $k\in\N$, we find $Y^k=\alpha_k\sum_{i=2}^n\mu_i^ku_i^k(u_i^k)^\top$.
Particularly, this gives $Y^ku_1^k=0$ for large enough $k\in\N$,
so that $Yu=0$ follows by taking the limit $k\to\infty$.

Second, suppose that there are a vector $\lambda\in\mathcal N_C(G(W))=\R^n$ 
and a matrix $Y\in\mathcal N_D^\textup{lim}(W)$ such that $\diag\lambda+Y=0$.
In order to prove validity of GMFCQ, $\lambda=0$ has to be shown.
From \eqref{eq:MAXCUT_normal_cone_D}, we find $\lambda\bullet u=-Yu=0$ where $\bullet$ represents
the entrywise product operation. Observing that the components of $u$ 
are all different from zero, $\lambda=0$ follows.

\ifpreprint

\section{Complete Numerical Results for MAXCUT Problems}\label{sec:Appendix_B}

\Cref{Table:MAXCUT,Table:MAXCUT2} present the details of \cref{Alg:ALM} applied to the MAXCUT problem as
described in \cref{Sub:MAXCUT}. The columns have the following meanings: \\[1mm]
\begin{itemize}[leftmargin=4em]
	\item[probl.:] name of the test problem,
	\item[vert.:] number of vertices of the underlying graph, 
   	\item[edges:] number of edges with positive weights of the underlying graph, 
   	\item[$ k $:] number of (outer) iterations of \cref{Alg:ALM}, 
   	\item[$ j $:] number of inner iterations of the spectral gradient method (accumulated), 
    \item[$f$-ev.:] number of function evaluations (accumulated), 
   	\item[$ f_{\textup{ALM}} $:] function value at the final iterate generated by \cref{Alg:ALM}, 
   	\item[feas.:] feasibility measure at the final iterate generated by \cref{Alg:ALM}, 
   	\item[$ \rho $:] penalty parameter at the final iterate generated by \cref{Alg:ALM}, 
   	\item[$f_{\textup{opt}}$:] optimal function value taken from the 
   		report \cite{Wiegele2007}, 
    \item[$f_{\textup{SDP}}$:] function value obtained by the SDP relaxation, and 
  	\item[rk:] rank of the final matrix obtained by solving the SDP relaxation.
\end{itemize}

Recall that the SDP relaxation was also solved by \cref{Alg:ALM} with the semidefiniteness
constraint taken as the complicated constraint, whereas the remaining linear equality constraint
was penalized by the augmented Lagrangian approach.

\begin{scriptsize}
	\def\tablename{\normalsize Table}

\begin{longtable}[c]{rrr|rrrrrr|r|rr}
\caption{Numerical results for MAXCUT problems, \texttt{rudy} collection.}\label{Table:MAXCUT}\\ \hline
     probl. & vert. & edges &  $ k $ & $ j $ & $f$-ev. &  $f_{\textup{ALM}}$\hspace{0.5mm} 
     & feas.\hspace{0.5mm} & $ \rho $ & $f_{\textup{opt}}$ & $f_{\textup{SDP}}$ & rk \\ \hline
\endfirsthead
\caption{Numerical results for MAXCUT problems, \texttt{rudy} collection (continued).} \\ \hline
      probl. & vert. & edges &  $ k $ & $ j $ & $f$-ev. &  $f_{\textup{ALM}}$\hspace{0.5mm} & feas. & $ \rho $ & $f_{\textup{opt}}$ & $f_{\textup{SDP}}$ & rk \\ \hline
\endhead
\hline
\endfoot
\hline
\endlastfoot
  g05\_100.0 &    100 &    2475 &   14 &   310 &   349 & 1420 & 2.8e-5 &   20.0 &     1430 & 1463.52 & 6 \\
  g05\_100.1 &    100 &    2475 &   32 &   396 &   448 & 1420 & 9.4e-5 &    2.0 &     1425 & 1464.05 & 6 \\
  g05\_100.2 &    100 &    2475 &   21 &   339 &   378 & 1431 & 9.9e-5 &    2.0 &     1432 & 1461.65 & 5 \\
  g05\_100.3 &    100 &    2475 &   28 &   500 &   572 & 1411 & 1.0e-4 &    2.0 &     1424 & 1456.68 & 7 \\
  g05\_100.4 &    100 &    2475 &   24 &   475 &   550 & 1430 & 9.7e-5 &    2.0 &     1440 & 1468.80 & 6 \\
  g05\_100.5 &    100 &    2475 &   36 &   409 &   507 & 1415 & 9.8e-5 &    2.0 &     1436 & 1464.66 & 6 \\
  g05\_100.6 &    100 &    2475 &   26 &   476 &   535 & 1429 & 8.8e-5 &    2.0 &     1434 & 1463.17 & 6 \\
  g05\_100.7 &    100 &    2475 &   12 &   292 &   328 & 1428 & 5.5e-5 &   20.0 &     1431 & 1464.27 & 5 \\
  g05\_100.8 &    100 &    2475 &   30 &   325 &   361 & 1425 & 8.3e-5 &    2.0 &     1432 & 1464.75 & 5 \\
  g05\_100.9 &    100 &    2475 &   35 &   344 &   391 & 1415 & 8.2e-5 &    2.0 &     1430 & 1462.39 & 5 \\
   g05\_60.0 &     60 &     885 &   20 &   286 &   322 & 530 & 7.6e-5 &    3.3 &      536 & 550.05 & 5 \\
   g05\_60.1 &     60 &     885 &   18 &   290 &   323 & 524 & 9.5e-5 &    3.3 &      532 & 543.11 & 5 \\
   g05\_60.2 &     60 &     885 &   19 &   283 &   313 & 524 & 6.3e-5 &    3.3 &      529 & 543.18 & 4 \\
   g05\_60.3 &     60 &     885 &   18 &   253 &   302 & 523 & 6.2e-5 &    3.3 &      538 & 548.65 & 4 \\
   g05\_60.4 &     60 &     885 &   18 &   326 &   413 & 526 & 9.6e-5 &    3.3 &      527 & 541.39 & 5 \\
   g05\_60.5 &     60 &     885 &   18 &   219 &   252 & 523 & 6.5e-5 &    3.3 &      533 & 542.59 & 6 \\
   g05\_60.6 &     60 &     885 &   18 &   278 &   311 & 520 & 9.7e-5 &    3.3 &      531 & 544.72 & 5 \\
   g05\_60.7 &     60 &     885 &   20 &   277 &   306 & 530 & 6.8e-5 &    3.3 &      535 & 550.42 & 5 \\
   g05\_60.8 &     60 &     885 &   16 &   338 &   381 & 520 & 6.1e-5 &    3.3 &      530 & 543.98 & 5 \\
   g05\_60.9 &     60 &     885 &   20 &   216 &   245 & 529 & 6.4e-5 &    3.3 &      533 & 549.89 & 5 \\
   g05\_80.0 &     80 &    1580 &   17 &   241 &   263 & 918 & 6.9e-5 &    2.5 &      929 & 950.92 & 5 \\
   g05\_80.1 &     80 &    1580 &   22 &   380 &   465 & 929 & 8.3e-5 &    2.5 &      941 & 957.25 & 4 \\
   g05\_80.2 &     80 &    1580 &   24 &   307 &   341 & 923 & 9.2e-5 &    2.5 &      934 & 955.55 & 5 \\
   g05\_80.3 &     80 &    1580 &   28 &   324 &   362 & 906 & 7.9e-5 &    2.5 &      923 & 947.59 & 5 \\
   g05\_80.4 &     80 &    1580 &   26 &   303 &   346 & 923 & 8.2e-5 &    2.5 &      932 & 955.32 & 5 \\
   g05\_80.5 &     80 &    1580 &   22 &   299 &   343 & 915 & 7.5e-5 &    2.5 &      926 & 947.51 & 6 \\
   g05\_80.6 &     80 &    1580 &   27 &   257 &   293 & 920 & 7.8e-5 &    2.5 &      929 & 948.68 & 5 \\
   g05\_80.7 &     80 &    1580 &   22 &   289 &   321 & 915 & 9.0e-5 &    2.5 &      929 & 949.86 & 5 \\
   g05\_80.8 &     80 &    1580 &   27 &   436 &   504 & 918 & 7.6e-5 &    2.5 &      925 & 946.67 & 5 \\
   g05\_80.9 &     80 &    1580 &   25 &   266 &   296 & 918 & 8.7e-5 &    2.5 &      923 & 943.66 & 6 \\
 pm1d\_100.0 &    100 &    4901 &   13 &   373 &   475 & 338 & 2.5e-5 &   20.0 &      340 & 405.39 & 6 \\
 pm1d\_100.1 &    100 &    4901 &   13 &   517 &   612 & 310 & 3.2e-5 &   20.0 &      324 & 396.09 & 6 \\
 pm1d\_100.2 &    100 &    4901 &   12 &   489 &   571 & 370 & 3.0e-5 &   20.0 &      389 & 453.98 & 7 \\
 pm1d\_100.3 &    100 &    4901 &   15 &   419 &   492 & 396 & 2.8e-5 &   20.0 &      400 & 459.03 & 5 \\
 pm1d\_100.4 &    100 &    4901 &   14 &   450 &   526 & 349 & 6.0e-5 &   20.0 &      363 & 430.32 & 6 \\
 pm1d\_100.5 &    100 &    4901 &   16 &   345 &   383 & 440 & 2.6e-5 &   20.0 &      441 & 510.72 & 5 \\
 pm1d\_100.6 &    100 &    4901 &   14 &   511 &   623 & 360 & 4.0e-5 &   20.0 &      367 & 431.92 & 5 \\
 pm1d\_100.7 &    100 &    4901 &   12 &   246 &   278 & 348 & 3.8e-5 &   20.0 &      361 & 421.52 & 5 \\
 pm1d\_100.8 &    100 &    4901 &   14 &   414 &   494 & 365 & 2.3e-5 &   20.0 &      385 & 438.03 & 6 \\
 pm1d\_100.9 &    100 &    4901 &   13 &   355 &   408 & 404 & 7.8e-5 &   20.0 &      405 & 470.66 & 5 \\
  pm1d\_80.0 &     80 &    3128 &   16 &   395 &   445 & 214 & 2.0e-5 &   25.0 &      227 & 269.97 & 5 \\
  pm1d\_80.1 &     80 &    3128 &   25 &   431 &   497 & 239 & 7.0e-5 &    2.5 &      245 & 292.60 & 5 \\
  pm1d\_80.2 &     80 &    3128 &   25 &   320 &   354 & 270 & 9.4e-5 &    2.5 &      284 & 325.99 & 6 \\
  pm1d\_80.3 &     80 &    3128 &   13 &   405 &   475 & 283 & 4.4e-5 &   25.0 &      291 & 331.31 & 5 \\
  pm1d\_80.4 &     80 &    3128 &   11 &   307 &   355 & 248 & 2.0e-5 &   25.0 &      251 & 290.75 & 5 \\
  pm1d\_80.5 &     80 &    3128 &   28 &   342 &   381 & 233 & 8.9e-5 &    2.5 &      242 & 290.50 & 5 \\
  pm1d\_80.6 &     80 &    3128 &   15 &   376 &   430 & 192 & 5.5e-5 &   25.0 &      205 & 253.56 & 5 \\
  pm1d\_80.7 &     80 &    3128 &   11 &   684 &   895 & 244 & 5.0e-5 &   25.0 &      249 & 292.52 & 4 \\
  pm1d\_80.8 &     80 &    3128 &   27 &   343 &   373 & 288 & 8.5e-5 &    2.5 &      293 & 329.98 & 5 \\
  pm1d\_80.9 &     80 &    3128 &   24 &   310 &   346 & 258 & 9.0e-5 &    2.5 &      258 & 294.31 & 5 \\
 pm1s\_100.0 &    100 &     495 &   16 &   252 &   289 & 118 & 2.0e-6 &   20.0 &      127 & 143.23 & 6 \\
 pm1s\_100.1 &    100 &     495 &   17 &   329 &   376 & 119 & 1.9e-6 &   20.0 &      126 & 144.61 & 5 \\
 pm1s\_100.2 &    100 &     495 &   17 &   281 &   312 & 116 & 9.5e-5 &   20.0 &      125 & 140.23 & 5 \\
 pm1s\_100.3 &    100 &     495 &   17 &   357 &   429 & 100 & 8.8e-5 &    2.0 &      111 & 130.09 & 6 \\
 pm1s\_100.4 &    100 &     495 &   16 &   298 &   369 & 127 & 9.0e-5 &    2.0 &      128 & 145.61 & 6 \\
 pm1s\_100.5 &    100 &     495 &   17 &   335 &   367 & 117 & 9.0e-5 &    2.0 &      128 & 144.66 & 5 \\
 pm1s\_100.6 &    100 &     495 &   16 &   252 &   274 & 118 & 9.0e-5 &   20.0 &      122 & 139.91 & 6 \\
 pm1s\_100.7 &    100 &     495 &   17 &   422 &   491 & 105 & 6.6e-5 &    2.0 &      112 & 126.80 & 5 \\
 pm1s\_100.8 &    100 &     495 &   16 &   288 &   329 & 118 & 7.9e-5 &    2.0 &      120 & 135.86 & 6 \\
 pm1s\_100.9 &    100 &     495 &   17 &   248 &   273 & 125 & 7.0e-7 &   20.0 &      127 & 143.52 & 5 \\
  pm1s\_80.0 &     80 &     316 &   14 &   291 &   342 & 73 & 4.6e-5 &    2.5 &       79 & 90.29 & 4 \\
  pm1s\_80.1 &     80 &     316 &   14 &   255 &   287 & 81 & 6.2e-5 &    2.5 &       85 & 96.18 & 4 \\
  pm1s\_80.2 &     80 &     316 &   14 &   416 &   492 & 80 & 7.6e-5 &    2.5 &       82 & 94.02 & 6 \\
  pm1s\_80.3 &     80 &     316 &   13 &   231 &   265 & 77 & 4.8e-5 &    2.5 &       81 & 92.14 & 5 \\
  pm1s\_80.4 &     80 &     316 &   15 &   298 &   374 & 62 & 5.4e-5 &    2.5 &       70 & 82.06 & 5 \\
  pm1s\_80.5 &     80 &     316 &   13 &   219 &   243 & 86 & 7.9e-5 &    2.5 &       87 & 98.69 & 4 \\
  pm1s\_80.6 &     80 &     316 &   13 &   246 &   297 & 70 & 6.1e-5 &    2.5 &       73 & 85.69 & 5 \\
  pm1s\_80.7 &     80 &     316 &   15 &   346 &   411 & 81 & 4.4e-5 &    2.5 &       83 & 95.45 & 5 \\
  pm1s\_80.8 &     80 &     316 &   13 &   341 &   394 & 79 & 7.5e-5 &    2.5 &       81 & 95.47 & 5 \\
  pm1s\_80.9 &     80 &     316 &   15 &   198 &   230 & 66 & 4.3e-5 &    2.5 &       70 & 82.00 & 5 \\
 pw01\_100.0 &    100 &     495 &   16 &   352 &   421 & 1963 & 9.7e-5 &   20.0 &     2019 & 2125.43 & 5 \\
 pw01\_100.1 &    100 &     495 &   16 &   438 &   538 & 2025 & 5.9e-5 &   20.0 &     2060 & 2161.61 & 5 \\
 pw01\_100.2 &    100 &     495 &   17 &   365 &   432 & 2009 & 3.4e-5 &   20.0 &     2032 & 2135.62 & 5 \\
 pw01\_100.3 &    100 &     495 &   16 &   357 &   443 & 2053 & 6.7e-5 &   20.0 &     2067 & 2167.93 & 5 \\
 pw01\_100.4 &    100 &     495 &   15 &   361 &   432 & 1990 & 4.5e-5 &   20.0 &     2039 & 2116.66 & 5 \\
 pw01\_100.5 &    100 &     495 &   16 &   371 &   420 & 2068 & 8.9e-5 &   20.0 &     2108 & 2195.59 & 5 \\
 pw01\_100.6 &    100 &     495 &   17 &   387 &   442 & 2010 & 5.8e-5 &   20.0 &     2032 & 2135.28 & 5 \\
 pw01\_100.7 &    100 &     495 &   17 &   260 &   304 & 2068 & 6.0e-5 &   20.0 &     2074 & 2182.48 & 5 \\
 pw01\_100.8 &    100 &     495 &   15 &   253 &   293 & 2022 & 7.0e-5 &   20.0 &     2022 & 2102.02 & 6 \\
 pw01\_100.9 &    100 &     495 &   17 &   612 &   748 & 1986 & 5.5e-5 &   20.0 &     2005 & 2114.21 & 5 \\
 pw05\_100.0 &    100 &    2475 &   24 &   334 &   405 & 8118 & 8.9e-5 &   20.0 &     8190 & 8427.71 & 6 \\
 pw05\_100.1 &    100 &    2475 &   12 &   299 &   360 & 7954 & 5.3e-5 &  200.0 &     8045 & 8260.33 & 5 \\
 pw05\_100.2 &    100 &    2475 &   27 &   403 &   519 & 7915 & 7.6e-5 &   20.0 &     8039 & 8271.30 & 6 \\
 pw05\_100.3 &    100 &    2475 &   29 &   446 &   576 & 8002 & 6.7e-5 &   20.0 &     8139 & 8320.32 & 6 \\
 pw05\_100.4 &    100 &    2475 &   23 &   472 &   600 & 8024 & 7.3e-5 &   20.0 &     8125 & 8350.81 & 6 \\
 pw05\_100.5 &    100 &    2475 &   24 &   939 &  1193 & 8149 & 6.7e-5 &   20.0 &     8169 & 8373.47 & 5 \\
 pw05\_100.6 &    100 &    2475 &   20 &   349 &   424 & 8133 & 6.4e-5 &   20.0 &     8217 & 8467.12 & 6 \\
 pw05\_100.7 &    100 &    2475 &   21 &   365 &   445 & 8186 & 7.3e-5 &   20.0 &     8249 & 8487.57 & 5 \\
 pw05\_100.8 &    100 &    2475 &   23 &   371 &   466 & 8051 & 7.8e-5 &   20.0 &     8199 & 8382.98 & 5 \\
 pw05\_100.9 &    100 &    2475 &   22 &   289 &   358 & 8076 & 9.4e-5 &   20.0 &     8099 & 8304.87 & 5 \\
 pw09\_100.0 &    100 &    4455 &   30 &   398 &   504 & 13497 & 7.9e-5 &   20.0 &    13585 & 13805.97 & 6 \\
 pw09\_100.1 &    100 &    4455 &   31 &   491 &   603 & 13357 & 9.8e-5 &   20.0 &    13417 & 13643.51 & 6 \\
 pw09\_100.2 &    100 &    4455 &   38 &   485 &   665 & 13324 & 8.7e-5 &   20.0 &    13461 & 13645.66 & 6 \\
 pw09\_100.3 &    100 &    4455 &   32 &   432 &   543 & 13554 & 9.8e-5 &   20.0 &    13656 & 13842.17 & 6 \\
 pw09\_100.4 &    100 &    4455 &   14 &   350 &   440 & 13480 & 3.1e-5 &  200.0 &    13514 & 13712.77 & 5 \\
 pw09\_100.5 &    100 &    4455 &   34 &   386 &   505 & 13487 & 9.1e-5 &   20.0 &    13574 & 13790.21 & 5 \\
 pw09\_100.6 &    100 &    4455 &   31 &   353 &   448 & 13587 & 8.4e-5 &   20.0 &    13640 & 13835.51 & 5 \\
 pw09\_100.7 &    100 &    4455 &   36 &   385 &   486 & 13451 & 9.3e-5 &   20.0 &    13501 & 13712.94 & 6 \\
 pw09\_100.8 &    100 &    4455 &   35 &   401 &   516 & 13516 & 8.3e-5 &   20.0 &    13593 & 13804.64 & 6 \\
 pw09\_100.9 &    100 &    4455 &   24 &   358 &   438 & 13523 & 8.8e-5 &   20.0 &    13658 & 13864.02 & 5 \\
  w01\_100.0 &    100 &     495 &   17 &   335 &   389 & 624 & 4.2e-5 &   20.0 &      651 & 740.89 & 5 \\
  w01\_100.1 &    100 &     495 &   17 &   467 &   560 & 671 & 5.4e-5 &   20.0 &      719 & 811.83 & 5 \\
  w01\_100.2 &    100 &     495 &   17 &   345 &   406 & 642 & 4.0e-5 &   20.0 &      676 & 781.40 & 6 \\
  w01\_100.3 &    100 &     495 &   18 &   365 &   449 & 793 & 6.8e-5 &   20.0 &      813 & 910.42 & 5 \\
  w01\_100.4 &    100 &     495 &   15 &   222 &   259 & 618 & 5.3e-5 &   20.0 &      668 & 747.01 & 5 \\
  w01\_100.5 &    100 &     495 &   16 &   349 &   405 & 610 & 7.9e-5 &   20.0 &      643 & 737.14 & 5 \\
  w01\_100.6 &    100 &     495 &   17 &   373 &   435 & 627 & 4.2e-5 &   20.0 &      654 & 740.11 & 5 \\
  w01\_100.7 &    100 &     495 &   17 &   451 &   531 & 667 & 5.4e-5 &   20.0 &      725 & 828.69 & 4 \\
  w01\_100.8 &    100 &     495 &   16 &   331 &   395 & 713 & 7.2e-5 &   20.0 &      721 & 792.74 & 4 \\
  w01\_100.9 &    100 &     495 &   16 &   302 &   369 & 721 & 7.9e-5 &   20.0 &      729 & 816.08 & 5 \\
  w05\_100.0 &    100 &    2475 &   23 &   424 &   537 & 1612 & 5.7e-5 &   20.0 &     1646 & 1918.05 & 7 \\
  w05\_100.1 &    100 &    2475 &   23 &   502 &   605 & 1524 & 6.0e-5 &   20.0 &     1606 & 1857.11 & 5 \\
  w05\_100.2 &    100 &    2475 &   22 &   385 &   476 & 1815 & 5.9e-5 &   20.0 &     1902 & 2182.08 & 5 \\
  w05\_100.3 &    100 &    2475 &   21 &   362 &   429 & 1617 & 7.1e-5 &   20.0 &     1627 & 1893.10 & 5 \\
  w05\_100.4 &    100 &    2475 &   22 &   534 &   654 & 1512 & 5.7e-5 &   20.0 &     1546 & 1838.11 & 5 \\
  w05\_100.5 &    100 &    2475 &   22 &   391 &   492 & 1491 & 5.7e-5 &   20.0 &     1581 & 1871.73 & 5 \\
  w05\_100.6 &    100 &    2475 &   22 &   476 &   595 & 1367 & 8.7e-5 &   20.0 &     1479 & 1747.94 & 6 \\
  w05\_100.7 &    100 &    2475 &   22 &   402 &   501 & 1896 & 8.4e-5 &   20.0 &     1987 & 2248.94 & 5 \\
  w05\_100.8 &    100 &    2475 &   21 &   386 &   468 & 1263 & 6.9e-5 &   20.0 &     1311 & 1598.22 & 5 \\
  w05\_100.9 &    100 &    2475 &   20 &   304 &   382 & 1747 & 7.9e-5 &   20.0 &     1752 & 2017.39 & 4 \\
  w09\_100.0 &    100 &    4455 &   27 &   402 &   525 & 2011 & 6.6e-5 &   20.0 &     2121 & 2500.30 & 5 \\
  w09\_100.1 &    100 &    4455 &   25 &   392 &   512 & 2085 & 8.5e-5 &   20.0 &     2096 & 2511.47 & 5 \\
  w09\_100.2 &    100 &    4455 &   26 &   439 &   547 & 2675 & 9.5e-5 &   20.0 &     2738 & 3130.01 & 6 \\
  w09\_100.3 &    100 &    4455 &   13 &   372 &   464 & 1958 & 1.8e-5 &  200.0 &     1990 & 2333.06 & 6 \\
  w09\_100.4 &    100 &    4455 &   26 &   477 &   606 & 1921 & 8.4e-5 &   20.0 &     2033 & 2424.99 & 6 \\
  w09\_100.5 &    100 &    4455 &   25 &   429 &   558 & 2357 & 9.5e-5 &   20.0 &     2433 & 2733.65 & 4 \\
  w09\_100.6 &    100 &    4455 &   26 &   385 &   503 & 2172 & 6.6e-5 &   20.0 &     2220 & 2552.12 & 6 \\
  w09\_100.7 &    100 &    4455 &   28 &   636 &   823 & 2122 & 7.0e-5 &   20.0 &     2252 & 2639.74 & 6 \\
  w09\_100.8 &    100 &    4455 &   15 &   447 &   575 & 1665 & 7.8e-5 &  200.0 &     1843 & 2213.13 & 6 \\
  w09\_100.9 &    100 &    4455 &   24 &   367 &   446 & 2041 & 1.0e-4 &   20.0 &     2043 & 2409.78 & 6 \\

\end{longtable}

\end{scriptsize}

\begin{scriptsize}
	\def\tablename{\normalsize Table}

\begin{longtable}[c]{rrr|rrrrrr|r}
\caption{Numerical results for MAXCUT problems, \texttt{ising} collection.}\label{Table:MAXCUT2}\\ \hline
 probl. & vert. & edges &  $ k $ & $ j $ & $f$-ev. &  $f_{\textup{ALM}}$\hspace{0.5mm} & feas. & $ \rho $ & $f_{\textup{opt}}$ \\ \hline
\endfirsthead
\caption{Numerical results for MAXCUT problems, \texttt{ising} collection 
(continued).} \\
 \hline
 probl. & vert. & edges &  $ k $ & $ j $ & $f$-ev. &  $f_{\textup{ALM}}$\hspace{0.5mm} & feas. & $ \rho $ & $f_{\textup{opt}}$ \\ \hline
\endhead
\hline
\endfoot
\hline
\endlastfoot
ising2.5-100\_5555 &    100 &    4950 &   40 &   687 &  1619 &  2406360.19 & 9.8e-5 & 20000.0 &  2460049 \\
ising2.5-100\_6666 &    100 &    4950 &   17 &   472 &   794 &  1978286.63 & 3.3e-5 & 200000.0 &  2031217 \\
ising2.5-100\_7777 &    100 &    4950 &   36 &   590 &  1370 &  3333168.11 & 8.0e-5 & 20000.0 &  3363230 \\
ising2.5-150\_5555 &    150 &   11175 &   18 &   739 &  1198 &  4315079.10 & 9.2e-5 & 133333.3 &  4363532 \\
ising2.5-150\_6666 &    150 &   11175 &   25 &   959 &  1648 &  4041057.48 & 7.2e-5 & 133333.3 &  4057153 \\
ising2.5-150\_7777 &    150 &   11175 &   31 &   734 &  1389 &  4224911.34 & 5.5e-5 & 133333.3 &  4243269 \\
ising2.5-200\_5555 &    200 &   19900 &   24 &   651 &  1148 &  6267758.47 & 4.5e-5 & 100000.0 &  6294701 \\
ising2.5-200\_6666 &    200 &   19900 &   19 &   630 &  1005 &  6752676.03 & 6.9e-5 & 100000.0 &  6795365 \\
ising2.5-200\_7777 &    200 &   19900 &   19 &   586 &   948 &  5506984.49 & 9.4e-5 & 100000.0 &  5568272 \\
ising2.5-250\_5555 &    250 &   31125 &   22 &   692 &  1154 &  7864741.24 & 8.1e-5 & 80000.0 &  7919449 \\
ising2.5-250\_6666 &    250 &   31125 &   19 &   896 &  1437 &  6852662.07 & 8.8e-5 & 80000.0 &  6925717 \\
ising2.5-250\_7777 &    250 &   31125 &   22 &   624 &  1030 &  6462343.01 & 6.9e-5 & 80000.0 &  6596797 \\
ising2.5-300\_5555 &    300 &   44850 &   22 &  1076 &  1705 &  8523955.32 & 5.9e-5 & 66666.7 &  8579363 \\
ising2.5-300\_6666 &    300 &   44850 &   21 &  1304 &  2075 &  9058514.43 & 6.7e-5 & 66666.7 &  9102033 \\
ising2.5-300\_7777 &    300 &   44850 &   25 &   887 &  1420 &  8168651.28 & 6.0e-5 & 66666.7 &  8323804 \\
ising3.0-100\_5555 &    100 &    4950 &   38 &   711 &  1690 &  2431408.96 & 9.8e-5 & 20000.0 &  2448189 \\
ising3.0-100\_6666 &    100 &    4950 &   35 &   510 &  1238 &  1975552.41 & 9.2e-5 & 20000.0 &  1984099 \\
ising3.0-100\_7777 &    100 &    4950 &   36 &   694 &  1505 &  3327994.14 & 8.0e-5 & 20000.0 &  3335814 \\
ising3.0-150\_5555 &    150 &   11175 &   15 &   456 &   761 &  4246560.88 & 8.2e-5 & 133333.3 &  4279261 \\
ising3.0-150\_6666 &    150 &   11175 &   26 &   647 &  1189 &  3935446.60 & 6.7e-5 & 133333.3 &  3949317 \\
ising3.0-150\_7777 &    150 &   11175 &   31 &   840 &  1582 &  4205069.88 & 5.3e-5 & 133333.3 &  4211158 \\
ising3.0-200\_5555 &    200 &   19900 &   20 &   592 &  1014 &  6209779.63 & 8.1e-5 & 100000.0 &  6215531 \\
ising3.0-200\_6666 &    200 &   19900 &   18 &   599 &   973 &  6697966.13 & 7.0e-5 & 100000.0 &  6756263 \\
ising3.0-200\_7777 &    200 &   19900 &   18 &   721 &  1120 &  5529450.26 & 8.1e-5 & 100000.0 &  5560824 \\
ising3.0-250\_5555 &    250 &   31125 &   22 &   753 &  1247 &  7790361.34 & 5.3e-5 & 80000.0 &  7823791 \\
ising3.0-250\_6666 &    250 &   31125 &   20 &  1003 &  1642 &  6879016.15 & 7.2e-5 & 80000.0 &  6903351 \\
ising3.0-250\_7777 &    250 &   31125 &   20 &  1670 &  2935 &  6287504.49 & 6.3e-5 & 80000.0 &  6418276 \\
ising3.0-300\_5555 &    300 &   44850 &   20 &   942 &  1598 &  8426148.11 & 5.6e-5 & 66666.7 &  8493173 \\
ising3.0-300\_6666 &    300 &   44850 &   23 &   928 &  1515 &  8907934.89 & 4.9e-5 & 66666.7 &  8915110 \\
ising3.0-300\_7777 &    300 &   44850 &   23 &   816 &  1375 &  8169591.67 & 5.4e-5 & 66666.7 &  8242904 \\
 t2g10\_5555 &    100 &     200 &   18 &   411 &   775 &  5778570.15 & 5.4e-5 & 200000.0 &  6049461 \\
 t2g10\_6666 &    100 &     200 &   18 &   463 &   820 &  5503220.56 & 6.9e-5 & 200000.0 &  5757868 \\
 t2g10\_7777 &    100 &     200 &   19 &   492 &   882 &  6261175.46 & 4.1e-5 & 200000.0 &  6509837 \\
 t2g15\_5555 &    225 &     450 &   25 &   601 &  1108 & 14446186.29 & 7.6e-5 & 88888.9 & 15051133 \\
 t2g15\_6666 &    225 &     450 &   27 &   605 &  1191 & 15454604.55 & 6.0e-5 & 88888.9 & 15763716 \\
 t2g15\_7777 &    225 &     450 &   27 &  1351 &  2153 & 14798901.82 & 7.9e-5 & 88888.9 & 15269399 \\
 t2g20\_5555 &    400 &     800 &   22 &   919 &  1408 & 24487271.71 & 4.6e-5 & 500000.0 & 24838942 \\
 t2g20\_6666 &    400 &     800 &   19 &  1131 &  1782 & 28725534.39 & 4.2e-5 & 500000.0 & 29290570 \\
 t2g20\_7777 &    400 &     800 &   19 &  1026 &  1515 & 27294253.39 & 7.4e-5 & 500000.0 & 28349398 \\
  t3g5\_5555 &    125 &     375 &   22 &   496 &   951 & 10843693.83 & 9.4e-5 & 160000.0 & 10933215 \\
  t3g5\_6666 &    125 &     375 &   22 &   724 &  1187 & 11358698.15 & 8.4e-5 & 160000.0 & 11582216 \\
  t3g5\_7777 &    125 &     375 &   25 &   668 &  1225 & 11196295.63 & 9.1e-5 & 160000.0 & 11552046 \\
  t3g6\_5555 &    216 &     648 &   30 &  1092 &  1819 & 17046996.57 & 7.2e-5 & 92592.6 & 17434469 \\
  t3g6\_6666 &    216 &     648 &   26 &   671 &  1252 & 20014468.29 & 8.9e-5 & 92592.6 & 20217380 \\
  t3g6\_7777 &    216 &     648 &   32 &  1403 &  2282 & 18487004.21 & 8.3e-5 & 92592.6 & 19475011 \\
  t3g7\_5555 &    343 &    1029 &   20 &  1201 &  1665 & 26773833.26 & 9.4e-5 & 583090.4 & 28302918 \\
  t3g7\_6666 &    343 &    1029 &   19 &   912 &  1342 & 32934614.99 & 8.0e-5 & 583090.4 & 33611981 \\
  t3g7\_7777 &    343 &    1029 &   21 &   871 &  1332 & 27953083.17 & 1.9e-5 & 583090.4 & 29118445 \\

     \hline
\end{longtable}

\end{scriptsize}
\fi
\end{document}